\mathchardef\za="710B  
\mathchardef\zb="710C  
\mathchardef\zg="710D  
\mathchardef\zd="710E  
\mathchardef\zve="710F 
\mathchardef\zz="7110  
\mathchardef\zh="7111  
\mathchardef\zvy="7112 
\mathchardef\zi="7113  
\mathchardef\zk="7114  
\mathchardef\zl="7115  
\mathchardef\zm="7116  
\mathchardef\zn="7117  
\mathchardef\zx="7118  
\mathchardef\zp="7119  
\mathchardef\zr="711A  
\mathchardef\zs="711B  
\mathchardef\zt="711C  
\mathchardef\zu="711D  
\mathchardef\zvf="711E 
\mathchardef\zq="711F  
\mathchardef\zc="7120  
\mathchardef\zw="7121  
\mathchardef\ze="7122  
\mathchardef\zy="7123  
\mathchardef\zf="7124  
\mathchardef\zvr="7125 
\mathchardef\zvs="7126 
\mathchardef\zf="7127  
\mathchardef\zG="7000  
\mathchardef\zD="7001  
\mathchardef\zY="7002  
\mathchardef\zL="7003  
\mathchardef\zX="7004  
\mathchardef\zP="7005  
\mathchardef\zS="7006  
\mathchardef\zU="7007  
\mathchardef\zF="7008  
\mathchardef\zW="700A  
\newcommand{\be}{\begin{equation}}
\newcommand{\ee}{\end{equation}}
\def\bel#1\eel{\begin{align}#1\end{align}}
\def\bes#1\ees{\begin{align*}#1\end{align*}}
\newcommand{\raa}{\rightarrow}
\newcommand{\bea}{\begin{eqnarray}}
\newcommand{\eea}{\end{eqnarray}}
\newcommand{\beas}{\begin{eqnarray*}}
\newcommand{\eeas}{\end{eqnarray*}}
\newcommand{\Z}{\mathbb{Z}}
\newcommand{\R}{\mathbb{R}}
\newcommand{\C}{\mathbb{C}}
\newcommand{\Pe}{\mathbb{P}}
\newcommand{\pa}{\partial}
\newcommand{\ti}{\times}
\def\cN{{\mathcal N}}
\def\cH{{\mathcal H}}
\def\cO{{\mathcal O}}
\def\cM{{\mathcal M}}
\def\cN{{\mathcal N}}
\def\cF{{\mathcal F}}
\def\cU{{\mathcal U}}
\def\sD{{\mathsf D}}
\def\sJ{{\mathsf J}}
\def\sT{{\mathsf T}}
\def\sV{{\mathsf V}}
\def\sS{{\mathsf S}}
\def\sj{{\mathsf j}}
\def\s{\mathsf{s}}
\def\br{{\mathbf r}}
\newcommand{\bk}[2]{\ensuremath{\langle #1 , #2\rangle}}
\newcommand{\la}{\langle}
\newcommand{\ran}{\rangle}
\def\Hom{\mathsf{Hom}}
\newcommand{\mn}{{\medskip\noindent}}
\newcommand{\no}{{\noindent}}
\newcommand{\xd}{\textnormal{d}}
\newcommand{\half}{{\frac{1}{2}}}
\newcommand{\nn}{\nonumber}
\newcommand{\ot}{\otimes}
\newcommand{\op}{\oplus}
\newcommand{\we}{\wedge}
\def\wt{\widetilde}
\def\ol{\overline}
\def\Sec{\operatorname{Sec}}
\def\Rt{{\R^\ti}}
\def\Lt{{L^\ti}}
\def\dts{\xd_{\sT^*}}
\def\on{\operatorname}
\def\p{\mathbf{p}}
\def\wh{\widehat}
\def\s{\mathfrak{s}}
\newcommand{\Ll}{{\pounds}}
\def\n{\nabla}
\def\sgn{\on{sgn}}
\def\tO{\widetilde{\cO}}
\def\tU{\widetilde{\cU}}
\def\tM{{\wt M}}
\def\tzm{{\wt\zm}}
\def\tP{{\wt P}}
\def\tC{{\wt C}}
\def\Bt{{B^\times}}
\newcommand{\id}{\mathrm{id}}
\newcommand{\nm}[1]{\ensuremath{\Vert #1 \Vert}}
\def\tzh{{\wt\zh}}
\def\tA{{\wt A}}
\def\tzx{{\wt\zx}}
\def\pc{{\phi_C}}
\def\bp{{\bar\phi_C}}
\def\tbp{{\bar\phi_{\tC}}}
\def\GL{\operatorname{GL}}
\def\tip{{\ti^!}}
\def\hX{{X^\s}}
\def\hm{\wh{\zm}}
\def\hg{{\wh{g}_M}}
\def\bl{\big( }
\def\br{\big) }
\def\Bl{\Big( }
\def\Br{\Big) }
\def\gc{{{\bar g}_C}}
\def\fn{{\mathfrak N}}
\newtheorem{theorem}{Theorem}[section]
\newtheorem{proposition}[theorem]{Proposition}
\newtheorem{corollary}[theorem]{Corollary}
\theoremstyle{definition}
\newtheorem{example}[theorem]{Example}
\newtheorem{remark}[theorem]{Remark}
\newtheorem{definition}[theorem]{Definition}
\begin{document}
\title{\Large Sasaki structures\\ on general contact manifolds\footnote{Research funded by the National Science Centre (Poland) within the project WEAVE-UNISONO, No. 2023/05/Y/ST1/00043.}}
\author{
Katarzyna Grabowska\footnote{email:konieczn@fuw.edu.pl }\\
\textit{Faculty of Physics,
                University of Warsaw}
\\ \\
Janusz Grabowski\footnote{email: jagrab@impan.pl} \\
\textit{Institute of Mathematics, Polish Academy of Sciences}
\\ \\
Rouzbeh Mohseni\footnote{email: rouzbeh.iii@gmail.com}\\
\textit{Faculty of Mathematics and Computer Science, University of Łódź}}

\date{}
\maketitle
\begin{abstract}
We extend the notion of a Sasakian structure from the classical setting of a cooriented contact manifold, where it is given by a compatibility between a contact form $\zh$ and a Riemannian metric $g_M$ on $M$, to the case of an arbitrary contact structure understood as a contact distribution. In the cooriented case, this compatibility can be equivalently expressed by the fact that the symplectic form $\omega=\xd(s^2\zh)$ and the cone metric $g(x,s)=\xd s\ot\xd s+s^2g_M(x)$ define a K\"ahler structure on the cone $\cM=M\times\R_+$.

Since general contact structures admit canonical realizations as homogeneous symplectic structures $\zw$ on principal $\Rt$-bundles $P\to M$, it is natural to interpret Sasakian geometry in full generality in terms of suitable homogeneous K\"ahler structures on $P$. We characterize homogeneous K\"ahler structures on symplectizations $(P,\zw)$ associated with arbitrary contact structures on $M$, and show that they canonically determine a two-sheeted covering $\tM$ of $M$ equipped with a contact form. This reduces the problem to the cooriented case and leads to a notion of a \emph{generalized Sasakian structure} on $M$ associated with homogeneous K\"ahler structures on $(P,\zw)$.

Moreover, since products of K\"ahler manifolds are again K\"ahler, our framework naturally yields a concept of a product of Sasakian manifolds. The whole constructions are intrinsic and conceptual, avoiding any \emph{ad hoc} choices.

\bigskip\noindent
{\bf Keywords:}
\emph{contact structure; Sasakian manifold; principal bundle; symplectic form; CR structure; homogeneity, Riemannian metric}\par

\medskip\noindent
{\bf MSC 2020:} 53C25; 53D10; 53D05; 32V05; 53D35	

\end{abstract}
\section{Introduction}
The origins of contact geometry can be traced back to the works of Huygens, Barrow, and Newton. Although the theory of contact transformations had already been developed by Sophus Lie \cite{Lie:1890}, the modern theory of contact structures began with the influential paper of Boothby and Wang \cite{Boothby:1958} in 1958. One year later, Gray \cite{Gray:1959} introduced the notion of an \emph{almost contact structure}. Today, a contact structure is understood as a hyperplane distribution $C\subset \sT M$ on an odd-dimensional manifold $M$ of dimension $2n+1$, locally given as the kernel of a contact form $\zh$, i.e., a $1$-form satisfying $\zh\wedge (\xd\zh)^n\neq 0$.

If such a form exists globally, the contact structure is called \emph{coorientable} or \emph{trivializable}. For general background on contact geometry, we refer to the monograph by Geiges \cite{Geiges:2008}. Allowing nontrivializable contact structures is essential, since many natural and canonical examples, such as contact structures on first jet bundles of line bundles, are not globally coorientable. {Nevertheless, every contact manifold admits a (generally noncanonical) coorientable double covering; see Corollary~\ref{cr1}.}

\mn In \cite{Grabowski:2013}, the second author introduced the notion of a \emph{symplectic $\R^\times$-bundle} $(P,\zw)$ over a manifold $M$, where $\Rt$ denotes the multiplicative group of nonzero real numbers, and proved that symplectic $\R^\times$-bundles are equivalent to contact manifolds (see also \cite{Arnold:1989}). The symplectic $\R^\times$-bundles associated with a contact manifold $(M,C)$ are all mutually isomorphic and are called the \emph{symplectic covers} of $(M,C)$. We briefly recall these constructions in Section~\ref{Sec3}. Later, in a series of papers \cite{Grabowska:2022,Grabowska:2023,Grabowska:2023a}, this viewpoint was applied to geometric mechanics, based on the principle that contact geometry should be understood primarily in terms of the homogeneous symplectic form $\zw$, rather than as an odd-dimensional analogue of symplectic geometry. Here, homogeneity means that $h_s^*(\zw)=s\zw$, where $s\mapsto h_s$ denotes the principal $\Rt$-action on $P$. This perspective considerably clarifies and unifies the relation between contact and symplectic geometry.

\mn Symplectic and complex structures can both be described as integrable $G$-structures. In contrast, the situation for contact structures is more subtle, and the notion of integrability must be modified accordingly; see, e.g., \cite{Tortorella:2020}. In 1961, Sasaki \cite{Sasaki:1961} observed that an almost contact metric structure on a manifold $M$ naturally induces an almost complex structure on the product manifold $M\times\R$, equivalently on the cone $\cM=M\times\R_+$. This made it possible to study contact geometry through complex differential geometry on the cone and led to the notion now known as a \emph{Sasakian structure}. A particularly transparent description starts from a metric contact structure $(M,\zh,g_M)$ and considers the cone $\cM=M\times\R_+$ endowed with the metric $g=\xd s\ot \xd s+s^2g_M$
and the symplectic form $\zw=\xd(s^2\zh)$. Here, $s$ denotes the standard coordinate on the $\R_+$ factor.
Then $(M,\zh,g_M)$ is Sasakian if and only if $(\cM,\zw,g)$ is a K\"ahler manifold.

\mn The above construction uses a symplectic form $\zw=\xd(s^2\zh)$ which is $2$-homogeneous, $h_s^*(\zw)=s^2\zw$, together with a compatible metric $g$ which is likewise $2$-homogeneous with respect to the principal $\R_+$-bundle structure on $\cM$. However, from the viewpoint of symplectic covers, the natural homogeneous object is rather the $1$-homogeneous symplectic form $\zw'=\xd(s\zh)$. Correspondingly, one is led to consider the $1$-homogeneous tensor
$g'=(\xd s\ot\xd s)/s+sg_M$.
This shift is essential because the canonical symplectic form on the symplectic cover $\zt:P\to M$ of a contact manifold is always $1$-homogeneous. On the other hand, genuinely $1$-homogeneous symmetric tensors on $\Rt$-bundles cannot be Riemannian, since multiplication by $-1\in\Rt$ changes their sign. To overcome this difficulty, we introduce the notion of a \emph{(almost) K\"ahlerian $\Rt$-bundle}, where the symplectic form is homogeneous while the metric is only positively homogeneous.

\mn We show that such homogeneous K\"ahlerian structures are naturally associated with a Riemannian metric $g_M$ on the contact manifold together with a principal connection on the bundle $P$. Moreover, the metric $g_M$ canonically determines a \emph{calibration} on $P$, i.e., a positive homogeneous function. This calibration provides an ``almost trivialization'' of the contact structure, since it locally determines a contact form $\pm\zh$. The K\"ahler structure on $P$ then induces a compatibility condition between $\pm\zh$ and $g_M$, encoded by a CR structure on $(M,\pm\zh)$, which allows one to express $g_M$ as a generalized Levi metric. In this way, we obtain a natural extension of Sasakian geometry to arbitrary contact manifolds, called \emph{generalized Sasaki manifolds}. If the terminology is concerned, let us note that the name `generalized Sasakian structures' has different meaning in a paper by Ken’ich Sekiya \cite{Sekiya:2015}, which is related to the concept of generalized complex geometry by Hitchin, and that we use the term `homogeneous K\"ahler manifold' in relation to homogeneity structures, not to K\"ahler structures on homogeneous spaces--quotients of Lie groups.

\mn Summarizing, in this paper we introduce the notion of a \emph{generalized Sasakian manifold}, derived from the principle of homogeneity of a K\"ahler structure on the symplectic cover of a contact manifold $(M,C)$. This fits into the broader philosophy of describing geometric structures via homogeneity, in the same spirit as viewing contact structures as homogeneous symplectic structures and Jacobi structures as homogeneous Poisson structures. The resulting category of generalized Sasakian manifolds has good functorial and product properties, leading in particular to a natural notion of a \emph{Sasakian product}. We illustrate the theory with examples ranging from classical Sasakian manifolds to a genuinely nontrivial example related to the M\"obius band.

\section{Line and $\R^\ti$-principal bundles}\label{Sec2}
Vector bundles with one-dimensional fibers will be called \emph{line bundles}. If $\zt: L\rightarrow M$ is a line bundle over a manifold $M$, then the submanifold $P=L^\times\subset L$ of nonzero vectors, where $L^\times=L\setminus 0_M$, is canonically a principal bundle over $M$ with the structure group $(\R^\times, \cdot)=\on{GL}(1;\R)$, i.e., the group of invertible reals with multiplication. The $\R^\ti$-action on $L^\times$ comes from the multiplication by reals in $L$. If $(x^i)$ are local coordinates in $U\subset M$ and $(x^i,t)$ are affine coordinates in $\zt^{-1}(U)\subset L$, associated with a local trivialization $\zt^{-1}(U)\simeq U\ti\R$,
then to distinguish $L$ from $\Lt$ we will use local coordinates $(x^i,s)$ in $L^\ti$, where $s$ is the restriction of the function $t$ to $\Rt$. The $\Rt$-action reads $h_\zn(x^i,s)=(x^i,\zn\cdot s)$. Actually, any principal $\Rt$-bundle $P$ is of the form $L_P^\ti$, where $L_P$ is the canonical line bundle associated with $P$.
\begin{remark}\label{prodlb}
{With the described equivalence of line and $\Rt$-bundles, we can consider the \emph{category of line bundles}, i.e., the non-full subcategory of the category of vector bundles where objects are line bundles, and morphisms are isomorphisms on fibers.}
\end{remark}
\begin{example}\label{MB}
The trivial bundles are $L=M\ti\R$ and $P=\Lt=M\ti\Rt$. Probably, the simplest example of a line bundle that is not trivializable is that of the M\"obius band.
The M\"obius band, as a line bundle $B\to S^1$, can be described by two charts. We take $$\mathcal{O}=\{(x,t)\in\R^2: x\in]0,1[\}$$
and
$$\mathcal{U}=\{(x,t)\in\R^2: x\in]1/2,3/2[\}.$$
Our M\"obius band is the topological space $B$ obtained by gluing these two strips by a local homeomorphism
$$\zF:\cO\supset \{(x,t)\in\cO: x\ne 1/2\}\to\{(x,t)\in\cU: x\ne 1\}\subset\cU,$$
which reads
\be\label{tmm}\zF(x,t)=
\begin{cases}
(x,t)\quad\text{if}\quad x\in]1/2,1[\\
(x+1,-t)\quad\text{if}\quad x\in]0,1/2[\,.
\end{cases}
\ee
Hence, we can view $\cO$ and $\cU$ as coordinate charts in $B$, and $\zF$ as the corresponding transition map which, clearly, turns $B$ into a smooth manifold. It is easy to see that $B$ is a line bundle $B\to S^1=\R/\Z$, with the projection induced by $(x,t)\to x$ in the charts $\cO$ and $\cU$. These charts give us local trivializations over $S^1$ without a point.

\mn This line bundle would be trivializable if and only if there had existed a global nonvanishing section $\zs:S^1\to B$; suppose it exists. Then, in the chart $\cO$, the section $\zs$ is represented by a function $F_\cO:]0,1[\to\R$ which is positive or negative. Suppose the positivity. In $\cU$, the section $\zs$ is represented by a nonvanishing function $F_\cU:]1/2,3/2[\to\R$. But due to the form of the transition map $\zF$, the function $F_\cU$ is $F_\cO$ on $]1/2,1[$ and $F_\cU(x)=-F_\cO(x-1)$ on $]1,3/2[$, so it vanishes at some point. Of course, we can use the same charts for $\Bt$, with the only difference that $t\ne 0$.
\end{example}
\mn For $\Rt$-bundles (or $\R_+$-bundles) $P$ we have a natural concept of homogeneity.
\begin{definition}
Let $\zt:P\to M$ be a principal $G$-bundle, where $G=\Rt$ or $G=\R_+$, with the $G$-action $s\mapsto h_s$.
A tensor field $K$ on $P$ we call \emph{homogeneous} of degree $k\in\Z$ if
\[h_s^*(K)=s^k\cdot K \quad\text{for all}\quad s\in G.
\]
Here, $h_s^*(K)$ is the pullback of the tensor field $K$ associated with the diffeomorphism $h_s$. Covariant tensors that are 1-homogeneous we will call just \emph{homogeneous}.
\end{definition}
\no{This definition can be generalized for any $G$ by means of a group homomorphism $f:G\to\Rt$ and \emph{$f$-homogeneity}. We will use $f:\Rt\to\Rt$ of the form $|s|$ and $\sgn(s)$ later on.}

\begin{example}
On the trivial $\Rt$-bundle $P=M\ti\Rt$, homogeneous functions are of the form $A(x)s$ and homogeneous 1-forms read $A(x)\xd s+s\za(x)$, where $s$ is the fiber coordinate, $A$ is a function on $M$ and $\za$ is a 1-form on $M$.
\end{example}
\no The $\Rt$-action $h:\Rt\ti P\to P$ on an $\Rt$-principal bundle $\zt:P\to M$ can be lifted to a principal action on the cotangent bundle $\sT^\ast P$, which we call the \emph{phase lift} and denote $\dts h$. Note, however, that this lift is not the standard cotangent lift of a group action. It is defined by the formula
$$({\dts}h)_\zn=\zn\cdot(\sT h_{\zn^{-1}})^\ast.$$
{Like for homogeneity, we can generalize to any $G$-bundle and the lift $h_\zn^f$, multiplying not just by $\zn$ but by $f(\zn)$, for a group homomorphism $f:G\to\Rt$. However, the above phase lift is rather particular, as its formula can be applied also to actions $h_\zn$ of the monoid $(\R,\cdot)$ of multiplicative reals, so lifts of vector (and more general) bundle structures (cf. \cite{Grabowski:2009,Grabowski:2012}).}

\mn Starting from local coordinates $(x^i,s)$ in $P$ associated with a local trivialization, we get the standard adapted coordinates $(x^i,s,\p_j,z)$ on $\sT^\ast P$ in which
\begin{equation}\label{e:17}
({\dts}h)_\zn(x^i,s,\p_j,z)=(x^i,\zn s,\zn\p_j,z).
\end{equation}
In other words, for the lifted action, all these coordinates are homogeneous, $x^i$ and $z$ of degree $0$ ($\Rt$-invariant), $s$ and $\p_j$ of degree 1.

The lifted action provides $\sT^\ast P$ with the structure of a principal bundle whose base is the bundle $\sJ^1L^*_P$ of first jets of sections of the line bundle $L^*_P$, dual to $L_P$ (cf. \cite{Grabowski:2013}).
{In fact, the lifted action $h^f$ provides $\mathsf{T}^\ast P$ with the structure of a principal bundle whose base is the bundle $\mathsf{J}^1L^\ast_P \otimes\Hom(f(L_P), L_P)$, where $f(L^\ast P)$ is the line bundle associated to the 1-dimensional representation of $\mathbb{R}^\times$ on $\mathbb{R}$ provided by $f$ itself.}

\mn Local coordinates $(x^i,z)$ on $L_P^*$ induce coordinates $(x^i,p_j,z)$ on $\sJ^1L_P^*$.
We therefore have the following.
\begin{proposition}\label{pjb} {Let $L$ be a line bundle and $L^*$ its dual. Then the cotangent bundle $\sT^\ast(L^*)^\ti$, equipped with the $\Rt$-action ${\dts}h$, where $h_\zn$ is the multiplication by $\zn$ in $L^*$, is an $\R^\times$-principal bundle over the manifold $\sJ^1L$ of first jets of sections of the line bundle $L$. The projection
$$\zt:\sT^\ast(L^*)^\ti\rightarrow\sJ^1L$$
in the adapted coordinates, it takes the form
\be\label{jetproj}(x^i,s,\p_j,z)\longmapsto \big(x^i,p_j={\p_j}/s,z\big).\ee}
\end{proposition}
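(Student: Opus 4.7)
The plan is to deduce the proposition from the general $\R^\ti$-bundle structure on $\sT^\ast P$ recorded in the preceding paragraph, applied to $P=(L^*)^\ti$, and then to verify the coordinate formula (\ref{jetproj}). For this choice of $P$ one has $L_P=L^*$ and hence $L^*_P=L$, so the general statement immediately yields that $\sT^\ast(L^*)^\ti$ with the lifted action $\dts h$ is a principal $\R^\ti$-bundle over $\sJ^1L$. Freeness of $\dts h$ follows at once from (\ref{e:17}), since on $(L^*)^\ti$ the coordinate $s$ never vanishes; properness is inherited from the proper $\R^\ti$-action on $(L^*)^\ti$ through the cotangent functor, or may be read off directly from the coordinate formula.

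To identify the base of this bundle with $\sJ^1L$ canonically, I would use the classical bijection between sections $\zs$ of $L$ and linear-on-fibres, hence $1$-homogeneous, functions $f_\zs$ on $L^*$ given by $f_\zs(\zl)=\la\zl,\zs(\zt_{L^*}(\zl))\ran$. Differentiating the homogeneity $h_\zn^\ast f_\zs=\zn f_\zs$ yields $h_\zn^\ast\xd f_\zs=\zn\,\xd f_\zs$, which matches precisely the weight of the phase lift, so the prescription
\[
\zt\bigl(\xd f_\zs(\zl)\bigr)\,:=\,j^1_{\zt_{L^*}(\zl)}\zs
\]
is $\R^\ti$-invariant; conversely, every covector at $\zl\in(L^*)^\ti$ equals $\xd f_\zs(\zl)$ for some local section $\zs$, and the resulting $1$-jet depends only on that covector. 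In adapted coordinates, writing $\zs=f(x)\,e(x)$ with $e$ dual to the fibre coordinate $s$ on $L^*$, one computes $f_\zs=s\,f(x)$, hence
\[
\xd f_\zs=f(x)\,\xd s+s\,\partial_j f(x)\,\xd x^j,
\]
so the covector with components $(\p_j,z)=(s\,\partial_j f(x),f(x))$ is assigned the $1$-jet with components $(z,p_j)=(f(x),\partial_j f(x))=(z,\p_j/s)$, which is precisely (\ref{jetproj}).

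The point I expect to need the most care is confirming that the fibres of $\zt$ coincide with the orbits of $\dts h$, rather than merely containing them. This is handled by observing that the $\R^\ti$-invariant functions on $\sT^\ast(L^*)^\ti$ are locally generated by $x^i$, $z$, and the ratios $\p_j/s$; since these are $2n+1$ functionally independent invariants while the orbits of $\dts h$ are $1$-dimensional, the quotient has the correct dimension $\dim\sJ^1L=2n+1$, and the induced map from the quotient onto $\sJ^1L$ is a local diffeomorphism, hence a global isomorphism of principal bundles by connectedness of orbits.
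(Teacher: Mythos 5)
Your proposal is correct and follows essentially the same route as the paper, which presents the proposition as an immediate specialization of the general fact that $\sT^\ast P$ with the phase-lifted action is an $\Rt$-principal bundle over $\sJ^1 L_P^\ast$, applied to $P=(L^*)^\ti$ so that $L_P^\ast=L$; your coordinate verification via $f_\zs=s\,f(x)$ is exactly the computation the paper leaves implicit. One small slip: $\Rt$-orbits are disconnected (two components, for $\zn>0$ and $\zn<0$), so the final identification of fibres with orbits should be read off from the explicit formula $(x^i,s,\p_j,z)\mapsto(x^i,\p_j/s,z)$ --- which you already have, and which shows the fibre over $(x^i,p_j,z)$ is precisely the single orbit $\{(x^i,s,s\,p_j,z):s\in\Rt\}$ --- rather than from ``connectedness of orbits''.
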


\section{Contact structures}\label{Sec3}
In the literature on the subject (see, e.g., \cite{Geiges:2008}), a \emph{contact structure} is a \emph{contact distribution}, i.e., a maximally non-integrable distribution $C\subset\sT M$ of corank 1 on a manifold $M$ of odd dimension $2n+1$. Traditionally, the hyperplanes forming this distribution are called \emph{contact elements}. The maximal non-integrability means that the bilinear map
$$\zW_C:C\ti_MC\to\sT M/C\,,\quad \zW_C(X,Y)=\zy([X,Y])$$
is non-degenerate. Here,
$$\zy:\sT M\to L=\sT M/C$$
is the canonical projection onto the line bundle $L=\sT M/C$, and $[X,Y]$ is the Lie bracket of the vector fields $X,Y\in C$. This shorthand notation means that these vector fields take values in $C$. It is easy to see that $\zW_C$ is well defined as a bilinear form on $C$ with values in $L$. Note that $\zy$ can be viewed as a 1-form on $M$ with values in $L$, and the two-form $\zW$ on $C$ can be viewed as `$\xd\zy$'. {More precisely, for any connection $\nabla$ on $L$, we have $\Omega=(\xd_\nabla\vartheta)|_C$.}

The distribution $C$ is locally the kernel of a nonvanishing 1-form $\eta$ on $M$, and the maximal non-integrability condition is then expressed as
\begin{equation}\label{e:23}
\eta\wedge(d\eta)^n\neq 0.
\end{equation}
Such a form is called a \emph{contact form} and is not uniquely determined by $C$, since the kernels of $\eta$ and $f\eta$ are the same, provided $f$ is a nonvanishing function. From (\ref{e:23}) it is now easy to see that a 1-form $\zh$ is a contact form if and only if $f\zh$ is a contact 1-form. The contact form $f\zh$ we call \emph{(conformally) equivalent} to $\zh$. It defines the same contact distribution $C=\ker(\zh)$.
The local picture of contact forms is fully described.
\begin{theorem}[Contact Darboux Theorem] Let $\zh$ be a 1-form on a manifold $M$ of dimension $(2n+1)$. Then $\zh$ is a contact form if and only if, around every point of $M$, there are local coordinates $(z,p_i,q^i)$, $i=1,\dots,n$, in which $\zh$ reads
\be\label{Dc} \zh=\xd z-p_i\,\xd q^i.\ee
\end{theorem}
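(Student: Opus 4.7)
The \emph{if} direction is a direct computation. From $\zh=\xd z-p_i\,\xd q^i$ one obtains $\xd\zh=-\xd p_i\we\xd q^i$, $(\xd\zh)^n=(-1)^n\,n!\,\xd p_1\we\xd q^1\we\cdots\we\xd p_n\we\xd q^n$, and therefore
\[
\zh\we(\xd\zh)^n=(-1)^n\,n!\,\xd z\we\xd p_1\we\xd q^1\we\cdots\we\xd p_n\we\xd q^n\ne 0,
\]
which is a volume form on $\R^{2n+1}$.

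For the \emph{only if} direction, the natural strategy, consistent with the perspective adopted throughout the paper, is to pass to the symplectic cover and apply a homogeneous Darboux normalization. Set $\tilde M=M\ti\R_+$ with fibre coordinate $s$, principal action $h_\zl(x,s)=(x,\zl s)$, Euler vector field $E=s\pa_s$, and Liouville $1$-form $\theta=s\zh$. A direct calculation gives
\[
\tilde\zw:=\xd\theta=\xd s\we\zh+s\,\xd\zh,\qquad \tilde\zw^{n+1}=(n+1)\,s^n\,\xd s\we\zh\we(\xd\zh)^n,
\]
so $\tilde\zw$ is symplectic on $\tilde M$ and satisfies $h_\zl^*\tilde\zw=\zl\tilde\zw$ together with $\iota_E\tilde\zw=\theta$. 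Thus $(\tilde M,\tilde\zw,E)$ is a homogeneous symplectic manifold and $\theta$ its associated Liouville $1$-form.

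The heart of the argument is an \emph{equivariant Darboux theorem}: near any point of $\tilde M$ there exist local coordinates $(Q^0,\dots,Q^n,P_0,\dots,P_n)$ in which $\tilde\zw=\xd P_\za\we\xd Q^\za$, each $Q^\za$ is $\R_+$-invariant (degree $0$), each $P_\za$ is homogeneous of degree $1$, and moreover $P_0=s$. In such a chart $E=P_\za\pa_{P_\za}$, and hence
\[
s\zh=\theta=\iota_E\tilde\zw=P_\za\,\xd Q^\za=s\,\xd Q^0+P_i\,\xd Q^i.
\]
Dividing by $s$ and setting $z:=Q^0$, $q^i:=Q^i$, $p_i:=-P_i/s$ (all degree-$0$ functions, hence well-defined on $M$) yields $\zh=\xd z-p_i\,\xd q^i$. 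Independence of $(z,q^i,p_i)$ as local coordinates on $M$ follows from the fact that their pullbacks to $\tilde M$, together with $s$, constitute a Darboux chart on $\tilde M$.

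The principal obstacle is producing the equivariant Darboux chart with the additional normalization $P_0=s$. One route is Moser's argument carried out within the class of $\R_+$-equivariant diffeomorphisms: one starts from an arbitrary Darboux chart for $\tilde\zw$ near the chosen point and rectifies it to the required homogeneous form by an equivariant isotopy, using homogeneity of $\theta$ to control the time-dependent vector field. An alternative is to work on the transverse slice $\{s=1\}\subset\tilde M$, apply the ordinary symplectic Darboux theorem there to extract functions $(Q^0,Q^i,P_i)$, and then extend them homogeneously in $s$ along the flow of $E$, which by $\Ll_E\theta=\theta$ automatically preserves the Liouville structure.
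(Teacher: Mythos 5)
The paper does not prove this statement --- it quotes the classical contact Darboux theorem without proof --- so there is no in-paper argument to compare against; I assess your proposal on its own terms. The ``if'' direction is correct, and the reduction of the ``only if'' direction to the symplectization is sound: the computations of $(\xd\zh)^n$, of $\tilde\zw^{n+1}=(n+1)s^n\,\xd s\we\zh\we(\xd\zh)^n$, and of $\iota_E\tilde\zw=s\zh$ are all right, and \emph{given} a homogeneous Darboux chart with $Q^\za$ of degree $0$, $P_\za$ of degree $1$ and $P_0=s$, the derivation of $\zh=\xd z-p_i\,\xd q^i$ and the independence of $(z,q^i,p_i)$ on $M$ follow exactly as you say.

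The genuine gap is that the ``equivariant Darboux theorem'' carrying the normalization $P_0=s$ is where the entire content of the theorem sits, and neither of your two suggested routes establishes it. The slice route does not parse as stated: $\{s=1\}$ is $(2n+1)$-dimensional, so the ordinary symplectic Darboux theorem cannot be applied ``there''; the data that $\tilde\zw$ and $\theta$ induce on that slice are precisely $\xd\zh$ and $\zh$, so normalizing them on the slice \emph{is} the contact Darboux theorem --- this route is circular (the subsequent homogeneous extension along $E$ is the easy converse direction). The Moser route is not a routine adaptation either: the base point is not fixed by the free $\R_+$-action, so one must normalize along a whole noncompact orbit, and one must first \emph{construct} a homogeneous reference form, already in Darboux shape with the prescribed degrees and with $P_0=s$, agreeing with $\tilde\zw$ to the required order along that orbit; that construction is the nontrivial step and is omitted. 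Note that without the normalization $P_0=s$ one only obtains $\zh=\sum_{\za=0}^{n}(P_\za/s)\,\xd Q^\za$, i.e.\ $2n+2$ functions on a $(2n+1)$-manifold, which is not yet the Darboux form. The standard way to close the gap is to argue directly on $M$: put the Reeb field $\zx$ into a flow box $\zx=\pa_z$, observe that $\Ll_\zx\zh=0$ and $\Ll_\zx\xd\zh=0$, apply the symplectic Darboux theorem to the restriction of $\xd\zh$ to a $2n$-dimensional transversal $\{z=\mathrm{const}\}$ (where it is nondegenerate), extend the resulting $(p_i,q^i)$ invariantly along the Reeb flow so that $\xd\zh=-\xd p_i\we\xd q^i$ near the point, and then note that $\zh+p_i\,\xd q^i$ is closed and evaluates to $1$ on $\zx$, hence equals $\xd z'$ for an admissible coordinate $z'$.
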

\no Any contact form $\zh$ on $M$ determines uniquely a nonvanishing vector field $\zx$ on $M$, called the \emph{Reeb vector field}, which is characterized by the equations
$$i_\zx\zh=1\quad \text{and}\quad i_\zx\xd\zh=0.$$
The Reeb vector field for the contact form (\ref{Dc}) is $\zx=\pa_z$.

\begin{proposition}
A {\it contact structure} on a manifold $M$ of odd dimension $2n+1$ is a distribution of hyperplanes $C\subset\sT M$ such that $C$ is locally a kernel of a contact form.
\end{proposition}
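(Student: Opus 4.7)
The plan is to work locally around a point $p \in M$, reducing the statement to a pointwise check on the $2n$-dimensional fibre $C_p$. First, I would pick a neighborhood $U$ of $p$ on which there exists a splitting $\sT M|_U = C|_U \oplus \langle \zx \rangle$ with $\zx$ a vector field transverse to $C$, together with a defining $1$-form $\zh$ for $C$ normalized by $\zh(\zx) = 1$; such $\zh$ and $\zx$ exist for any corank-$1$ distribution. The section $\zy(\zx)$ then trivializes $L = \sT M/C$ over $U$, and the question reduces to showing that $\zW_C$ is fibrewise non-degenerate on $U$ if and only if $\zh \wedge (\xd\zh)^n$ never vanishes on $U$.

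The key identity comes from Cartan's formula: for vector fields $X, Y$ taking values in $C$ (so $\zh(X) = \zh(Y) = 0$), one has $\xd\zh(X,Y) = -\zh([X,Y])$. Under the trivialization of $L$ by $\zy(\zx)$, the class $\zy([X,Y]) \in L$ becomes the scalar $\zh([X,Y])$, so
\[
\zW_C(X,Y) \;=\; \zh([X,Y])\,\zy(\zx) \;=\; -\xd\zh(X,Y)\,\zy(\zx).
\]
Hence $\zW_C$ is non-degenerate on $C_p$ iff $\xd\zh|_C$ is a non-degenerate $2$-form on the $2n$-dimensional space $C_p$. By linear symplectic algebra, this in turn holds iff $(\xd\zh|_C)^n \neq 0$ on $C_p$. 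Finally, using the splitting $\sT M = C \oplus \langle \zx \rangle$, a direct evaluation gives
\[
(\zh \wedge (\xd\zh)^n)(\zx, e_1, \ldots, e_{2n}) \;=\; (\xd\zh|_C)^n(e_1, \ldots, e_{2n})
\]
for any basis $e_1, \ldots, e_{2n}$ of $C_p$, which closes the chain of equivalences.

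The main obstacle I anticipate is careful bookkeeping for the $L$-valued form $\zW_C$: non-degeneracy must be formulated intrinsically, and independence of the auxiliary choices of $\zh$ and $\zx$ should be checked. Rescaling $\zh \mapsto f\zh$ by a nowhere-vanishing $f$ multiplies $\xd\zh|_C$ by $f$ and changes the trivialization of $\zy(\zx)$ by $f^{-1}$, so all conditions involved are intrinsic; moreover, passing to overlaps of such neighborhoods shows that the locally defined contact forms glue (modulo conformal rescaling) into the globally defined data of the distribution $C$, which is precisely what the proposition asserts.
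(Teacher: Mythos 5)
Your proof is correct and follows exactly the route the paper leaves implicit: the statement is given without proof, the preceding paragraph already reducing it to the standard identity $\xd\zh(X,Y)=-\zh([X,Y])$ for $X,Y\in C$ together with the linear-algebra fact that $\xd\zh|_{C}$ is non-degenerate on the $2n$-dimensional fibre iff $\zh\we(\xd\zh)^n\ne 0$. Your bookkeeping of the $L$-valued form $\zW_C$ under the trivialization by $\zy(\zx)$, and the check that everything is invariant under conformal rescaling $\zh\mapsto f\zh$, correctly supply the omitted details.
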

\no A contact structure is often understood as a manifold equipped with a global contact form $\zh$. We will call such contact structures $C=\ker(\zh)$ \emph{trivial} or \emph{cooriented}. Note that (\ref{e:23}) implies that any trivializable contact manifold must be orientable.

\noindent To work with contact structures, we shall use the language of \emph{symplectic $\Rt$-principal bundles}, due to the following observation (see  \cite{Arnold:1989,Bruce:2017,Grabowska:2022}).
\begin{proposition}\label{cosy}
Let $\zh$ be a 1-form on a manifold $M$. Then, $\zh$ is a contact form if and only if the closed 2-form
\be\label{hsf}\zw_\zh(x,s)=\xd(s\zh)(x,s)=\xd s\we\zh(x)+s\cdot\xd\zh(x)\ee
on $\cM=M\ti\R_+$ is symplectic.
\end{proposition}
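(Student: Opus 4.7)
The plan is to verify that $\zw_\zh$ is symplectic on the $(2n+2)$-dimensional manifold $\cM=M\times\R_+$ exactly when $\zh$ is contact, by a direct computation of the top exterior power $\zw_\zh^{n+1}$ and comparing it with the volume-type form $\zh\we(\xd\zh)^n$ on $M$.

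First, closedness of $\zw_\zh$ is automatic, since $\zw_\zh=\xd(s\zh)$ is exact. So the content of the proposition is non-degeneracy, which, for a 2-form on a $(2n+2)$-dimensional manifold, is equivalent to $\zw_\zh^{n+1}\ne 0$ everywhere.

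Next, I would expand $\zw_\zh^{n+1}=(\xd s\we\zh+s\,\xd\zh)^{n+1}$ using the binomial formula. Two observations reduce the sum to a single term: on one hand $(\xd s\we\zh)^{2}=0$, because $\xd s\we\xd s=0$; on the other hand $(\xd\zh)^{n+1}$ is a $(2n+2)$-form pulled back from $M$, which has dimension $2n+1$, hence $(\xd\zh)^{n+1}=0$. Only the cross term with exactly one factor of $\xd s\we\zh$ survives, yielding
\begin{equation*}
\zw_\zh^{n+1}=(n+1)\,s^{n}\,\xd s\we\zh\we(\xd\zh)^{n}.
\end{equation*}
Since $s>0$ on $\R_+$ and $\xd s$ is linearly independent from forms pulled back from $M$, the right-hand side is nowhere vanishing on $\cM$ if and only if $\zh\we(\xd\zh)^n$ is nowhere vanishing on $M$; that is, if and only if $\zh$ is a contact form in the sense of \eqref{e:23}.

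There is essentially no obstacle here beyond correctly identifying the two cancellations in the binomial expansion; the real content is that the cone parameter $s$ supplies exactly the missing dimension so that the contact volume condition on $M$ becomes a symplectic non-degeneracy condition on $M\times\R_+$.
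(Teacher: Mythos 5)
Your argument is correct and is the standard proof of this fact; the paper itself gives no proof of Proposition \ref{cosy}, merely citing references, and your binomial computation $\zw_\zh^{n+1}=(n+1)\,s^{n}\,\xd s\we\zh\we(\xd\zh)^{n}$ is exactly the canonical way to establish it. The two cancellations ($(\xd s\we\zh)^2=0$ and $(\xd\zh)^{n+1}=0$ by dimension) and the final equivalence with condition \eqref{e:23} are all handled properly.
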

\no Here, $s>0$ and $\R_+$ is viewed as the multiplicative group of positive reals. It is easy to see that $\zw_\zh$ is 1-homogeneous with respect to the $\R_+$-action $h_\zn(x,s)=(x,\zn s)$,
\be\label{homogeneity}
h_\zn^*(\zw_\zh)=\zn\cdot\zw_\zh.
\ee
The symplectic form (\ref{hsf}) is called the \emph{symplectization} of $\zh$. The homogeneity of $\zw_\zh$ can be equivalently described by
$\Ll_\n\zw_\zh=\zw_\zh$, where $\n=s\pa_s$ is the generator of the $\R_+$-action and $\Ll$ denotes the Lie derivative. Conversely, every homogeneous symplectic form $\zw$ on $\cM=M\ti\R_+$ reads as in (\ref{hsf}) for some contact form $\zh$ on $M$. Of course, we could also consider $\zw_\zh$ on $M\ti\Rt$, which seems to be superfluous, as the latter manifold has two diffeomorphic connected components. {However, if we want to glue a non-trivializable principal bundle with a homogeneous symplectic form out of trivial symplectizations, the use of $\Rt$ instead of $\R_+$ is unavoidable.}
\begin{remark}
We consider $\R_+$-bundles rather than $\R$-bundles. Of course, both pictures are equivalent, but it is more convenient to see $(\R_+,\cdot)\simeq(\R,+)$ as a subgroup in $\Rt$. Standard symplectization is often considered on the cone $\cM=M\ti\R$ instead of $\cM=M\ti\R_+$, so (\ref{hsf}) takes the form
\be\label{hsf1}\zw_\zh(x,t)=\xd(e^t\zh)(x,t)=e^t\big(\xd t\we\zh(x)+\xd\zh(x)\big).\ee
\end{remark}
For a general contact manifold $(M,C)$, we do not have a global contact form $\zh$ determining the contact distribution $C$ as its kernel. The analog of $\zh$ is $\zy:\sT M\to L=\sT M/C$, and $L^*=C^o\subset\sT^*M$, where $C^o$ denotes the annihilator of the distribution $C$. Now, it is easy to see that $C$ is a contact distribution if and only if $P=\big(C^o\big)^\ti$ is a symplectic submanifold of $\sT^*M$ with its canonical symplectic form $\zw_M$. Note that $P$ is additionally an $\Rt$-bundle with respect to the standard multiplication $h_s$ in $\sT^*M$ by non-zero reals, and its symplectic form $\zw=\zw_M\big|_P$ is 1-homogeneous, $h_s^*(\zw)=s\zw$. This is the canonical symplectization of $(M,C)$. The properties of $(P,\zw)$ lead to the following general concept (cf. \cite{Bruce:2017,Grabowski:2013}).
\begin{definition} A \emph{symplectic $\Rt$-bundle} is a principal $\Rt$-bundle $\zt:P\to M$ equipped with a 1-homogeneous symplectic form $\zw$, $h^*_\zn(\zw)=\zn\cdot\zw$ for all $\zn\in\Rt$.
\end{definition}
\no Any symplectic $\Rt$-bundle $(P,\zw)$ carries additional canonical structures: the infinitesimal generator of the $\Rt$-action, denoted with $\n$ and called the \emph{Liouville vector field}, and the nonvanishing semibasic form $\zvy=i_\n\zw$ called the \emph{Liouville 1-form}. It is a `vector potential' for $\zw$, $\xd\zvy=\zw$. The contact distribution $C$ on $M$ is the image of the distribution $\ker(\zvy)$ under the projection
$\zt:P\to M$. Note that the Liouville 1-form $\zvy$ can be viewed as a map $\Phi:P\to\sT^*M$ (cf. \cite[Theorem 2.17]{Grabowska:2023}). Indeed, $\zvy$ is semibasic, so we can put
$$\zt^*\big(\Phi(p_x))=\zvy(p_x),$$
and it is easy to see that $\Phi$ yields a canonical isomorphism of the symplectic $\Rt$-bundle $(P,\zw)$ onto $\big(L^*\big)^\ti\subset\sT^*M$ with the restriction of the canonical symplectic form $\zw_M$ on $\sT^*M$. Under this embedding the Liouville 1-form $\zvy$ is the pullback $\Phi^*(\zvy_M)$ of the canonical Liouville 1-form $\zvy_M$ on $M$.

Let us stress that the Liouville 1-form is a geometric object on $P$, so the distribution $\ker(\zvy)$ does not depend on the trivialization and projects onto a contact distribution $C$ on $M$, so we call $(P,\zw)$ a \emph{symplectic cover} (symplectization) of $(M,C)$. In fact, any contact manifold $(M,C)$ admits a symplectic cover that is unique up to isomorphism.
\begin{theorem}[\cite{Grabowski:2013}]\label{main}
There is a canonical one-to-one correspondence between contact manifolds $(M,C)$ and isomorphism classes of symplectic $\Rt$-principal bundles $(P,\zw)$ over $M$, with the canonical projection $\zt:P\to M=P/\,\Rt$. The canonical representative of this class is $P=\big(L^*\big)^\ti\subset L^*$, where $L=\sT M/C$, with its canonical symplectic form.

\mn In this correspondence, the contact distribution $C$ is the projection of the kernel of the Liouville 1-form, $C=\sT\zt\big(\ker(\zvy)\big)$. {This correspondence gives rise to an equivalence of categories.}
\end{theorem}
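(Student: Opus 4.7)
The plan is to construct two functors in opposite directions and verify they are quasi-inverse, so it suffices to exhibit the canonical representative on both sides and then handle naturality.

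First, going from contact manifolds to symplectic $\Rt$-bundles, I would start with $(M,C)$, form the line bundle $L=\sT M/C$, and take $P=(L^*)^\ti\subset\sT^*M$. By construction, $(L^*)^\ti=C^o\setminus 0_M$, and the standard $\Rt$-action by fiberwise scalar multiplication on $\sT^*M$ restricts to a principal $\Rt$-action on $P$ with quotient $M$. Equip $P$ with $\zw=\zw_M|_P$, where $\zw_M$ is the canonical symplectic form. Homogeneity $h_s^*(\zw)=s\zw$ is automatic from the fact that $\zw_M=\xd\zvy_M$ and $\zvy_M$ is 1-homogeneous (the multiplication by $s$ on $\sT^*M$ sends $\zvy_M$ to $s\zvy_M$). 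To see that $\zw$ is symplectic on $P$, one chooses locally a contact form $\zh$ for $C$; this provides a local trivialization $C^o\simeq M\ti\R$, in which $P\simeq M\ti\Rt$ and $\zw$ pulls back to $\xd(s\zh)=\xd s\we\zh+s\,\xd\zh$. Nondegeneracy in Darboux coordinates, or directly via $\zh\we(\xd\zh)^n\ne 0$, gives $\zw\we\zw^{n}=n!\,s^n\,\xd s\we\zh\we(\xd\zh)^n\ne 0$. This shows $(P,\zw)$ is indeed a symplectic $\Rt$-bundle.

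Conversely, given a symplectic $\Rt$-bundle $(\zt:P\to M,\zw)$, let $\n$ be the infinitesimal generator of the $\Rt$-action and set $\zvy=i_\n\zw$. Homogeneity gives $\Ll_\n\zw=\zw$, hence $\xd\zvy=\zw$ and $\Ll_\n\zvy=\zvy$, so $\zvy$ is 1-homogeneous. The identity $i_\n\zvy=\zw(\n,\n)=0$ and invariance under $h_s$ together with the fact that $\n$ spans the vertical bundle imply that $\zvy$ is semibasic. Thus $\zvy$ defines a map $\Phi:P\to\sT^*M$ by $\zt^*(\Phi(p))=\zvy(p)$. Since $\n$ is nowhere zero and $\zw$ is nondegenerate, $\zvy=i_\n\zw$ is nowhere zero, so $\Phi$ takes values in $\sT^*M\setminus 0_M$; $\Rt$-equivariance (with respect to the scaling on $\sT^*M$) is immediate from $h_s^*\zvy=s\zvy$. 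Pick any local section $\zs:U\to P$; the 1-form $\zh:=\zs^*\zvy$ is a contact form on $U$, because $\zs^*\zw=\xd\zh$ together with $(\zt\circ\zs)=\id$ lets us pull $\zw^{n+1}\ne 0$ down to $\zh\we(\xd\zh)^n\ne 0$. The contact distribution on $U$ is $\ker(\zh)$, and globally this is the image $C=\sT\zt(\ker(\zvy))$. Using different local sections $\zs'=h_f\circ\zs$ rescales $\zh$ by $f$, so $C$ is well-defined and independent of choices.

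Next, I would check that $\Phi$ lands in $(L^*)^\ti$ and is a symplectic $\Rt$-bundle isomorphism onto it. The image of $\Phi$ consists of covectors annihilating $C$, i.e.\ of $C^o\setminus 0_M=(L^*)^\ti$, by definition of $C$. Injectivity on fibers follows from $\Rt$-equivariance and transitivity of the $\Rt$-action on each fiber of $C^o\setminus 0_M$; a dimension count then forces $\Phi$ to be a diffeomorphism. Pulling back $\zvy_M$ via $\Phi$ gives $\zvy$ by construction, hence $\Phi^*\zw_M=\zw$. This proves that the two constructions are mutually inverse up to canonical isomorphism, and shows that $P=(L^*)^\ti$ is the canonical representative.

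Finally, for the equivalence of categories, a morphism of contact manifolds $(M,C)\to(M',C')$ is a smooth map $\varphi:M\to M'$ with $\sT\varphi(C)\subset C'$; this induces a bundle map $\wt\varphi:(L')^*\to L^*$ by pullback of covectors, restricting to an $\Rt$-equivariant map $P\to P'$ (after taking the appropriate orientation conventions) that intertwines the Liouville forms and therefore the symplectic forms. Conversely, any $\Rt$-equivariant symplectic map $P\to P'$ sends $\n$ to $\n'$ and $\zvy$ to the pullback of $\zvy'$, hence descends to a contact morphism $M\to M'$. The hardest point is not the pointwise geometry but the global gluing: one must check that in the non-trivializable case the local contact forms obtained from different local sections patch into the single geometric datum $C$, and that the functors are strictly inverse on morphisms, which reduces to the fact that the assignment $\zvy\leftrightarrow\Phi$ is natural with respect to all $\Rt$-equivariant bundle maps — a direct consequence of the universal property of $\sT^*M$.
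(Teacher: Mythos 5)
Your overall strategy coincides with the one the paper (following \cite{Grabowski:2013}) sketches: realize $P=(C^o)^\ti=(L^*)^\ti$ inside $\sT^*M$ with the restricted canonical form, and conversely use the semibasic, $1$-homogeneous Liouville form $\zvy=i_\n\zw$ to build the equivariant map $\Phi:P\to\sT^*M$ and show it is an isomorphism onto $(L^*)^\ti$ pulling $\zvy_M$ back to $\zvy$. Most of the object-level argument is fine, but two steps do not work as written.

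First, your derivation that $\zh=\zs^*\zvy$ is a contact form is vacuous: $\zs^*(\zw^{n+1})=(\zs^*\zw)^{n+1}=(\xd\zh)^{n+1}$, which vanishes identically on the $(2n+1)$-dimensional base, so ``pulling $\zw^{n+1}\ne 0$ down'' yields $0=0$ and proves nothing. The correct route, for which you already have all the ingredients, is to use $1$-homogeneity together with semibasicness of $\zvy$ to conclude $\zvy=s\,\zt^*\zh$ in the trivialization determined by $\zs$, hence $\zw=\xd s\we\zt^*\zh+s\,\zt^*\xd\zh$ on $\zt^{-1}(U)$, and then $\zw^{n+1}=(n+1)\,s^n\,\xd s\we\zt^*\big(\zh\we(\xd\zh)^n\big)$, so nondegeneracy of $\zw$ is equivalent to $\zh\we(\xd\zh)^n\ne 0$. (The same computation also corrects the constant in your forward direction, where you wrote $n!$ instead of $n+1$.)

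Second, in the categorical part the induced map goes the wrong way. A smooth map $\varphi$ with $\sT\varphi(C)\subset C'$ gives, by pullback of covectors, fiberwise maps $(C')^o_{\varphi(x)}\to C^o_x$, i.e.\ a morphism from $(L')^*$ towards $L^*$, not a map $P\to P'$. To obtain an $\Rt$-equivariant map $P\to P'$ you must invert these fiberwise maps, which is possible precisely when the induced morphism of normal line bundles $\sT M/C\to \sT M'/C'$ is an isomorphism on fibers; this is exactly the restriction on morphisms built into the category of line bundles in Remark \ref{prodlb}. With only $\sT\varphi(C)\subset C'$ the construction fails (e.g.\ when $\varphi$ collapses the normal direction). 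Once the morphisms are restricted in this way, $F=(\varphi^*)^{-1}:P\to P'$ satisfies $F^*\zvy'=\zvy$ and hence $F^*\zw'=\zw$, and the functoriality and quasi-inverse checks go through as you indicate.
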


\mn Symplectic $\Rt$-bundles are generally not trivializable. Any local trivialization induces a coordinate $s$ in fibers, and a local contact form $\zh$ on $M$ such that
\be\label{sffc}
\zw=\xd s\we\zh+s\cdot\xd\zh,
\ee
For coorientable contact manifolds, the \emph{Liouville vector field} is in these coordinates $\n=s\pa_s$, and the \emph{Liouville 1-form} is $\zvy=s\zh$. The contact form $\zh$ on $M$ can be obtained as the restriction of the Liouville 1-form $\zvy$ to the submanifold $\tM$ of $P$ being the locus $s=1$ which is canonically diffeomorphic with $M$ \emph{via} the projection $\zt:P\to M$. Equivalently, this locus is the image of a (local) section $\za:M\to P$ of $P$, $\tM=\za(M)$. Since we can view $\za$ as a 1-form $\wt{\za}=\Phi\circ\za:M\to\sT^*M$ on $M$,
\be\label{cfnrs}\zh_\za\simeq\zvy\,\big|_{\za(M)}=\wt{\za}^*(\zvy_M)=\wt{\za}.\ee
The latter identity follows from the universal property of the Liouville 1-form $\zvy$: for any 1-form $\zb:M\to\sT^*M$ on $M$, we have $\zb=\zb^*(\zvy_M)$. Note also that the 1-form $\wt{\za}$ can be viewed as the map
$$\wt{\za}=\za\circ\zy:\sT M\to\R,$$
where we understand $\za$ as a linear function $\zi_\za$ on $L$. This way, we get the following proposition, which we will use for local descriptions in the general case.
\begin{proposition}\label{equiv}
Let $(P,\zw)$ be a symplectic cover of a coorientable contact manifold $(M,C)$ with the projection $\zt:P\to M$. Then, there are canonical one-to-one correspondences between
\begin{description}
\item{(a)} sections $\za:M\to P$;
\item{(b)} contact forms $\zh_\za$ on $M$ representing $C$, $C=\ker{\zh_\za}$, determined by the condition
$\zt^*(\zh_\za)=\zvy$ on the submanifold $\za(M)\subset P$;
\item{(c)} 1-homogeneous functions $s:P\to\Rt$, given by $s\circ\za=1$;
\item{(d)} regular linear functions $\zi_\za:L\to\R$ on $L=\sT M/C$;
\item{(e)} regular linear functions $\wt{\za}:\sT M\to \R$ vanishing on $C$, given by $\wt{\za}=\zi_\za\circ\zy$.
\end{description}
{Here, the regularity means that the vertical derivative is nonvanishing, i.e., the linear function corresponds to a nonvanishing section of the dual vector bundle.}
\end{proposition}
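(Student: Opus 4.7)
The plan is to chain the five items through canonical bijections, most naturally going (a) $\leftrightarrow$ (c), (a) $\leftrightarrow$ (d), (d) $\leftrightarrow$ (e), and (a) $\leftrightarrow$ (b), each bijection being just a change of viewpoint once the right identifications are made.

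First I would handle (a) $\leftrightarrow$ (c), which is essentially a tautology for an $\R^\times$-principal bundle: given a section $\alpha$, every point $p\in P$ has the form $p=h_\nu(\alpha(\tau(p)))$ for a unique $\nu\in\Rt$, and $s(p):=\nu$ is 1-homogeneous with $s\circ\alpha=1$; conversely, for any such $s$, the locus $\{s=1\}$ is a submanifold meeting each fiber exactly once, hence the graph of a section. Next, (a) $\leftrightarrow$ (d) uses that $P=(L^*)^\ti\subset L^*$ is the complement of the zero section: sections $\alpha:M\to P$ are precisely nowhere-vanishing sections of $L^*$, which, viewed dually, are exactly regular linear functions $\iota_\alpha:L\to\R$ (evaluation against a nowhere-zero section of $L^*$ gives a fiberwise-linear function whose vertical derivative is nonzero, and every such function arises this way). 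Finally, (d) $\leftrightarrow$ (e) is immediate from the short exact sequence $0\to C\to \sT M\xrightarrow{\vartheta} L\to 0$: pullback along the surjection $\vartheta$ identifies linear functions on $L$ with linear functions on $\sT M$ vanishing on $C$, and clearly preserves regularity, so $\wt\alpha=\iota_\alpha\circ\vartheta$ is the required bijection.

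The heart of the statement is the equivalence (a) $\leftrightarrow$ (b). Given $\alpha$, the projection $\tau$ restricts to a diffeomorphism $\alpha(M)\to M$, so the condition $\tau^*(\eta_\alpha)=\vartheta$ on $\alpha(M)$ defines $\eta_\alpha$ uniquely as a 1-form on $M$; equivalently, $\eta_\alpha=(\tau|_{\alpha(M)})_*\big(\vartheta|_{\alpha(M)}\big)=\alpha^*\vartheta$. To see that this is a contact form with kernel $C$, I would argue as follows: under the embedding $\Phi:P\hookrightarrow\sT^*M$ of Theorem \ref{main}, the section $\alpha$ corresponds to a nowhere-vanishing 1-form $\wt\alpha:M\to\sT^*M$ annihilating $C$, and $\alpha^*\vartheta=\wt\alpha^*(\vartheta_M)=\wt\alpha$ by the universal property of the Liouville form on $\sT^*M$, as recalled in \eqref{cfnrs}. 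Hence $\ker(\eta_\alpha)=\ker(\wt\alpha)=C$, and since $C$ is already a contact distribution, $\eta_\alpha$ must be a contact form; alternatively, a local trivialization with $\alpha(M)=\{s=1\}$ turns $\omega$ into the form \eqref{sffc} with $\eta=\eta_\alpha$, and Proposition \ref{cosy} then forces $\eta_\alpha$ to satisfy \eqref{e:23}. Conversely, any contact form $\eta$ with $\ker\eta=C$ produces a nowhere-vanishing $\wt\eta:M\to L^*$ which, composed with $\Phi^{-1}$, gives the required section; the two constructions are mutually inverse by the same universal property.

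I expect the only subtle point to be verifying that the map defined in (b) lands in \emph{contact} forms rather than in arbitrary 1-forms with kernel $C$; this is where Proposition \ref{cosy}, together with the fact that $\omega=\xd\vartheta$ is symplectic and that $\alpha(M)$ is transverse to the Liouville vector field $\nabla$, is essential. Everything else reduces to unwinding definitions of the line-bundle / principal-bundle duality and of the Liouville form, and to checking that regularity (nowhere vanishing, equivalently nonvanishing vertical derivative) is preserved along the whole chain.
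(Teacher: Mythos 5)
Your proposal is correct and follows essentially the same route as the paper's (implicit) argument preceding the proposition: identifying sections of $P=(L^*)^\ti$ with nonvanishing sections of $L^*$, hence with regular linear functions on $L$ and their pullbacks along $\zy$, and obtaining $\zh_\za=\za^*\zvy=\wt{\za}$ via the universal property of the Liouville form as in (\ref{cfnrs}). Your extra care in checking that $\zh_\za$ is genuinely a contact form (its kernel is the contact distribution $C$, and any nonvanishing form with kernel $C$ is conformally equivalent to a local contact form) fills in a point the paper leaves tacit.
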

\begin{remark}
The above interpretation of contact structures as symplectic $\Rt$-bundles $(P,\zw)$ is very useful in contact Hamiltonian mechanics \cite{Grabowska:2022,Grabowska:2023a}, simplifying the traditional approaches using contact forms. We simply define \emph{contact Hamiltonians} as 1-homogeneous functions $H$ on $P$. Then, the corresponding Hamiltonian vector field $X_H$ on $P$ is $\Rt$-invariant, thus projects onto a contact vector field on the base contact manifold $M$. Note that, alternatively, we can view 1-homogeneous functions on $P$ as linear functions on $L^*$, thus as sections of the line bundle $L$. Also, reductions of contact Hamiltonian systems can be carried out in this framework \cite{Grabowska:2023}. All this can be easily extended to Jacobi structures (known also as \emph{local Lie algebras} \cite{Kirillov:1976} or \emph{Jacobi bundles} \cite{Marle:1991}), understood as Poisson $\Rt$-bundles \cite{Bruce:2017,Grabowski:2013}. Note also that it is often much easier to interpret contact manifolds as symplectic $\Rt$-bundles than to try to define the contact structure directly.
\end{remark}
\begin{example}
For a manifold $M$, the cotangent bundle  $\sT^*M$ with the zero section removed, $(\sT^*M)^\ti$, is an $\Rt$-bundle with respect to the multiplication by reals in $\sT^*M$. The canonical symplectic form $\zw_M$ restricted to $(\sT^*M)^\ti$ is still symplectic and 1-homogeneous, so we deal with a symplectic $\Rt$-bundle. According to Theorem \ref{main}, this defines a canonical contact structure on the projectivized cotangent bundle
$\Pe\,\sT^*M=(\sT^*M)^\ti/\Rt$. This structure is coorientable if and only if $M$ is odd-dimensional. If we quotient $(\sT^*M)^\ti$ by $\R_+$ instead of $\Rt$, we get a contact structure on a bundle of spheres $(\sT^*M)^\ti/\R_+$ over $M$, this time coorientable, but generally not canonically trivial.
\end{example}

\section{Canonical contact structures on jet bundles}
{Let $L\to M$ be a line bundle, $L^*$ be its dual, and $(L^*)^\ti$ be the corresponding $\Rt$-subbundle with coordinates $(x^i,s)$ and the standard $\Rt$-action, $h_\zn(x,s)=(x,\zn s)$. As we already know (see (\ref{jetproj})), $\sT^*(L^*)^\ti$ is canonically an $\Rt$-bundle over $\sJ^1L$ with the lifted $\Rt$-action $\dts h$. The following is well known (see e.g. \cite{Grabowski:2013}).
\begin{proposition}\label{triv}
For every line bundle $\zt:L\to M$, there is a canonical contact structure on the jet bundle $\sJ^1L$ for which the symplectic $\Rt$-bundle $\sT^*(L^*)^\ti$ is a symplectic cover. This contact structure is trivializable if and only if the line bundle $L\to M$ is trivializable.
\end{proposition}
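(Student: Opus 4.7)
The plan is to prove the statement in two stages: first, produce the contact structure on $\sJ^1L$ via a symplectic $\Rt$-bundle, and then analyse its trivializability. For the first stage, I begin by observing that $\sT^*(L^*)^\ti$ carries a 1-homogeneous symplectic form with respect to the lifted action $\dts h$ of Proposition \ref{pjb}. In the adapted coordinates $(x^i,s,\p_j,z)$, the canonical Liouville 1-form is $\zvy = \p_j\,\xd x^j + z\,\xd s$; since $(\dts h)_\nu$ scales $s$ and $\p_j$ by $\nu$ while fixing $x^i$ and $z$, one checks immediately that $(\dts h)_\nu^*\zvy = \nu\zvy$, whence $\zw=\xd\zvy$ is also 1-homogeneous. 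Thus $(\sT^*(L^*)^\ti,\zw)$ is a symplectic $\Rt$-bundle over $\sJ^1L$, and Theorem \ref{main} yields the desired canonical contact structure $C$ for which $\sT^*(L^*)^\ti$ is a symplectic cover.

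For the trivializability equivalence, the key structural fact I would establish is that the $\Rt$-principal bundle $\sT^*(L^*)^\ti \to \sJ^1L$ is canonically the pullback of $(L^*)^\ti \to M$ along the jet projection $\sJ^1L \to M$. Explicitly, the assignment
\[
(x^i,s,\p_j,z)\longmapsto\big((x^i,\p_j/s,z),(x^i,s)\big)\in \sJ^1L\times_M(L^*)^\ti
\]
is a global diffeomorphism (both sides transform identically under change of local trivialization of $L$), and it intertwines the $\Rt$-actions, which act only on the second factor on the right.

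By Proposition \ref{equiv}, the contact structure $C$ is trivializable precisely when the principal $\Rt$-bundle $\sT^*(L^*)^\ti \to \sJ^1L$ admits a global section. If $L$ is trivializable then so is $(L^*)^\ti \to M$, and since the pullback of a trivial principal bundle is trivial, we obtain such a section. Conversely, given a global section $\tau:\sJ^1L\to\sT^*(L^*)^\ti$, I use that $\sJ^1L\to M$ is itself a vector bundle (because $L$ is one) and therefore admits the canonical zero section $\rho:M\to\sJ^1L$; projecting $\tau\circ\rho$ onto the second factor of the fibre product $\sJ^1L\times_M(L^*)^\ti$ then yields a global section of $(L^*)^\ti\to M$, whence $L^*$, and therefore $L$, is trivializable.

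The main technical point is the coordinate-free verification of the pullback identification in the second paragraph; I foresee no serious obstacle, only care in checking that the local maps patch globally and are $\Rt$-equivariant. The remainder reduces to the standard correspondence between trivializability of a principal bundle and existence of a global section, combined with functoriality of pullback.
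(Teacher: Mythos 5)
Your proof is correct. The paper itself offers no argument for this proposition (it is dismissed as ``well known'' with a reference to \cite{Grabowski:2013}), so there is nothing to compare against; but both halves of your argument are sound. The homogeneity computation $({\dts}h)_\zn^*\zvy=\zn\zvy$ in the adapted coordinates of (\ref{e:17}) is exactly right, and Theorem \ref{main} then produces the contact structure. The key structural observation --- that $\sT^*(L^*)^\ti\to\sJ^1L$ is the pullback of $(L^*)^\ti\to M$ along $\zt^1:\sJ^1L\to M$ --- is correct and is the cleanest route to the trivializability statement; note that your map is simply the pair of globally defined projections $(\zt,\zp_{(L^*)^\ti})$, which makes the global patching automatic once the local coordinate formula shows it is a fiberwise bijection. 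For the converse direction you could phrase the argument slightly more slickly: since the zero section $\zr$ of the vector bundle $\sJ^1L\to M$ satisfies $\zt^1\circ\zr=\id_M$, one has $\zr^*\big((\zt^1)^*(L^*)^\ti\big)\cong(L^*)^\ti$, so triviality of the pullback forces triviality of $(L^*)^\ti$. One small point of hygiene: Proposition \ref{equiv} is stated under a coorientability hypothesis, so for the implication ``global section $\Rightarrow$ $C$ trivializable'' you should instead invoke the identity (\ref{cfnrs}) (the restriction of the Liouville form to the image of a section is a global contact form representing $C$), which holds for any symplectic cover and yields coorientability as a conclusion rather than assuming it.
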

\begin{remark} The jet bundle $\zt^1:\sJ^1L\to M$ is a vector bundle, and it is easy to see that the contact structure on the vector bundle $E=\sJ^1L$ is \emph{linear}, i.e., it is locally induced by linear contact forms. Equivalently, in a more advanced geometrical language, the contact distribution $C\subset \sT E$ is a double vector subbundle in the \emph{double vector bundle} $\sT E$ (cf. \cite{Grabowski:2009,Grabowski:2012}), i.e., it is also a vector subbundle with respect to the projection $\sT\zp^1:\sT E\to\sT M$. One can show \cite{Grabowski:2013} that all linear contact structures are of this type, like all linear symplectic structures are isomorphic to cotangent bundles with their canonical symplectic forms.
\end{remark}}
\begin{example}\label{jetm} Let us consider again the M\"obius band $B\to S^1$ from Example \ref{MB}.
The line bundle structure on $B^*\to S^1$ has a dual description by two charts, $\cO^*$ and $\cU^*$, completely analogous to that in (Example \ref{MB}) for $B$. As the domains of two charts in $\sJ^1B^*$, we take
$\bar{\mathcal{O}}=(\zb)^{-1}(\mathcal{O}^*)$  and  $\bar{\mathcal{U}}=(\zb)^{-1}(\mathcal{U}^*)$, where $\zb:\sJ^1B^*\to B^*$ is the canonical projection.
The adapted coordinates in $\bar{\mathcal{O}}$ are
$$(x, p, z)\in\,]0,1[\times\R\times \R,$$
while the adapted coordinates in $\bar{\mathcal{U}}$ are
\[(x',p',z')\in\,\Big]1/2,3/2\Big[\times\R\times \R,\]
with the transition map
\[(x',p',z')=\bar\zF(x,p,z)=
\begin{cases}
(x,p,z)\quad\text{if}\quad x\in]1/2,1[\\
(x+1,-p,-z)\quad\text{if}\quad x\in]0,1/2[\,.
\end{cases}
\]
The coordinate $p$ changes sign in the same way as $z$, because if a section $\sigma$ is given in {the} chart $\mathcal{O^*}$ as $x\mapsto (x,z(x))$, then $p(\sj^1\sigma(x))=\frac{\pa z}{\pa x}$.

\mn The structure of the cotangent bundle $\zp_\Bt:\sT^*B^\ti\to B^\ti$ can be again described in two charts $\tO=\zp_\Bt^{-1}(\cO)$ and $\tU=\zp_\Bt^{-1}(\cU)$, with the adapted coordinates $(x,s,p,z)$ and $({x'},s',p',z')$, taking values in
$]0,1[\ti\Rt\ti\R\ti\R$ and $]1/2,3/2[\ti\Rt\ti\R\ti\R$, respectively. The transition map is (cf. (\ref{tmm}))
\[(x',s',p',z')=\wt{\zF}(x,s,p,z)=
\begin{cases}
(x,s,p,z)\quad\text{if}\quad x\in]1/2,1[\\
(x+1,-s,-p,-z)\quad\text{if}\quad x\in]0,1/2[\,.
\end{cases}
\]
Of course, the canonical symplectic form on $\sT^*B$ does not depend on the choice of coordinates. However, the contact form $\zh=\xd z-p\xd x$ in $\wt{\cO}$, changes under the transition map into $\zh'=\xd z'-p'\xd x'$ for $x\in]1/2,1[$, and into $-\zh'$ for $x\in]0,1/2[$. Since the 1-forms $\zh$ and $\zh'$ are basic, we can view them as contact forms on the charts $\bar\cO$ and $\bar\cU$ in $\sJ^1B^*$. Of course, $\ker(\zh)$ and $\ker(\zh')$ agree on the intersection of charts and define the canonical contact distribution $C$ on $\sJ^1B^*$. In other words, we can view the contact structure on $\sJ^1B^*$ as represented by the contact form $\zh$ on $\bar{\cO}$, and $\zh'$ on $\bar{\cU}$.

Interestingly, the non-coorientable contact structure lives on the vector bundle $\sJ^1 B^*\rightarrow S^1$, which is trivializable. Let us consider the following two sections of the bundle $B^*\rightarrow S^1$, written in coordinates from the chart $\mathcal{O}^*$ as
$$\sigma_1(x)=(x,\sin(\zp x)), \qquad \sigma_2(x)=(x, \cos(\zp x)).$$
It is easy to check that they are well defined globally, since
$$\sin\big(\zp(x+1)\big)=-\sin(\zp x)\quad\text{and}\quad \cos\big(\zp(x+1)\big)=-\cos(\zp x).$$
Now, in the chart $\bar{\mathcal{O}}$, we can write the first jet prolongations of $\sigma_1$ and $\sigma_2$ as
$$\sj^1\sigma_1(x)=(x,\sin(\zp x),\zp\cos(\zp x)), \qquad \sj^1\sigma_2(x)=(x,\cos(\zp x),-\zp\sin(x)).$$
The two sections $\sj^1\sigma_1$ and $\sj^1\sigma_2$ are global, nonvanishing, and linearly independent sections of $\sJ^1B^*\rightarrow S^1$. This shows that the vector bundle $\sJ^1L\to M$ may be trivializable even for a non-trivializable line bundle $L\to M$.
\end{example}

\section{Calibrations and paired structures}\label{sec-cal}
Any $\Rt$-bundle $\zt:P\to M$ is of the form $P=\Lt$ for a line bundle $\zt:L\to M$. Let us choose a VB-metric $g_L$ on $L$. Recall that a \emph{VB-metric} on a vector bundle $E$ is a symmetric form $g_E\in\Sec(\sS^2E^*)$ that induces scalar products on fibers. The metric $g_L$ is completely determined by the norm,
\be\label{norm}\s:L\to\R_+, \quad \s(v)=\nm{v}=\sqrt{g_L(v,v)}.\ee
The function $\s$ is positive and positively homogeneous on $P=\Lt$, i.e., $\s(tv)=t\s(v)$ for $t>0$. {The latter simply means that $s$ is $f$-homogeneous where $f:\mathbb{R}^\times\rightarrow \mathbb{R}^\times$ is the absolute value $s\mapsto|s|$}. Conversely, any positive and positively homogeneous function $\s$ on $P=\Lt$ defines a VB-metric $g_L$ \emph{via} (\ref{norm}).
{Such functions on an $\mathbb{R}^\times$-bundle $P$ will be called \emph{calibrations}, and $(P,\s)$ -- a \emph{calibrated $\Rt$-bundle}. The terminology is motivated by the fact that fixing a calibration fixes each local fiber coordinate up to a factor, so it plays the role of reference data.}

It is well known that VB-metrics always exist, so calibrations do exist on every $\Rt$-bundle. They differ by a factor being a (pullback of) a positive function on $M$. Since in the case of non-trivializable $P$ we do not have nonvanishing sections, {these are calibrations which we will use instead}.

\mn It is obvious that any calibration is a regular function, $\xd\s\ne 0$. This immediately implies the following.
\begin{theorem}\label{calibration}
Every calibration $\s$ on an $\Rt$-bundle $\zt:P\to M$ defines a horizontal foliation $\cF_\s$ on $P$ whose leaves $\tM_c$ are the level sets $\s=c>0$. The leaves of $\cF_\s$ are 2-sheet covers of $M$ under the projection $\zt_c=\zt\big|_{\tM_c}$, and they are connected if and only if $P$ is not trivializable; otherwise, they consist of two components. The pullback bundle $\tP_c=\zt^*_cP$ of $P$ over $\tM_c$ is a trivial $\Rt$-bundle, with the tautological global section $\zs_c:\tM_c\to\tP_c$: if $y\in\tM_c$, then $\big(\tP_c\big)_y=P_{\zt(y)}$ and $\zs_c(y)=y\in P_{\zt(y)}$.
\end{theorem}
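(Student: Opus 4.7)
My plan is to treat the four claims of the theorem in sequence: the foliation, the covering property, the connectedness dichotomy, and the triviality of $\tP_c$.

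First, I would use the regularity of $\s$ ($\xd\s\ne 0$) to conclude that each level set $\tM_c:=\s^{-1}(c)$, $c>0$, is a smooth hypersurface in $P$, and that together they form the regular codimension-one foliation $\cF_\s$. To count $\tM_c\cap P_x$, I would fix $p_0\in P_x$ and use that every element of $P_x$ is $h_t(p_0)$ for a unique $t\in\Rt$; the $f$-homogeneity with $f(t)=|t|$ reduces the equation $\s(h_t p_0)=c$ to $|t|\s(p_0)=c$, yielding exactly the two solutions $t=\pm c/\s(p_0)$. Moreover, along each $\R_+$-orbit the map $t\mapsto\s(h_t p)=t\s(p)$ is a diffeomorphism onto $\R_+$, so $\tM_c$ is transverse to the vertical distribution of $\zt$; this transversality combined with the fiberwise count of two points makes $\zt_c:=\zt|_{\tM_c}$ a 2-sheeted covering map of $M$.

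For the connectedness dichotomy, the key observation is that $\tM_c$ meets each $\R_+$-orbit exactly once (by the diffeomorphism $\R_+\cdot p\to\R_+$ just noted), so the composition
\[
\tM_c\hookrightarrow P\twoheadrightarrow P/\R_+
\]
is a diffeomorphism of double covers of $M$. I would then argue that the canonical ``sign cover'' $P/\R_+\to M$ is trivial as a $\Z/2$-bundle if and only if it admits a continuous global section, equivalently, $P$ admits a nonvanishing section in each $\R_+$-component, i.e., $L_P$ (with $P=L_P^\ti$) admits a nonvanishing section, which for line bundles is equivalent to trivializability. Hence $\tM_c$ splits into two copies of $M$ precisely when $P$ is trivializable, and is otherwise connected. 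For the last claim, I would observe that since $y\in\tM_c\subset P$ and $\zt_c(y)=\zt(y)$, the tautological assignment $\zs_c(y):=y\in P_{\zt(y)}=(\tP_c)_y$ is a smooth nowhere-vanishing global section of the $\Rt$-principal bundle $\tP_c=\zt_c^*P\to\tM_c$, which trivializes it via $(y,t)\mapsto h_t(\zs_c(y))$.

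The hard part will be the identification $\tM_c\cong P/\R_+$ that reduces the connectedness question to the classical orientation dichotomy for line bundles. It depends crucially on $\s$ being \emph{positive} (not merely nonvanishing) and $f$-homogeneous with $f(t)=|t|$, so that the level sets of $\s$ pick out exactly one representative in each $\R_+$-orbit while still meeting each full $\Rt$-fiber in two points. This positive-versus-$f$-homogeneous distinction is precisely what makes calibrations useful in the non-cooriented setting, where no genuine $1$-homogeneous positive function can exist on $P$.
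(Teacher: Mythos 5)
Your proof is correct and fills in exactly what the paper treats as immediate: the text offers no written proof of this theorem, deriving it in one line from the regularity $\xd\s\ne 0$ and the $|\cdot|$-homogeneity of $\s$, and your argument (two-point fibers from $|t|\s(p)=c$, transversality to the fibers, and the identification $\tM_c\cong P/\R_+$ reducing connectedness to orientability of $L_P$) is the intended elaboration. The only cosmetic simplification available is that the detour through $P/\R_+$ can be skipped: if $\tM_c$ is disconnected, each component is the image of a global section of $P$, i.e., a nonvanishing section of $L_P$, which gives the trivializability directly (and conversely, in a global trivialization $\s=|s|u(x)$ visibly has two-component level sets).
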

\begin{remark}\label{conn} The foliation $\cF_\s$ is $\Rt$-invariant and can be viewed as a flat principal connection on $P$ called an \emph{$\s$-connection}. The corresponding $\Rt$-invariant horizontal distribution $\cH_\s$ consists of vectors tangent to the leaves of the foliation. It is the kernel of the connection 1-form $\zz^\s=\xd\s/\s$ on $P$. The connection 1-form is a true 1-form, since the Lie algebra of $\Rt$ is $\R$, with the canonical fundamental vector field (more precisely, the negative of the fundamental vector field) $\n$ on $P$.
The horizontal lifts of vector fields $X$ on $M$ we will denote $\hX$, and the pullbacks to $P$ of symmetric or skew-symmetric differential forms $\zb$ on $M$ with $\wh{\zb}$. Given a calibration $\s$, in a neighbourhood $U$ of each point of $M$ we can find a local trivialization $\zt^{-1}(U)\simeq U\ti\Rt$ such that the fiber coordinate $s$ satisfies $\s=|s|$.  Since, in such coordinates, we have
$$\big(f^a(x)\pa_{x^a}\big)^\s=f^a(x)\pa_{x^a}\quad\text{and}\quad\big(h_a(x)\xd x^a\big)^{\wh{}}=h_a(x)\xd x^a,$$
we will sometimes identify, with some abuse of notation, vector fields $X$ on $M$ with their horizontal lifts in $P$, and differential forms on $M$ with their pullbacks to $P$. This should be clear from the context.
\end{remark}
\no Fixing a calibration $\s$, we easily get the following (see, e.g., \cite{Grabowski:2012a,Grabowski:2006}).

\begin{proposition}\label{phf}
On every $\Rt$-bundle $\zt:P\to M$ with a fixed calibration $\s$, there is a unique maximal atlas of local trivializations of $P$, consisting of an open cover $\{U_\za\}_{\za\in\zL}$ of $M$, and local trivializations
\be\label{gtriv}\zf_\za:\zt^{-1}(U_\za)\to U_\za\ti\Rt\ee
such that the fiber coordinate $s_\za$ associated with $\zf_\za$ satisfies $|s_\za|=\s\big|_{U_\za}$. For this atlas, the transition maps are reduced to a sign change in the fiber coordinates,
$$\zf_{\za\zb}:(U_\za\cap U_\zb)\ti\Rt\to (U_\za\cap U_\zb)\ti\Rt\,,\quad
\zf_{\za\zb}(x,s)=(x,\pm s)\,.$$
Consequently, in the presence of a calibration, one can reduce locally problems concerning homogeneous structures on $P$ to the case of the trivial $\R_+$-bundles, reducing local trivializations (\ref{gtriv}) to $U_\za\ti\R_+$.
\end{proposition}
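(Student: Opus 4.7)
The plan is to reduce an arbitrary local trivialization to a calibration-compatible one, identify the admissible form of the transition maps between any two such trivializations, and then take the maximal atlas to be the collection of all of them.

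First, I would establish local existence. Starting from any local $\Rt$-bundle trivialization $\zf:\zt^{-1}(U)\to U\ti\Rt$ with fiber coordinate $t$, the $f$-homogeneity of $\s$ with $f(\zn)=|\zn|$ forces $\s(x,\zn t)=|\zn|\cdot\s(x,t)$, so in particular $\s(x,t)=|t|\cdot c(x)$ with $c(x):=\s(x,1)>0$ smooth on $U$. The vertical rescaling $s:=c(x)\cdot t$ by the \emph{positive} function $c(x)$ is an admissible change of $\Rt$-bundle trivialization, and by construction the new fiber coordinate satisfies $|s|=\s|_U$.

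Second, I would determine the form of the transitions. Any change of $\Rt$-bundle trivialization over the identity of the base has the form $(x,s_\za)\mapsto(x,g(x)\cdot s_\za)$ for some smooth nowhere-vanishing $g$ on $U_\za\cap U_\zb$. When both coordinates satisfy $|s_\za|=|s_\zb|=\s$, the identity $|g(x)\cdot s_\za|=|s_\za|$ for every $s_\za\in\Rt$ forces $|g(x)|=1$, hence $g(x)\in\{+1,-1\}$. A smooth function into the two-point set $\{+1,-1\}$ is locally constant, so it is constant on every connected component of $U_\za\cap U_\zb$; this yields $\zf_{\za\zb}(x,s)=(x,\pm s)$ exactly as claimed.

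Third, I would define the atlas to consist of \emph{every} local trivialization of $P$ whose fiber coordinate satisfies $|s_\za|=\s|_{U_\za}$. The first step shows it covers $M$; it is manifestly closed under restriction to open subsets; and the second step verifies the required transition form. Any atlas satisfying the stated property is contained in this one, so this collection is the unique maximal atlas.

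The only real subtlety is the use of $\s(x,-1)=\s(x,1)$, which is automatic from the $f$-homogeneity with $f(\zn)=|\zn|$ (and, at root, from the quadratic nature of $\s$ as a VB-metric norm); this is precisely what leaves the $\s$-compatible fiber coordinate undetermined up to a global sign on each connected component, and it explains geometrically why the transition cocycle found in the second step takes values in $\{\pm 1\}\simeq\Z/2\Z$, i.e., why fixing a calibration amounts to reducing the structure group of $P$ from $\Rt$ to $\Z/2\Z$.
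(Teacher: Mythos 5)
Your proof is correct and complete; the paper itself states this proposition without proof (remarking only that one ``easily gets'' it, with references), and your three-step argument --- rescaling an arbitrary trivialization by the positive function $c(x)=\s(x,1)$ extracted from the homogeneity of $\s$, forcing $|g(x)|=1$ on overlaps so that the transition cocycle is locally constant with values in $\{\pm 1\}$, and declaring the atlas to be all $\s$-compatible charts --- is precisely the standard argument being invoked. Your closing observation is also the right one: it is the full $|\cdot|$-homogeneity $\s(x,-t)=\s(x,t)$, not mere positive homogeneity, that leaves the fiber coordinate determined only up to sign and hence reduces the structure group to $\Z/2\Z$, which is what Theorem \ref{calibration} and Corollary \ref{cr1} exploit.
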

\begin{definition}\label{def-cal} The atlas of local trivializations of $P$, described in Proposition \ref{phf}, we call the \emph{$\s$-atlas} and the fiber coordinates $s$ of the corresponding local trivializations (i.e., satisfying $|s|=\s$) \emph{$\s$-normal}.
\end{definition}
\begin{corollary}\label{cr1}
Let $(M,C)$ be a contact manifold, and let $(P,\zw)$ be its symplectic cover. If $\s$ is a calibration on $P$, then the pullback contact structure $p^*C$ on the 2-sheet cover $p:\tM=\tM_1\to M$ admits a canonical global contact form $\tzh$ which locally, on the two-sheets covering $U_\za$, is $\pm\zh_\za$. In other words, we can find an open covering  $\{U_\za\}$ of $M$ and local contact forms $\zh_\za$ inducing $C$ on $U_\za$ such that $\zh_\za=\pm \zh_\zb$ on $U_{\za\zb}=U_\za\cap U_\zb$.
The contact form $\tzh$ is the restriction to $\tM$ of the Liouville form $\zvy$ on $P$.
\end{corollary}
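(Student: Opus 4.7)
The plan is to read off the local contact forms and their sign-transition behavior directly from the $\s$-normal atlas provided by Proposition \ref{phf}, and then to construct $\tzh$ simply as the restriction of the globally defined Liouville 1-form $\zvy$ on $P$ to the horizontal leaf $\tM_1=\{\s=1\}\subset P$ singled out by Theorem \ref{calibration}.

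First, I invoke Proposition \ref{phf} to fix an atlas $\{U_\za\}_{\za\in\zL}$ of $M$ with $\s$-normal trivializations $\zf_\za:\zt^{-1}(U_\za)\to U_\za\ti\Rt$, fiber coordinates $s_\za$ satisfying $|s_\za|=\s|_{U_\za}$, and transition maps $(x,s)\mapsto(x,\zve_{\za\zb}(x)\,s)$ with $\zve_{\za\zb}\in\{\pm 1\}$ locally constant on $U_{\za\zb}$. On each $\zt^{-1}(U_\za)$, writing the $1$-homogeneous symplectic form as in (\ref{sffc}) produces a local contact form $\zh_\za$ on $U_\za$ with $C|_{U_\za}=\ker(\zh_\za)$, and the Liouville $1$-form reads $\zvy|_{\zt^{-1}(U_\za)}=s_\za\,\zh_\za$ (with $\zh_\za$ pulled back along $\zt$). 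This is exactly the section/contact-form correspondence of Proposition \ref{equiv} applied to the local section $s_\za=1$.

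Second, on overlaps $U_{\za\zb}$, the globality of the intrinsic $1$-form $\zvy$ forces $s_\za\,\zh_\za=s_\zb\,\zh_\zb$, and substituting $s_\zb=\zve_{\za\zb}\,s_\za$ yields $\zh_\za=\zve_{\za\zb}\,\zh_\zb$ on $U_{\za\zb}$, which is the sign-only cocycle claimed in the corollary. Third, I set $\tzh:=\zvy|_{\tM_1}$; this is manifestly canonical since $\zvy$ is. Over each $U_\za$, the preimage $p^{-1}(U_\za)\cap\tM_1$ splits into the two local sheets $\{s_\za=1\}$ and $\{s_\za=-1\}$, on which $\tzh$ becomes $+\zh_\za$ and $-\zh_\za$ respectively after identifying each sheet with $U_\za$ via $p=\zt|_{\tM_1}$. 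Consequently $\tzh$ is locally of the form $\pm\zh_\za$, so $\tzh\we(\xd\tzh)^n\ne 0$ and $\ker(\tzh)=p^*C$, confirming that $\tzh$ is a global contact form representing $p^*C$.

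There is no genuine obstacle: the whole argument is a direct assembly of Proposition \ref{phf} (giving the sign-only atlas), the local Darboux-type formula (\ref{sffc}) for $\zw$ and $\zvy$, and Theorem \ref{calibration} (providing the leaf $\tM_1$). The only point to keep in check is that the sign decorating $\pm\zh_\za$ on the two local sheets is dictated canonically by the sign of $s_\za$, which is precisely what makes $\tzh$ well defined globally and independent of the trivialization.
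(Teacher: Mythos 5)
Your proof is correct and follows exactly the route the paper intends: the corollary is stated without an explicit proof precisely because it is the assembly of Proposition \ref{phf} (the sign-only $\s$-atlas), the local formula (\ref{sffc}) giving $\zvy=s_\za\zh_\za$, and Theorem \ref{calibration} providing the leaf $\tM_1$. Your verification that globality of the intrinsic Liouville form forces $\zh_\za=\pm\zh_\zb$ on overlaps, and that $\tzh=\zvy|_{\tM_1}$ restricts to $\pm\zh_\za$ on the two local sheets, is exactly the intended argument.
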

\begin{remark}\label{paired}
We can also view $\tzh$ as a horizontal submanifold of $\sT^*M$ being a 2-sheet cover of $M$, In other words, $\tzh$ is a section of the fiber bundle
$$|\sT^*M|=\big(\sT^*M\big)^\ti/\Z_2\to M.$$
Such sections we will call \emph{paired sections} of $\sT^*M$; they are nonvanishing by definition. Similarly, the Reeb vector field $\tzx$ of $\tzh$ can be viewed as a paired vector field $|\zx|$ on $M$. Of course, this definition makes sense for any vector bundle $E\to M$ or, more generally, for any fiber bundle with a canonical action of $\Z_2$ in fibers. In \cite{Grabowski:2012a,Grabowski:2006} it was applied to the case of line bundles to show that paired sections exist for all line bundles.

\mn More generally, we can consider \emph{paired structures} on a manifold $M$ which are locally represented by pairs of structures  up to a sign. More precisely, if a certain geometric structure is locally represented by a system of nonvanishing tensor fields $T_\za=(T_\za^1,\dots,T_\za^k)$ satisfying certain compatibility conditions which are invariant with respect to the sign change, then the family of local pairs $\{T_\za,-T_\za\}$ which coincide on the intersections of charts defines a paired structure. Such a paired structure we will denote $|T|=|T^1,\dots,T^k|$. This is generally different from $(|T^1|,\dots,|T^k|)$. It is clear that any paired structure $|T|$ on $M$ corresponds to a standard structure $T$ on a 2-sheet cover $\tM$ of $M$.
\end{remark}

\begin{example}\label{epaired}
Let $\zt:E\to M$ be a vector bundle, and let $\GL(E)\to M$ be the bundle of linear automorphisms of fibers, i.e., the subbundle in $\Hom(E;E)=E^*\ot_M E\to M$ consisting of linear automorphisms. It is a bundle of Lie groups. Let $w(x)\in\R[x]$ be a polynomial of the form $w(x)=1+a_1x^1+\cdots+a_nx^{n}$. A \emph{$w$-structure} on $E$ is a section $\phi:M\to\GL(E)$ such that $w\circ\phi=0$. If $w$ is an even function, $w(x)=w(-x)$, then a \emph{paired $w$-structure} is a section $|\phi|:M\to|GL(E)|$ such that $w\circ|\phi|=0$, where $|\GL(E)|\to M$ is the quotient bundle $\GL(E)/\Z_2$, with $\Z_2$ interpreted as the normal subgroup $\{\pm\,\id\}$ in each fiber.

In this way, we can obtain a \emph{paired complex structure on $E$}, with $w(x)=1+x^2$, or a \emph{paired product structure} with $w(x)=1-x^2$. The definition makes sense, as any paired $w$-structure is locally represented by a pair $\pm\,\phi$ of $w$-structures on the vector bundle $E$ and $w$ is insensitive to the change of sign in the argument. Note that paired complex structures can live on a wider class of vector bundles than complex structures; the bundles may be non-orientable, i.e., $\zL^{top}E\to M$ may be a nontrivial line bundle.
\end{example}
\section{Levi and Sasaki structures}
Let $M$ be a cooriented contact manifold with a contact form $\eta$, and let $\zx$ be the corresponding Reeb vector field,
\begin{align*}
    \iota_\zx\eta=1,\,\,  \text{and} \,\,\, \iota_\zx \xd\eta=0.
\end{align*}
Consequently, the tangent bundle $\sT M$ has the decomposition
\begin{align*}
    \sT M=\langle\zx\rangle \oplus C,
\end{align*}
where $C=\ker(\zh)$ is the \emph{contact distribution} and $\langle\zx\rangle$ is the line subbundle generated by $\zx$.
\begin{definition}\label{g-as} A Riemannian metric $g$ on $(M,\zh)$ is an \emph{associated metric} for the contact form $\zh$ if, {for all vector fields $X,Y$ on $M$},
\[
    \eta(X)=g(X,\zx),
\]
and there exists an endomorphism $\phi$ of $\sT M$ satisfying $\phi^2=-\id_{\sT M} + \eta\otimes\zx$ and
$$\xd\eta\big(X,Y\big)=g\big(X,\phi(Y)\big).$$
We refer to $(\phi,\zx,\zh,g)$ as a \emph{contact metric structure} and to $M$ with such a structure as a \emph{contact metric manifold}.
\end{definition}
\no In particular, the contact subbundle is orthogonal to the Reeb vector field, $\phi(\zx)=0$, and $\phi(C)=C$, so $\phi$ can be seen as an endomorphism $\pc$ of $C=\ker(\phi)$, satisfying $\pc^2=-\id_C$, and extended trivially to the whole $\sT M=\la \zx\ran\op C$. We will often use this point of view.
\begin{remark}
Note that this definition strongly depends on the choice of $\zh$ in the conformal class of $\zh$, so it has no clear meaning in the contact distributional setting.
\end{remark}
\no Except for contact and contact metric structures, there are a lot of papers in the literature devoted to many such `almost' structures and, in our opinion, cause some confusion, as logic rules for terminology are not respected.
\begin{definition}
Let $M$ be an odd-dimensional manifold. An \emph{almost contact structure} on $M$ consists of the following:
\begin{enumerate}
    \item a $1$-form $\eta$,
    \item a vector field $\xi$,
    \item an endomorphism $\phi:\sT M\to\sT M$,
\end{enumerate}
such that they satisfy $\iota_\xi\eta=1$ and
$$\phi^2=-\id_{\sT M} + \eta\otimes\zx.$$
A manifold $M^{2n+1}$ endowed with an almost contact structure $(\phi, \xi, \eta)$ is called an \emph{almost contact manifold}.
\end{definition}
\begin{remark}
Generally, the phrase `almost A' usually suggests that the object in question is `not quite A' and, logically, `A' is always `almost A'. The concept of an almost contact structure does not satisfy this logic rule: a contact manifold is generally not almost contact, as there is no canonical $\phi$ associated with a contact structure. On the other hand, such $\phi$ that makes a contact manifold into an almost contact one always exists. In any case, the name `almost contact' has historical origins and is commonly used in the above sense, so we will respect this convention.
\end{remark}
\no Before we discuss the traditional approaches to the concept of a Sasakian manifold, we briefly recall the relation between contact geometry and CR geometry; for more details, see \cite{Blair:2010, Boyer:2008, Ianus:1972, Ornea:2007, Ornea:2024}.

\mn Let us recall that a $(1,1)$ tensor $J\in E^*\ot_ME$ on a vector bundle $\zp:E\to M$ is called a \emph{complex structure} on the vector bundle $E$ if $J^2=-\id_E$. A complex structure on the tangent bundle $\sT M$ is called an \emph{almost complex structure on $M$}, and it is called a \emph{complex structure on $M$} if it is \emph{integrable}.

The integrability of $J$ can be characterized by the celebrated result of Newlander and Nirenberg \cite{Newlander:1957} as the vanishing of the \emph{Nijenhuis torsion} $N_J$ of $J$,
\be\label{Nt0}N_J(X,Y)=[JX,JY]-J\big([JX,Y]+[X,JY]-J[X,Y]\big),\ee
which for $J^2=-\id$ takes the form
\be\label{Nt}\cN_J(X,Y)=\big([JX,JY]-[X,Y]\big)-J\big([JX,Y]+[X,JY]\big).\ee
Let us stress that we will define the torsion $\cN_\phi$ also for $\phi:C\to C$, $\phi^2=-\id_C$, where $C$ is not the whole $\sT M$ but a distribution. Actually, this is the concept of a \emph{CR structure}, introduced in 1968 by Greenfield \cite{Greenfield:1968}, which came from an attempt to develop a complex-like geometry for general distributions $C$ replacing $\sT M$.

\begin{definition}\label{crdef}
If $M$ is a smooth manifold and $\mathcal{H}$ is a complex subbundle of the complexified tangent bundle $\sT^\C M$ such that $\mathcal{H}\cap\overline{\mathcal{H}}={0}$, then the pair $(M,\mathcal{H})$ is called an \emph{almost CR manifold} if $\cH\op\ol\cH$ is involutive, and \emph{integrable} (or a \emph{CR manifold}) if $\mathcal{H}$ is involutive.
\end{definition}
\no Alternatively, we can start with a subbundle $C$ of the tangent bundle $\sT M$, together with a complex  structure $\phi_C$  on $C$, $\phi_C^2=-\id_C$. Then, $(C, \phi_C)$ is an \emph{almost CR structure} if
\be\label{acr}[\pc(X),\pc(Y)]-[X,Y]\in C\ee
for all $X,Y\in C$. Since $\pc^2=-\id_C$, it is easy to see that (\ref{acr}) is equivalent to
\be\label{acr1}[\pc(X),Y]+[X,\pc(Y)]\in C.\ee
An almost CR structure is called \emph{integrable} (or a \emph{CR structure}) if, additionally, the tensor $\cN_\pc$ vanishes. This is the definition we will use in the sequel. Of course, the tensor $\cN_\pc$ is well-defined only if $\pc$ is an almost CR structure.
Note that some authors understand almost CR structures $(C,\pc)$ as simply complex structures on the vector bundle $C$, $\pc^2=-\id_C$.

The relation to Definition \ref{crdef} is clear: one can define $\mathcal{H}=\{X-i\pc(X)\,|\,X\in C\}$, and it is easy to show that $\mathcal{H}\cap\overline{\mathcal{H}}={0}$, and that $\mathcal{H}$ is involutive if and only if $\cN_\pc=0$. Conversely, viewing $\sT M$ as canonically embedded into $\sT^\C M$, we have $C=\cH\cap\sT M$ and $\cH$ (or $\overline{\mathcal{H}}$) equals $\{X-i\pc(X)\,|\,X\in C\}$.
The case $\dim(M)=2n+1$ with a subbundle $C$ of rank $2n$ is particularly interesting to us, as it is related to contact distributions. Starting from an almost contact structure $(\phi,\xi,\eta)$ on $M$ and restricting $\phi$ to the subbundle $C$, one obtains a complex structure $\pc=\phi|_C$ on $C$.
\begin{remark}\label{pcr}
Instead of almost CR structures on $C$ we can consider \emph{paired almost CR structures} (cf. Example \ref{epaired}) represented locally by pairs of almost CR structures $\{\pc,-\pc\}$. This concept is correct, as $\big(\pm\pc\big)^2=-\id_C$ and the condition (\ref{acr}) is insensitive to the sign before $\pc$. In the complex setting, we have a pair of complementary subbundles locally, $\{\cH,\ol{\cH}\}$ of $C^\C=C\ot\C\subset\sT^\C M$,
$$C\ot\C=\cH\op\ol{\cH}=\ol{\cH}\op\cH.$$
We want to make clear that this pair is unordered, $\ol{\ol{\cH}}=\cH$. The relation to the previous setting is by putting
$$\cH=\{X\pm i\pc(X)\,|\,X\in C\}.$$
Also, the concept of integrability works for paired CR structures, since (\ref{Nt0}) and (\ref{Nt}) are sign-insensitive.

This `paired' concept is related to the fact that $i=\sqrt{-1}$ is by no means uniquely determined, as we are unable to distinguish between two square roots of $-1$ in the field of complex numbers. Consequently, holomorphic and anti-holomorphic functions in complex analysis are exchangeable. The distinction we use comes from the model $\C=\R\op i\R$. Paired complex structures on vector bundles (maybe \emph{local complex structures} would be a better name) form a weaker concept (complex analysis makes sense only locally), but they can work for non-orientable vector bundles, { i.e., vector bundles $E$ such that $\we^{top}E$ is a trivializable line bundle. Complex structures live on orientable vector bundles only.}
\end{remark}

\mn Let $\pc$ be a complex structure on a corank $1$ distribution $C$ and $C=\ker(\eta)$, where $\eta$ is a nowhere-vanishing $1$-form. {Note that this condition implies that the line bundle $\sT M/C$ is trivializable.} The \emph{Levi form} $g_C$ is defined by
\be\label{Levi}g_C(X,Y)=\xd\eta\big(X,\pc(Y)\big),\ee
and if it is nondegenerate, then $\eta$ is a contact form.
\begin{proposition}\label{pin} The following are equivalent:
\begin{description}
\item{(a)} The 2-form $\xd\zh$ is $\pc$-invariant,
$\xd\zh\big(\pc(X),\pc(Y)\big)=\xd\zh(X,Y)$;
\item{(b)} The Levi form $g_C$ is symmetric, $g_C(X,Y)=g_C(Y,X)$;
\item{(c)} The Levi form $g_C$ is $\pc$-invariant,
$g_C\big(\pc(X),\pc(Y)\big)=g_C(X,Y)$;
\item{(d)} $(C,\pc)$ is an almost CR structure.
\end{description}
\end{proposition}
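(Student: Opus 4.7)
The plan is to use the globally defined contact form $\zh$ (we are in the cooriented setting, $C=\ker\zh$), together with three elementary facts: the Cartan formula for $\xd\zh$, the skew-symmetry of $\xd\zh$, and the identity $\pc^2=-\id_C$. I will treat (a) as the hub and establish (a)$\Leftrightarrow$(d), (a)$\Leftrightarrow$(b), and (b)$\Leftrightarrow$(c) by short direct manipulations. No analytic machinery is needed; the only care required is sign bookkeeping, which is the main (and only) potential pitfall.

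For (a)$\Leftrightarrow$(d), I would invoke the Cartan formula
\[
\xd\zh(U,V)=U\zh(V)-V\zh(U)-\zh([U,V]).
\]
If $U,V\in C=\ker\zh$ then the first two terms vanish, leaving $\xd\zh(U,V)=-\zh([U,V])$. Since $\pc$ preserves $C$, the same identity applies to the pair $\pc X,\pc Y$. Hence (a), namely $\xd\zh(\pc X,\pc Y)=\xd\zh(X,Y)$, is equivalent to $\zh([\pc X,\pc Y])=\zh([X,Y])$, i.e.\ $[\pc X,\pc Y]-[X,Y]\in\ker\zh=C$, which is exactly (d).

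For (a)$\Leftrightarrow$(b), I would substitute $Y\mapsto\pc Y$ in (a); using $\pc^2=-\id_C$ this yields $-\xd\zh(\pc X,Y)=\xd\zh(X,\pc Y)=g_C(X,Y)$, so $g_C(X,Y)=-\xd\zh(\pc X,Y)$. Skew-symmetry of $\xd\zh$ then gives $g_C(X,Y)=\xd\zh(Y,\pc X)=g_C(Y,X)$, which is (b). Conversely, (b) rewritten with the defining formula reads $\xd\zh(X,\pc Y)=\xd\zh(Y,\pc X)=-\xd\zh(\pc X,Y)$, i.e.\ $\xd\zh(X,\pc Y)+\xd\zh(\pc X,Y)=0$; replacing $X$ by $\pc X$ and invoking $\pc^2=-\id_C$ produces $\xd\zh(\pc X,\pc Y)=\xd\zh(X,Y)$, i.e.\ (a).

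For (b)$\Leftrightarrow$(c), I would compute, assuming (b) (and hence the identity $g_C(X,Y)=-\xd\zh(\pc X,Y)$ derived above),
\[
g_C(\pc X,\pc Y)=\xd\zh(\pc X,\pc^2 Y)=-\xd\zh(\pc X,Y)=\xd\zh(Y,\pc X)=g_C(Y,X)=g_C(X,Y),
\]
which is (c). Reading this chain backwards, (c) forces $g_C(X,Y)=g_C(Y,X)$, giving (b). Altogether the four conditions are equivalent. The only delicate point throughout is managing the signs produced by $\pc^2=-\id_C$ and by the skew-symmetry of $\xd\zh$; this is routine, so I do not anticipate any genuine obstacle.
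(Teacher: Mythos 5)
Your proof is correct and uses essentially the same ingredients as the paper's: the identity $\xd\zh(U,V)=-\zh([U,V])$ for $U,V\in C$, the skew-symmetry of $\xd\zh$, and $\pc^2=-\id_C$. The only difference is organizational --- you prove three explicit biconditionals with (a) as a hub, whereas the paper closes a cycle (a)$\Rightarrow$(b)$\Rightarrow$(c)$\Rightarrow$(d)$\Leftrightarrow$(a) --- and both are complete.
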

\begin{proof}
(a) $\Rightarrow$ (b)  We have
$$g_C(X,Y)=\xd\zh\big(X,\pc(Y)\big)=\xd\zh\big(-\pc(X),Y\big)
=\xd\zh\big(Y,\pc(X)\big)=g_C(Y,X).$$

 (b) $\Rightarrow$ (c) follows from
\beas &g_C\big(\pc(X),\pc(Y)\big)=g_C\big(\pc(Y),\pc(X)\big)=\xd\zh\big(\pc(Y),-X\big)\\
&=\xd\zh\big(X,\pc(Y)\big)
=g_C(Y,X).\eeas

 (c) $\Rightarrow$ (d) We have
\beas &\zh\big([X,\pc(Y)]\big)=-\xd\zh\big(X,\pc(Y)\big)=-g_C\big(X,Y)\big)\\
&=-g_C\big(\pc(X),\pc(Y)\big)=\xd\zh\big(\pc(X),Y)\big)=-\zh\big([\pc(X),Y]\big),
\eeas
so
$$\zh\big([X,\pc(Y)]+[\pc(X),Y]\big)=0$$
and
$$[X,\pc(Y)]+[\pc(X),Y]\in C.$$

\mn The equivalence (d) $\Leftrightarrow$ (a) follows from
$$\zh\big([\pc(X),\pc(Y)]-[X,Y]\big)=\xd\zh(X,Y)-\xd\zh\big(\pc(X),\pc(Y)\big)=0.$$
\end{proof}
\no A complex tensor $\pc$ on $C=\ker(\zh)$ is called \emph{strictly pseudoconvex} if the Levi form is symmetric, positive, or negative definite. Proposition \ref{pin} immediately implies that in this case $(C,\pc)$ is an almost CR structure. The strict pseudoconvexity does not depend on the choice of $\zh$ in its conformal class; however, the Levi form does. To simplify the terminology, we propose the following definition.
\begin{definition}\label{Ls}
A \emph{Levi structure} on a cooriented contact manifold $(M,\zh)$ is a complex structure $\pc$ on the contact distribution $C=\ker(\zh)$ such that the Levi form (\ref{Levi}) is symmetric and positive definite. If $\pc$ is additionally a CR structure ($\pc$ is integrable), we will speak about an \emph{integrable Levi structure}. The structures $(M,\zh,\pc)$ we then call \emph{(integrable) Levi manifolds}.
\end{definition}
\begin{remark}\label{rmain}
Our integrable Levi structures are sometimes called in the literature \emph{strictly pseudoconvex CR structures}. This terminology comes from the complex geometry. We want to indicate the Levi structure, since it is fundamental in our framework.

\mn The endomorphism $\pc$ acts on $C$, but in the presence of $\zh$ we have the canonical splitting $\sT M=\la\zx\ran\op C$, where $\zx$ is the Reeb vector field. As $\zx$ is a contact vector field, $[\zx,C]\subset C$. The canonical associated metric is then
\be\label{Lmetric} g_M=\zh\ot\zh+\gc,\ee
where $\gc$ is the unique trivial extension of $g_C$ to a symmetric tensor on $M$ satisfying $i_\zx\gc=0$. These are exactly the metrics considered in the traditional approaches to Sasaki manifolds. This immediately implies that our Levi structures are nothing but contact metric structures from Definition \ref{g-as}. As we are working exclusively with contact structures, in our approach we will ignore almost contact structures and use the simplified notation $(\zh,\pc)$, instead of $(\phi,\zx,\zh,g)$ appearing in Definition \ref{g-as}, as the superfluous parts $\zx,g$ are completely determined by $\zh$ and $\pc$. We will later work with slightly more general metrics on contact manifolds.
\end{remark}
\begin{proposition}\label{Killing} Let $(M,\zh,\pc)$ be a Levi structure, and $\bp:\sT M\to\sT M$ be the extension of $\pc$ satisfying $\bp(\zx)=0$. Then,
$\cN_\bp=0$ if and only if $\pc$ is integrable and, additionally,
\be\label{Kcond}
[\zx,\pc(X)]=\pc\big([\zx,X]\big)
\ee
for every vector field $X\in C$. The condition (\ref{Kcond}) can be rewritten as $\Ll_\zx\bp=0$ and is equivalent to the fact that the $\zx$ is a Killing vector field for the Levi metric (\ref{Lmetric}).
\end{proposition}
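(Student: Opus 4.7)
The plan is to decompose $\sT M = \la\zx\ran \op C$ and evaluate the formula
\[
\cN_\bp(U, V) = [\bp U, \bp V] - [U, V] - \bp\bigl([\bp U, V] + [U, \bp V]\bigr)
\]
on the three pair types $(\zx,\zx)$, $(\zx, Y)$ with $Y \in C$, and $(X, Y)$ with $X, Y \in C$. Two basic facts I will use throughout are $\bp\zx = 0$ by definition, and $[\zx, C] \subset C$, which follows from $\Ll_\zx\zh = \iota_\zx\xd\zh + \xd(\iota_\zx\zh) = 0$ and hence $\zh([\zx, Y]) = 0$ for $Y \in C$.

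The case $(\zx,\zx)$ is trivially zero. For $(\zx, Y)$ with $Y \in C$, two of the four terms vanish since $\bp\zx = 0$, and using $[\zx, \pc Y] \in C$ we obtain
\[
\cN_\bp(\zx, Y) = -[\zx, Y] - \pc[\zx, \pc Y] \in C.
\]
Applying $\pc$ and using $\pc^2 = -\id_C$, this vanishes iff $[\zx, \pc Y] = \pc[\zx, Y]$. For $(X, Y)$ with $X, Y \in C$, the almost CR property of $(C, \pc)$ (Proposition \ref{pin}(d)), inherent to the Levi structure assumption, gives $[\pc X, Y] + [X, \pc Y] \in C$, so $\bp$ acts on it as $\pc$, yielding $\cN_\bp(X, Y) = \cN_\pc(X, Y)$; its vanishing for all such $X, Y$ is precisely the integrability of the CR structure $(C, \pc)$. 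Combining these three cases proves the first equivalence. The rewriting $\Ll_\zx\bp = 0$ is then immediate: on $C$, $(\Ll_\zx\bp)(Y) = [\zx, \pc Y] - \pc[\zx, Y]$, and $(\Ll_\zx\bp)(\zx) = [\zx, 0] - \bp[\zx,\zx] = 0$ automatically.

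For the Killing characterization, decompose $g_M = \zh\ot\zh + g_C$ along $\sT M = \la\zx\ran \op C$. Since $\Ll_\zx\zh = 0$ and $[\zx, C] \subset C$ with $g_C$ vanishing on $\zx$, one checks that $(\Ll_\zx g_M)(\zx, \cdot) = 0$, so the Killing condition reduces to $\Ll_\zx g_C = 0$ on $C \times C$. Using $g_C(X, Y) = \xd\zh(X, \pc Y)$ and $\Ll_\zx\xd\zh = \xd\Ll_\zx\zh = 0$, a direct Leibniz computation yields
\[
(\Ll_\zx g_C)(X, Y) = \xd\zh\bigl(X, (\Ll_\zx\bp)(Y)\bigr).
\]
Since $(\Ll_\zx\bp)(Y) \in C$ and $\xd\zh|_C$ is non-degenerate by the contact condition, this vanishes for all $X, Y \in C$ iff $\Ll_\zx\bp = 0$. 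The only delicate issue is the bookkeeping of $\zx$- versus $C$-components inside each Lie bracket, which is entirely controlled by $\bp\zx = 0$ and $[\zx, C] \subset C$.
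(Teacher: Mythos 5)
Your proof is correct and follows essentially the same route as the paper's: a case-by-case evaluation of $\cN_\bp$ on the splitting $\sT M=\la\zx\ran\op C$ for the first equivalence, and the Leibniz identity $\Ll_\zx g_C=\xd\zh\circ(\id\ot\Ll_\zx\bp)$ combined with the nondegeneracy of $\xd\zh$ on $C$ for the Killing characterization. The only difference is that you spell out steps the paper labels as obvious, e.g.\ why $\cN_\bp$ agrees with $\cN_\pc$ on $C\times C$ via the almost CR property from Proposition \ref{pin}(d).
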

\begin{proof}
Obviously, $\cN_\bp=0$ implies immediately $\cN_\pc=0$. Since $\bp(\zx)=0$, we get from (\ref{Nt})
$$\cN_\bp(X,\zx)=[\zx,X]+\bp\big([\zx,\bp(X)]\big)=0$$
for all vector fields $X\in C$, which is another form of (\ref{Kcond}). The converse is obvious. We have
$$\bp\big([\zx,X]\big)-[\zx,\bp(X)]=\bp\big(\Ll_\zx X\big)-\Ll_\zx\big(\bp(X)\big)=-\big(\Ll_\zx\bp\big)(X),
$$
so $\Ll_\zx\bp=0$. As the Reeb vector field respects $\zh$, we have $\Ll_\zx\zh^2=0$, so it remains to show the equivalence of (\ref{Kcond}) with $\Ll_\zx g_C=0$. We have
\begin{align*}&\Ll_\zx g_C=\Ll_\zx\big(\xd\zh\circ(\id\ot\bp)\Big)
=(\Ll_\zx\xd\zh)\circ\big(\id\ot\bp\big)+\xd\zh\circ(\id\ot\Ll_\zx\bp)\\
&=\xd\zh\circ(\id\ot\Ll_\zx\bp)=0,
\end{align*}
and our statement follows, as $\xd\zh$ is nondegenerate on $C$.
\end{proof}
\no Note that $\zx$ is a contact vector field, $[\zx,C]\subset C$, so it makes also sense to write $\Ll_\zx\pc$ instead of $\Ll_\zx\bp$, etc.
\begin{definition}
We call a Levi structure $(M,\zh,\pc)$ a \emph{Sasakian structure} if $\cN_\pc=0$.
Paired Sasakian structures are defined in an obvious way.
\end{definition}
\no Proposition \ref{Killing} implies that, in the case of Sasakian structures, the Reeb vector field is a Killing vector field for the Levi metric $g_M$ on $M$.

\mn One could ask what the Nijenhuis torsion $N_\bp$ is. It is easy to see that, for $X,Y\in C$,
\be\label{Ncc}N_\bp(X,Y)-\cN_\pc(X,Y)=\big(\id+\bp^2\big)\big([X,Y]\big)=\zh\big([X,Y]\big)\zx=-\xd\zh(X,Y)\zx.\ee
This way, we end up with a traditional definition of a Sasaki manifold.
\begin{corollary}\label{frueN}
A Levi structure $(M,\zh,\pc)$ is Sasakian if and only if, for every vector fields $X,Y\in C$,
\be\label{Nij}
N_\bp(X,Y)+\xd\zh(X,Y)\zx=0.
\ee
\end{corollary}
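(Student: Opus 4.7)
The plan is to combine the algebraic identity $N_\bp(X,Y) - \cN_\pc(X,Y) = -\xd\zh(X,Y)\zx$, valid for $X, Y \in C$ and highlighted just before the statement, with the definition of a Sasakian structure and Proposition \ref{Killing}. First, I would verify the identity directly. Expanding $N_\bp(X,Y) = [\bp X, \bp Y] - \bp[\bp X, Y] - \bp[X, \bp Y] + \bp^2[X,Y]$, and using that $\bp X = \pc X$, $\bp Y = \pc Y$ together with the almost CR property from Proposition \ref{pin} (so $[\pc X, Y] + [X, \pc Y] \in C$ and $\bp$ acts as $\pc$ on this sum), the formula collapses to $\cN_\pc(X,Y) + (\id + \bp^2)[X,Y]$. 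Decomposing $[X,Y] = Z_C + \zh([X,Y])\zx$ with $Z_C \in C$ and applying $\bp^2 Z_C = -Z_C$, $\bp^2\zx = 0$, I would obtain $(\id + \bp^2)[X,Y] = \zh([X,Y])\zx = -\xd\zh(X,Y)\zx$, using $\zh(X) = \zh(Y) = 0$.

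With this identity in hand, condition (\ref{Nij}) restricted to $X, Y \in C$ is equivalent to $\cN_\pc \equiv 0$, i.e., to the almost CR structure $(C, \pc)$ being integrable. Since the Sasakian condition is, by definition, $\cN_\bp = 0$, and Proposition \ref{Killing} decomposes this into integrability of $(C, \pc)$ together with the Reeb-Killing condition (\ref{Kcond}), the forward direction is immediate. For the converse, I would extend the test to mixed pairs: evaluating $N_\bp(X, \zx)$ for $X \in C$, and using $\bp\zx = 0$ with $[\zx, C] \subset C$, one gets $N_\bp(X, \zx) = \pc[\zx, \pc X] + [\zx, X]$, while $\xd\zh(X, \zx) = 0$; hence (\ref{Nij}) applied to $(X, \zx)$ is precisely (\ref{Kcond}), supplying the missing Killing ingredient.

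The main obstacle will be the scope of (\ref{Nij}): as written, the statement tests the identity only for $X, Y \in C$, which captures merely the CR-integrability of $\pc$. The Killing half of the Sasakian condition emerges only when (\ref{Nij}) is read as a tensor identity on all of $\sT M$ and applied to pairs involving the Reeb vector field; once this is clarified and combined with Proposition \ref{Killing}, the equivalence with $\cN_\bp = 0$ is complete. Apart from this point, the work is algebraic bookkeeping around the decomposition $\sT M = \langle \zx \rangle \oplus C$ and the extension of $\pc$ to $\bp$ by $\bp\zx = 0$.
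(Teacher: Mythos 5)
Your argument is correct and, on $C\ti_M C$, is exactly the paper's computation: the identity $N_\bp(X,Y)-\cN_\pc(X,Y)=(\id+\bp^2)([X,Y])=-\xd\zh(X,Y)\zx$ for $X,Y\in C$ (using the almost CR property from Proposition \ref{pin} so that $\bp$ and $\pc$ agree on $[\pc(X),Y]+[X,\pc(Y)]$) is precisely the displayed formula preceding the corollary, and the splitting of the Sasakian condition $\cN_\bp=0$ into CR-integrability plus (\ref{Kcond}) is Proposition \ref{Killing}.

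The scope issue you raise is a genuine defect of the statement, not of your proof. Read literally, with $X,Y\in C$, condition (\ref{Nij}) is equivalent to $\cN_\pc=0$ alone, and for a Levi (contact metric) structure this is strictly weaker than $\cN_\bp=0$: contact metric manifolds whose underlying almost CR structure is integrable but which fail (\ref{Kcond}) do exist (the standard contact metric structure on the unit tangent bundle of a space form of curvature $c\ne 1$ is the classical example), so the ``if'' direction fails as stated. The paper's one-line justification establishes only the equivalence of (\ref{Nij}) restricted to $C\ti_M C$ with $\cN_\pc=0$ and never recovers the Killing half. Your evaluation $N_\bp(X,\zx)=\pc([\zx,\pc(X)])+[\zx,X]$, together with $\xd\zh(X,\zx)=0$, supplies exactly the missing piece: requiring (\ref{Nij}) for all vector fields $X,Y$ on $M$ --- which is how the tensor $N^1$ is defined in (\ref{4ten}) and how the classical normality condition is stated --- is equivalent to $\cN_\pc=0$ together with (\ref{Kcond}), hence to the Sasakian condition by Proposition \ref{Killing}. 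The corollary is therefore correct once the restriction ``$X,Y\in C$'' is dropped, and your proof establishes that corrected statement.
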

\begin{remark}
In 1961, Sasaki \cite{Sasaki:1961} defined what we just called a \emph{Sasakian structure}. It is common in mathematics that, after some studies on an introduced concept, the original definition is transformed into a simpler and more appealing form. Since Sasaki's initial paper, many discovered properties have been used to rephrase the definition of a Sasakian manifold, and we shall mention a few in this article.
\end{remark}
\no Let $(M,\zh,\pc)$ be a Levi manifold. Any vector field on the product manifold $\cM=M\ti\R$ can be written as $(X,f\pa_t)$, where $t$ is the coordinate on $\R$, $X$ is a vector field tangent to the foliation $t=const$, and $f$ is a smooth function on $\cM$. Define a tensor $J$ on $\cM$ by
\be\label{nJ}
   J\Big(X, f\pa_t\Big)=\Big(\bp(X)-f\xi, \eta(X)\pa_t\Big).
\ee
It can be shown that the tensor $J$ is an almost complex structure on $\cM$. If $J$ is integrable, the Levi structure is called \emph{normal}. We have the following (cf. \cite{Ianus:1972}).
\begin{theorem}\label{Ianus}
A Levi structure $(M,\zh,\pc)$ is normal if and only if it is integrable (Sasakian).
\end{theorem}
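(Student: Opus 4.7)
The plan is to compute the Nijenhuis torsion $\cN_J$ of the almost complex structure $J$ defined in (\ref{nJ}) on $\cM = M\ti\R$, decompose its output along the canonical splitting $\sT\cM = \sT M \oplus \la\pa_t\ran$, and then match the equation $\cN_J = 0$ with the Sasakian condition $\cN_\bp = 0$ by invoking Corollary \ref{frueN} for the CR/integrability content and Proposition \ref{Killing} for the Killing content. Since $J^2=-\id$, the full Nijenhuis torsion coincides with $\cN_J$ as given by (\ref{Nt}), and because $\cN_J$ is tensorial, it suffices to evaluate it on the spanning family $\{\pa_t\}\cup\{X:X\text{ is a trivially extended vector field on }M\}$, for which $[X,\pa_t]=0$ automatically.

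The first main calculation is the horizontal--horizontal case: for such $X,Y$ one has $JX=\bp(X)+\zh(X)\pa_t$, and a Leibniz-rule expansion of $[JX,JY]$, $[JX,Y]$, $[X,JY]$, followed by repeated use of $\zh\circ\bp=0$ and the Cartan identity $\xd\zh(U,V)=U(\zh(V))-V(\zh(U))-\zh([U,V])$, should collapse $\cN_J(X,Y)$ to
\[
\cN_J(X,Y)=\Big(N_\bp(X,Y)+\xd\zh(X,Y)\,\zx\Big)+\Big(\xd\zh(\bp X,Y)+\xd\zh(X,\bp Y)\Big)\pa_t.
\]
The crucial observation is that on any Levi structure the $\pa_t$-coefficient vanishes identically: by Proposition \ref{pin}(c), $\xd\zh$ is $\bp$-invariant on $C$, so $\xd\zh(\bp X,Y)+\xd\zh(X,\bp Y)=0$ for $X,Y\in C$, while the remaining cases with $\zx$ in one slot vanish because $\bp\zx=0$ and $i_\zx\xd\zh=0$.

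The second calculation handles the mixed pair $(\pa_t,Y)$. From $J\pa_t=-\zx$ and $[\pa_t,JY]=0$, a short expansion together with the identity $\zx(\zh(Y))=\zh([\zx,Y])$ (which is just $\Ll_\zx\zh=0$) kills the $\pa_t$-component and yields the clean formula $\cN_J(\pa_t,Y)=-(\Ll_\zx\bp)(Y)$, purely tangent to $M$. Combining both cases, $\cN_J=0$ is equivalent to the pair of conditions (i) $N_\bp(X,Y)+\xd\zh(X,Y)\zx=0$ for all $X,Y\in\sT M$ and (ii) $\Ll_\zx\bp=0$. To finish, (ii) is exactly the content of Proposition \ref{Killing}; condition (i) restricted to $X,Y\in C$ is precisely the formulation of CR-integrability from Corollary \ref{frueN}; and for $X=\zx$ condition (i) is automatic from (ii), since $N_\bp(\zx,Y)=-\bp(\Ll_\zx\bp)(Y)$ and $\xd\zh(\zx,Y)=0$. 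Hence (i)$+$(ii) $\Leftrightarrow$ $\cN_\bp=0$, i.e., $J$ is integrable if and only if $(M,\zh,\pc)$ is Sasakian.

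The main obstacle I expect is the careful bracket bookkeeping in the horizontal--horizontal case; beyond the mechanical computation, the conceptual key is recognizing that the $\pa_t$-component of $\cN_J(X,Y)$ vanishes automatically on any Levi structure. Without this automatic cancellation, $\cN_J=0$ would impose extra algebraic constraints beyond the Sasakian condition and the proposed equivalence would not close — it is precisely the pre-assumed Levi (equivalently, almost CR) hypothesis that makes normality coincide with the Sasaki condition.
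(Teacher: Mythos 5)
Your proof is correct and follows essentially the same route the paper takes: the paper does not prove Theorem \ref{Ianus} itself but defers to the $N^1,N^2,N^3,N^4$ decomposition of $N_J$ from \cite{Ianus:1972,Blair:2010,Boyer:2008}, which is exactly the computation you carry out explicitly (your horizontal--horizontal formula is $N^1+N^2\,\pa_t$ and your mixed formula is $-N^3$, with $N^4$ absorbed into the vanishing of the $\pa_t$-component). Your only refinement is observing directly that $N^2$ and $N^4$ vanish identically on any Levi structure, which makes the argument self-contained rather than relying on the cited implication ``$N^1=0$ forces $N^2=N^3=N^4=0$''.
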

In the computation of $N_J$ in \cite{Blair:2010, Boyer:2008}, the authors consider four tensors, often denoted by $N^1, N^2, N^3,$ and $N^4$, given by
\begin{align}\label{4ten}
    &N^1(X,Y)= N_J(X,Y)=N_{\bp}(X,Y)+\xd\zh(X,Y)\xi,\\\nonumber
    &N^2(X,Y)= (\Ll_{\bp(X)}\eta)(Y)-(\Ll_{\bp(Y)}\eta)(X),\\\nonumber
    &N^3(X)= (\Ll_{\xi}\bp)(X),\\\nonumber
    &N^4(X)= (\Ll_{\xi}\eta)(X),
\end{align}
where $X,Y$ are vector fields on $M$. If $N^1$ vanishes, then $J$ is integrable, but it is important to notice that in this case also $N^2, N^3$, and $N^4$ vanish. A contact structure $(M,\zh)$ for which the tensors $N^2,N^3,N^4$ vanish is called in the literature a \emph{K-contact structure} (cf. \cite{Cappelletti:2015}), a concept which is useless for our purposes. The condition $N^1=0$ is exactly our condition telling that $(\zh,\pc)$ is a Sasakian structure (Corollary \ref{frueN}).
\begin{example}\label{sasa} Consider $M=\R^{2n+1}$, together with global coordinates $(x^1,\cdots,x^n,y^1,\cdots,y^n,z)$ and the Darboux contact form
\begin{align*}
    \eta=\xd z-\sum^n_{i=1}y^i\xd x^i.
\end{align*}
One can show that the Riemannian metric
$$g=\eta\otimes\eta+\sum^n_{i=1}{\big(\xd x^i\ot\xd x^i+\xd y^i\ot\xd y^i\big)}$$
defines a contact metric structure on $\R^{2n+1}$, and we have the tensor
\be\label{cphi}\phi=\sum^n_{i=1}\Big(\xd y^i\ot\big(\pa_{x^i}+y^i\pa_z\big)-\xd x^i\ot\pa_{y^i}\Big).\ee
It is easy to check that (\ref{nJ}) defines a complex structure on $\cM$, so the contact metric structure $(\phi,\xi,\eta,g)$ is Sasakian, where $\zx=\pa_z$ is the Reeb vector field in this case.

\mn Another example is $S^{2n+1}$ with its canonical Riemannian metric and the contact form being the restriction of the Liouville 1-form on $\R^{2n+2}$,
\be\label{zvy}\zvy=\half\sum_k\big(q^k\xd p_k-p_k\xd q^k\big),\ee
to $S^{2n+1}$; for more details and examples see \cite{Blair:2010, Boyer:2008}. Such odd-dimensional spheres are canonical \emph{contactifications} of complex projective spaces $\C\Pe^n$ with their canonical symplectic forms (cf. \cite{Grabowska:2024}). The latter symplectic manifolds play a fundamental role in quantum physics as manifolds of pure quantum states (for the geometry of quantum mechanics see \cite{Grabowski:2005}).
\end{example}
As we mentioned earlier, Sasakian manifolds in the traditional setting can also be characterized by means of other properties; the following theorem provides an example.
\begin{theorem}\cite{Blair:2010}\label{thm5.4}
    A contact metric structure $(\phi, \xi, \eta, g)$ is Sasakian if and only if
    \begin{align*}
        (\sD_X\phi)(Y)=g(X,Y)\xi-\eta(Y)X,
    \end{align*}
where $\sD$ is the Levi-Civita connection associated with $g$.
\end{theorem}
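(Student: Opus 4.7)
The plan is to use the Koszul formula for the Levi-Civita connection $\sD$ to express $(\sD_X\phi)Y$ in terms of $\xd\Phi$, where $\Phi(X,Y)=g(X,\phi Y)$, of $N_\phi$, and of the auxiliary tensors involving Lie derivatives of $\phi$ and $\zh$ along $\zx$ and $\phi$-directions. Since $\Phi=\xd\zh$ for any contact metric structure, $\xd\Phi=0$ is automatic. The Sasakian hypothesis, through Corollary~\ref{frueN} and Proposition~\ref{Killing}, will then make all the auxiliary tensors vanish, collapsing the Koszul identity into a pure $\zh$-boundary form that one can simplify by the three structural identities $\phi\zx=0$, $\phi^2=-\id+\zh\ot\zx$, and $g(\phi U,V)=-g(U,\phi V)$.

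For the forward direction, assume $(M,\zh,\phi)$ is Sasakian. Then Corollary~\ref{frueN} gives $N_\phi+\xd\zh\ot\zx=0$, and Proposition~\ref{Killing} gives $\Ll_\zx\phi=0$, so the tensors $N^{(1)},N^{(3)}$ of (\ref{4ten}) vanish; the vanishing of $N^{(2)}$ and $N^{(4)}$ then follows from the Killing property of $\zx$ and the closedness of $\xd\zh$. Substituting into the classical Koszul identity
$$2g\big((\sD_X\phi)Y,Z\big)=\xd\Phi(X,\phi Y,\phi Z)-\xd\Phi(X,Y,Z)+g\big(N_\phi(Y,Z),\phi X\big)+(\text{terms in }N^{(2)},\zh),$$
only the pure $\zh$-boundary contributions survive. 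Using $\xd\zh(U,V)=g(U,\phi V)$ together with $g(\phi U,\phi V)=g(U,V)-\zh(U)\zh(V)$, the triple-$\zh$ terms cancel and one is left with
$$g\big((\sD_X\phi)Y,Z\big)=g(X,Y)\zh(Z)-g(X,Z)\zh(Y)=g\big(g(X,Y)\zx-\zh(Y)X,Z\big),$$
which gives the claimed identity by nondegeneracy of $g$.

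For the converse, assume the identity and specialize to $Y=\zx$. Since $\phi\zx=0$, the left-hand side becomes $-\phi(\sD_X\zx)$, while the right-hand side is $\zh(X)\zx-X=-\phi^2 X$; hence $\phi\big(\sD_X\zx+\phi X\big)=0$. Combined with $g(\sD_X\zx,\zx)=\tfrac{1}{2}X\big(g(\zx,\zx)\big)=0=g(\phi X,\zx)$, this pins down $\sD_X\zx=-\phi X$, so in particular $\zx$ is Killing for $g$. Plugging the assumed formula into the torsion-free Nijenhuis expansion
$$N_\phi(X,Y)=(\sD_{\phi X}\phi)Y-(\sD_{\phi Y}\phi)X-\phi\big((\sD_X\phi)Y\big)+\phi\big((\sD_Y\phi)X\big),$$
the $\zh(X)\phi Y$ and $\zh(Y)\phi X$ terms cancel in pairs (using $\phi\zx=0$), leaving only a $\zx$-valued contribution proportional to $\xd\zh(X,Y)\zx$; by Corollary~\ref{frueN} this is the Sasakian condition.

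The main obstacle is the bookkeeping in the Koszul expansion of the forward direction: all the auxiliary tensors $N^{(2)},N^{(3)},N^{(4)}$ of (\ref{4ten}) must be shown to vanish under the Sasakian hypothesis, and the surviving $\zh$-contributions must be combined correctly using the compatibility identities. The converse is comparatively direct once $\sD_X\zx=-\phi X$ has been established, as it then becomes a purely algebraic manipulation of the torsion-free expansion of $N_\phi$.
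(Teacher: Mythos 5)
The paper itself gives no proof of this statement; it is quoted directly from Blair's monograph \cite{Blair:2010}, so there is nothing internal to compare against, and your proposal must stand on its own. Your route is the standard textbook one. The converse direction is essentially complete and correct: specializing to $Y=\zx$ does yield $\phi(\sD_X\zx+\phi X)=0$ and hence $\sD_X\zx=-\phi X$ (note, however, that $\zh(X)\zx-X=+\phi^2X$, not $-\phi^2X$; your stated conclusion is the one consistent with the correct sign, so this is only a slip), and substituting the hypothesis into the torsion-free expansion of $N_\phi$ gives, purely algebraically, $N_\phi(X,Y)=\big(g(\phi X,Y)-g(\phi Y,X)\big)\zx=2g(\phi X,Y)\zx$, which is the normality condition.

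The forward direction, by contrast, is only a sketch, and the gap sits exactly where the content of the theorem lies. The implication rests on the general identity, valid for any almost contact metric structure,
\[
2g\big((\sD_X\phi)Y,Z\big)=3\,\xd\Phi(X,\phi Y,\phi Z)-3\,\xd\Phi(X,Y,Z)+g\big(N^{1}(Y,Z),\phi X\big)+N^{2}(Y,Z)\,\zh(X)+2\,\xd\zh(\phi Y,X)\zh(Z)-2\,\xd\zh(\phi Z,X)\zh(Y),
\]
which you neither prove nor fully display: the last two terms are hidden inside ``$(\text{terms in }N^{2},\zh)$'', yet after $\xd\Phi=0$ and $N^{1}=N^{2}=0$ they are the \emph{only} surviving terms, and they are what produce $g(X,Y)\zx-\zh(Y)X$ via $\xd\zh(\phi Y,X)=g(\phi Y,\phi X)=g(X,Y)-\zh(X)\zh(Y)$. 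Asserting that ``only the pure $\zh$-boundary contributions survive'' without exhibiting them leaves the decisive computation unperformed; either prove the identity or at least state it in full. Finally, your phrase ``proportional to $\xd\zh(X,Y)\zx$'' in the converse conceals a genuine normalization issue: with the exterior-derivative convention used elsewhere in this article (e.g.\ Example \ref{sasa}, where $\xd\zh(X,Y)=g(X,\phi Y)$ holds for the unrescaled Darboux metric), one computes $\sD_X\zx=-\tfrac12\phi X$ and $N_\phi=-2\,\xd\zh\ot\zx$ rather than $-\xd\zh\ot\zx$; the displayed formula $(\sD_X\phi)(Y)=g(X,Y)\zx-\zh(Y)X$ is literally correct only under Blair's convention $\xd\zh(X,Y)=\tfrac12\big(X\zh(Y)-Y\zh(X)-\zh([X,Y])\big)$. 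Your argument is structurally sound, but the constants must be fixed in one consistent convention before the two directions actually meet Corollary \ref{frueN}.
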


\mn Another approach, perhaps one of the most instructive ones, is due to Boyer and Galicki, see \cite{Boyer:2008} and references therein, and relates Sasakian geometry with the K\"ahler one. Contact Riemannian manifolds have also been studied in \cite{Tanno:1989}.
\begin{definition} Let $\zw$ be a symplectic form and $g$ be a Riemannian metric on a manifold $\cM$. We say that they are \emph{compatible} if the $(1,1)$ tensor field,
$$J=\big(\zw^\flat\big)^{-1}\circ g^\flat,$$
is an almost complex structure on $\cM$, i.e., $J^2=-\id_{\sT\cM}$.
Here, we consider $\zw^\flat,g^\flat:\sT \cM\to\sT^*\cM$ as given by the contractions in the second argument.
In other words, $J$ is uniquely determined from the identity
\be\label{J}g(X,Y)=\zw\big(X,J(Y)\big).
\ee
\end{definition}
\no The identity (\ref{J}), where $J^2=-\id_{\sT\cM}$, $g$ is a metric, and $\zw$ is a symplectic form, can serve as well as a definition of compatibility for any pair from $\{\zw,g,J\}$ in an obvious way.
In this sense, an \emph{almost K\"ahler manifold} is a manifold $\cM$ equipped with a metric $g$ and a symplectic form $\zw$ (equivalently, with $g$ and $J$, or $\zw$ and $J$) which are compatible. It is a \emph{K\"ahler manifold} if $J$ is integrable, i.e., $J$ is a complex structure on $\cM$. We have the following result \cite{Boyer:2008}, which can be viewed as an alternative definition of a Sasakian manifold in the traditional setting.
\begin{theorem}
Let $(M,g_M,\eta)$ be a contact metric manifold, and let $\cM=M\times \R_+$ be the Riemannian cone over $M$, together with the metric
$$g(x,s)=\xd s^2+s^2g_M(x)$$
and the symplectic form $\omega=\xd\big(s^2\eta\big)$. Then, $(M, g_M, \eta)$ is a Levi (resp., Sasakian) structure if and only if $(\cM,g,\omega)$ is almost K{\"a}hler (resp., K\"ahler).
\end{theorem}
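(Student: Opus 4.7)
The plan is to work directly on the cone $\cM = M \times \R_+$ using the $g$-orthogonal splitting $\sT\cM = \langle \pa_s \rangle \oplus \langle \xi \rangle \oplus C$, where $\xi$ is the Reeb vector field of $\eta$ extended to $\cM$ by tangency to the $M$-factor and $C$ is the horizontal lift of the contact distribution. Expanding
$$\omega = 2s\,\xd s \wedge \eta + s^2\,\xd \eta, \qquad g = \xd s\otimes\xd s + s^2\bigl(\eta\otimes\eta + g_C\bigr),$$
one sees that the three summands are also symplectically paired into the two blocks $\langle\pa_s,\xi\rangle$ and $C$. Solving explicitly for the endomorphism $J$ from $g(X,Y) = \omega(X, JY)$ gives $J|_C = \pc$ on the contact block (using the Levi identity $g_C(X,Y) = \xd\eta(X,\pc(Y))$), and on the remaining plane $J$ swaps $\pa_s$ and $\xi$ up to the unique $s$-dependent factors making $J^2 = -\id$.

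With $J$ in hand, the almost K\"ahler equivalence follows: $J^2|_C = \pc^2 = -\id_C$ is the Levi complex structure condition, while the compatibility on the $\langle\pa_s,\xi\rangle$ block reduces to the normalization $g_M(\xi,\xi) = \eta(\xi) = 1$, which is part of the contact metric data. Conversely, if $J^2 = -\id$, then $\pc := J|_C$ is a complex structure on $C$ and the positive-definiteness of the resulting Levi form is inherited from $g$, yielding a Levi structure $(M,\eta,g_M)$.

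For the K\"ahler/Sasakian equivalence I would compute the Nijenhuis tensor $N_J$ on $\cM$. Since $\pa_s$ commutes with every $s$-independent lift of a vector field on $M$, evaluating $N_J$ on pairs of generators taken among $\pa_s$, $\xi$, and vectors in $C$ unfolds into a linear combination of the four classical tensors $N^1, N^2, N^3, N^4$ of (\ref{4ten}) pulled back from $M$. Thus $N_J = 0$ on $\cM$ if and only if all four tensors vanish on $M$. By Corollary~\ref{frueN}, the Sasakian condition is exactly $N^1 = 0$; by Proposition~\ref{Killing}, this already forces $\xi$ to be a Killing vector field for $g_M$, so $N^3 = \Ll_\xi\bp = 0$, while $N^4 = \Ll_\xi\eta = 0$ holds automatically for any Reeb vector field, and $N^2 = 0$ follows from the almost CR identity $\eta([\pc X,Y]) = \eta([\pc Y,X])$ of Proposition~\ref{pin}.

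The main technical obstacle is the explicit unfolding of $N_J$: one must expand brackets such as $[\pa_s, JY]$, $[\xi, JX]$, and $[\pc X, \pc Y]$ for $X,Y \in C$ and re-collect the resulting terms in the splitting $\sT\cM = \langle\pa_s\rangle \oplus \langle\xi\rangle \oplus C$. The algorithm is standard and the identification with $N^1, \ldots, N^4$ is direct, but some care is needed because $J$ is not simply $\pc$; the $s$-dependent swap on $\langle\pa_s,\xi\rangle$ contributes extra terms involving $\pa_s$ and multiplication by $s$. Conceptually, however, the content of the result is entirely encoded by Corollary~\ref{frueN} and Proposition~\ref{Killing}.
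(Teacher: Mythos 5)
Your overall strategy --- the $g$-orthogonal splitting $\sT\cM=\langle\pa_s\rangle\oplus\langle\xi\rangle\oplus C$, blockwise determination of $J$ from $g=\omega(\cdot,J\cdot)$, and a blockwise Nijenhuis computation --- is the right one; it is essentially the route the paper takes for its own Theorems \ref{main1} and \ref{main2}, which treat the $1$-homogeneous cone $\omega=\xd(s\eta)$, $g=(\xd s)^2/s+s\,g_M$. (Note that the paper does not prove the statement you were given: it is quoted from \cite{Boyer:2008}, and the honest in-house proof would be to substitute $r=s^2$ and invoke Theorem \ref{main1}.)

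There is, however, a genuine gap exactly at the step you wave through: the compatibility on the $\langle\pa_s,\xi\rangle$ block does \emph{not} ``reduce to the normalization $g_M(\xi,\xi)=\eta(\xi)=1$''. Carrying out the computation with the stated tensors, $\omega=2s\,\xd s\wedge\eta+s^2\xd\eta$ gives $\omega(X,\xi)=2s\,\xd s(X)$ and $\omega(X,\pa_s)=-2s\,\eta(X)$, while $g(X,\pa_s)=\xd s(X)$ and $g(X,\xi)=s^2\eta(X)$; hence $J\pa_s=\tfrac{1}{2s}\xi$ and $J\xi=-\tfrac{s}{2}\pa_s$, so $J^2=-\tfrac14\,\id$ on this block. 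The actual constraint is $g(\pa_s,\pa_s)\,g(\xi,\xi)=c^2$ where $c$ is the coefficient of $\xd s\wedge\eta$ in $\omega$, i.e.\ $s^2=4s^2$, which fails. With the paper's convention $\xd\eta(X,Y)=g(X,\phi(Y))$ the statement is consistent only for $\omega=\tfrac12\xd(s^2\eta)$ (Boyer--Galicki's normalization), or equivalently after the substitution $r=s^2$, under which $\xd(r\eta)=\xd(s^2\eta)$ but $(\xd r)^2/r=4(\xd s)^2$. A correct proof must track these constants or perform that reduction; your write-up asserts precisely the identity where the factor of $2$ lives without computing it. The integrability half is also only a plan --- you acknowledge that the unfolding of $N_J$ is not carried out --- but the endgame you describe (reduction to $N^1=0$, with $N^2,N^3,N^4$ then vanishing automatically by Proposition \ref{Killing}, Proposition \ref{pin}, and $\Ll_\xi\eta=0$) is correct and matches both the remarks after (\ref{4ten}) and the direct bracket computation in the proof of Theorem \ref{main2}.
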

\no In this case, one can look at the relation of K\"ahler structure on $\cM$ and Sasakian structure on $M$, and think of a Levi structure as `almost Sasakian' with the integrability condition being normality of the contact metric structure, which agrees with the terminology used in \cite{Tashiro:1963}.

\section{Riemannian $\Rt$-bundles}
Inspired by the homogeneous approach to contact geometry \emph{via} homogeneous symplectic structures on $\Rt$-bundles, one may try to extend it to the case of Riemannian metrics. The metric $g(x,s)=ds^2+s^2g_M(x)$ on the Riemannian cone $\cM=M\ti\R_+$, that plays a dominant role in the traditional setting of Sasakian manifolds, is $2$-homogeneous with respect to the $\R_+$-action on the cone. On the other hand, the homogeneous symplectic structure associated with any contact structure must be 1-homogeneous. However, a serious problem occurs: a 1-homogeneous symmetric covariant tensor field $g$ on an $\Rt$-bundle cannot be positively defined, as $g(x,s)=-g(x,-s)$. We remove this obstacle by starting from a properly defined notion of a 1-homogeneous Riemannian metric.
\begin{definition}
Let $\zt:P\to M$ be an $\Rt$-bundle with the principal $\Rt$-action $s\mapsto h_s$.
A tensor field $K$ on $P$ we call \emph{positively homogeneous} of degree $k\in\Z$ if
\[h_s^*(K)=|s|^k\cdot K\quad\text{for all}\quad s\in\Rt.
\]
In other words,
\[\Ll_\n(K)=k\cdot K\quad\text{and}\quad h_{-1}^*(K)=K.
\]
Here, $h_s^*(K)$ is the pullback of the tensor field $K$ associated with the diffeomorphism $h_s$, and $\n$ is the Liouville vector field.
Covariant tensors that are positively 1-homogeneous we will call simply \emph{positively homogeneous}.
\end{definition}
\no Note that some concepts of homogeneity on $\Rt$-bundles and, more generally, for $G$-structures are already present in a recent paper \cite{Tortorella:2020}.
\begin{example}
On the trivial $\Rt$-bundle $M\ti\Rt$, the forms $|s|^k\za(x)$ are positively homogeneous of degree $k$, the forms $\xd|s|\we\za(x)=\sgn(s)\xd s\we\za(x)$ are positively homogeneous (of degree 1), and functions $A(x)|s|$ are positively homogeneous. In fact, all positively homogeneous functions are of this form.
\end{example}
\no In the sequel, we will use the following description of positively homogeneous symmetric forms. Here, symbols like $\wh\za$ denote the pullbacks of differential forms $\za$ on $M$ to $P$,
\begin{proposition}\label{phforms}
Let $\s$ be a calibration on an $\Rt$-bundle $\zt:P\to M$. Then,
a symmetric $r$-form $\zb$ on $P$ is $\Rt$-invariant if and only if
\be\label{invf}\zb=\sum_{l=0}^r\Big(\frac{\xd\s}{\s}\Big)^l\odot\wh{\za}_{l},\ee
where $\za_l$ is a symmetric $(r-l)$-form on $M$. Here, `$\odot$' denotes the symmetric tensor product, $\za\odot\zb=(\za\ot\zb+\zb\ot\za)/2$.

Moreover, a symmetric form $\zg$ on $P$ is positively homogeneous if and only if $\zg/\s$ is $\Rt$-invariant.
\end{proposition}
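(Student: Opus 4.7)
The plan is to leverage the $\sigma$-connection from Remark \ref{conn} to split $\sT^*P$ and then verify (\ref{invf}) by a simple weight count in $\sigma$-normal coordinates, with the second claim reduced to a direct ratio argument.

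For the implication ``$\Leftarrow$'' in (\ref{invf}), positive homogeneity of $\sigma$ gives $h_\nu^*\sigma = |\nu|\sigma$, hence $h_\nu^*(\xd\sigma/\sigma) = \xd\sigma/\sigma$, while $\tau \circ h_\nu = \tau$ forces every $\wh\alpha_l = \tau^*\alpha_l$ to be $\Rt$-invariant; any sum of the form (\ref{invf}) is therefore $\Rt$-invariant. For the converse I fix a $\sigma$-normal trivialization (Proposition \ref{phf}), in which $|s|=\sigma$, $h_\nu(x,s)=(x,\nu s)$, and $\xd\sigma/\sigma = \xd s/s$. I decompose
\[
\beta = \sum_{l=0}^r (\xd s)^l \vee \beta_l(x,s)
\]
uniquely with each $\beta_l$ involving only the $\xd x^i$'s, and use $h_\nu^*\xd s = \nu\,\xd s$ and $h_\nu^*\xd x^i = \xd x^i$; invariance of $\beta$ then forces $\nu^l\beta_l(x,\nu s) = \beta_l(x,s)$ term by term. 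This yields $\beta_l(x,s) = s^{-l}\alpha_l(x)$ for some symmetric $(r-l)$-form $\alpha_l$ on the base of the chart, and substituting back gives the local expression $\beta = \sum_l (\xd s/s)^l \vee \wh\alpha_l$, which is (\ref{invf}).

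On overlaps of two $\sigma$-normal charts the transition reads $s \mapsto \pm s$ (Proposition \ref{phf}), so $(\xd s)^l$ and $s^l$ are each multiplied by $(\pm 1)^l$; the product $s^l\beta_l$ is therefore invariant under the transition, and the locally defined $\alpha_l$ patch into global symmetric forms on $M$. For the second claim, since $\sigma > 0$ everywhere, $\gamma/\sigma$ is a well-defined symmetric form on $P$, and from $h_\nu^*(\gamma/\sigma) = h_\nu^*\gamma / (|\nu|\sigma)$ one reads off that $h_\nu^*\gamma = |\nu|\gamma$ (positive $1$-homogeneity) is equivalent to $h_\nu^*(\gamma/\sigma) = \gamma/\sigma$ ($\Rt$-invariance).

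The main (and essentially the only) delicate point is the global patching of the $\alpha_l$: one has to confirm that the expressions produced in different $\sigma$-normal charts agree on overlaps. This is precisely where calibrations pay off, because Proposition \ref{phf} restricts the transition cocycle to sign changes, reducing the check to the trivial computation $(\pm 1)^{2l}=1$ above. Everything else is routine bookkeeping around the intrinsic splitting $\sT^*P = \langle \xd\sigma/\sigma\rangle \oplus \tau^*\sT^*M$ induced by the $\sigma$-connection.
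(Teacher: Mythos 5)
Your proof is correct, but it takes a genuinely different route from the paper's. The paper argues intrinsically and without coordinates: it inducts on the largest integer $k$ with $(i_\n)^k\zb\ne 0$, peeling off the term $\big(\xd\s/\s\big)\vee i_\n\zb$ (using $i_\n\big(\xd\s/\s\big)=1$) so that the remainder has strictly smaller vertical contraction degree, and terminates with the observation that an $\Rt$-invariant horizontal form is a pullback from $M$; no patching over charts is ever needed. Your version instead fixes $\s$-normal trivializations, extracts the fiber dependence $\zb_l(x,s)=s^{-l}\za_l(x)$ from the weight count under $h_\nu$ (using uniqueness of the decomposition by powers of $\xd s$), and then glues the local data using the fact from Proposition \ref{phf} that the transition cocycle consists only of sign changes $s\mapsto\pm s$, so $(\xd s/s)^l$ and hence the $\za_l$ are chart-independent. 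Both arguments are complete; yours is more computational and makes explicit why the calibration is exactly the right auxiliary datum (it reduces the cocycle to signs, which cancel in each term), while the paper's is shorter and manifestly coordinate-free. Your handling of the second claim, via $h_\nu^*\s=|\nu|\s$ and division by the nowhere-vanishing $\s$, is the same one-line argument the paper leaves implicit.
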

\begin{proof}
Let $r$ be the biggest integer such that $\big(i_\n\big)^r\zb\ne 0$. If $r=0$, then $\zb$ is basic and $\Rt$-invariant, thus a pullback from $M$. Inductively with respect to $r$, we write
$$\zb=\Big(\zb-\big(\xd\s/\s\big)\otimes i_\n\zb\Big)+\big(\xd\s/\s\big)\otimes i_\n\zb,$$
and as $i_\n\big(\xd\s/\s\big)=1$, we have
$$\big(i_\n\big)^r\big(i_\n\zb\big)=0\quad\text{and}\quad
\big(i_\n\big)^r\Big(\zb-\big(\xd\s/\s\big)\otimes i_\n\zb\Big)=0.$$
The rest is obvious.
\end{proof}
\no One can easily derive an analogous result for skew-symmetric forms, which is actually much simpler, as $(\xd\mathfrak{s}/\mathfrak{s})^{\wedge l}= 0$ for $l>1$.
\begin{corollary} If $\s$ is a calibration on $P$, then any positively homogeneous symmetric 2-form $g$ on $P$ has a unique decomposition
\be\label{metr1}
        g=\wh{A}\dfrac{(\xd \s)^2}{\s}+\xd\s\odot\hm+ \s\cdot \wh{\zg}\,,
\ee
where $A$ is a function, $\zm$ is a 1-form, and $\zg$ is a symmetric 2-form on $M$.
\end{corollary}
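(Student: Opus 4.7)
\medskip
\noindent\textbf{Proof plan.} My approach is to obtain (\ref{metr1}) as a direct specialization of Proposition~\ref{phforms}. Since $\s$ is a calibration, it is positive and positively $1$-homogeneous on $P$, so $h_\zn^*\s=|\zn|\s$, and hence for any positively homogeneous symmetric $2$-form $g$ on $P$ the quotient $g/\s$ is $\Rt$-invariant. Applying the decomposition (\ref{invf}) with $r=2$ to $g/\s$ yields
\[
\frac{g}{\s}=\wh{\za}_0+\Big(\frac{\xd\s}{\s}\Big)\vee\wh{\za}_1+\Big(\frac{\xd\s}{\s}\Big)^{\!2}\wh{\za}_2,
\]
for a unique symmetric $2$-form $\za_0$, a unique $1$-form $\za_1$, and a unique function $\za_2$ on $M$. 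Setting $\zg:=\za_0$, $\zm:=\za_1/2$, $A:=\za_2$, and multiplying through by $\s$, I obtain precisely the claimed expression (\ref{metr1}), using the simplifications $\s\cdot(\xd\s/\s)^2=(\xd\s)^2/\s$ and $\s\cdot(\xd\s/\s)=\xd\s$.

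For the uniqueness of the triple $(A,\zm,\zg)$, I would invoke the Liouville vector field $\n$. Because $\s$ is positively $1$-homogeneous one has $i_\n\xd\s=\s$, whereas $i_\n\wh{\za}=0$ for the pullback of any covariant tensor $\za$ on $M$. Consequently $g(\n,\n)=\s\,\wh A$ recovers $A$, the identity $i_\n g-\wh A\,\xd\s=\s\,\hm$ recovers $\zm$, and the remaining part $g-\wh{A}(\xd\s)^2/\s-2\,\xd\s\vee\hm$ equals $\s\,\wh\zg$ and hence determines $\zg$. Since the pullback from covariant tensors on $M$ to $\Rt$-invariant horizontal tensors on $P$ is a bijection, the triple $(A,\zm,\zg)$ is uniquely determined by $g$.

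The only genuine content here is Proposition~\ref{phforms}, already established, so no real obstacle is anticipated. The care needed concerns only the bookkeeping of factor conventions (the explicit $2$ before $\xd\s\vee\hm$ absorbed into $\zm=\za_1/2$, and the redistribution of powers of $\s$ after multiplying the invariant decomposition of $g/\s$ by $\s$) so that the coefficients match the prescribed form.
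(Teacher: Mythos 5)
Your proposal is correct and follows exactly the route the paper intends: the corollary is the $r=2$ case of Proposition~\ref{phforms} applied to $g/\s$, multiplied back by $\s$, with the bookkeeping $A=\za_2$, $\zm=\za_1/2$, $\zg=\za_0$. Your supplementary uniqueness argument via contraction with the Liouville vector field $\n$ (using $i_\n\xd\s=\s$ and $i_\n\wh{\za}=0$) is a valid and welcome addition, since the paper leaves that point implicit.
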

\begin{definition}
A \emph{Riemannian $\Rt$-bundle} is an $\Rt$-bundle $\zt:P\to M$ endowed with a positively homogeneous Riemannian metric $g$. Positively homogeneous Riemannian metrics we will call just \emph{homogeneous}.
\end{definition}
\begin{example}
Let us notice that, for every Riemannian metric $g_M$ on $M$, the metric
$$g(x,s)=\frac{\big(\xd s\big)^2}{|s|}+|s|\cdot g_M(x)$$
on the trivial $\Rt$-bundle $P=M\ti\Rt$, which plays the fundamental r\^ole in the Sasakian geometry, is positively homogeneous.
\end{example}
\begin{proposition}\label{gatlas} For every positively homogeneous Riemannian metric $g$ on an $\Rt$-bundle $P$, the function $\s=g(\n,\n)$, where $\n$ is the Liouville vector field on $P$, is a calibration, called the \emph{$g$-calibration}.
\end{proposition}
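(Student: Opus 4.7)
The plan is to verify directly that $\s := g(\n,\n)$ has the two properties required of a calibration: it is everywhere positive, and it is positively homogeneous of degree $1$, i.e.\ $h_s^*\s = |s|\cdot\s$ for every $s\in\Rt$. Smoothness is automatic from smoothness of $g$ and $\n$.

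For positivity, I would observe that the principal $\Rt$-action on $P$ is free, so the fundamental vector field $\n$ vanishes nowhere. Combined with positive-definiteness of $g$, this gives $g(\n,\n) > 0$ pointwise. The key observation for the homogeneity is that $\n$, being the fundamental vector field of the action of an abelian group, is $\Rt$-invariant: $h_s^*\n = \n$ for every $s\in\Rt$. This is transparent in local coordinates $(x^i,s)$ adapted to a trivialization, where $\n = s\pa_s$ and $h_t(x,s) = (x,ts)$, and it is also a general fact about abelian principal actions. Granted this, the computation is a one-liner: using the hypothesis $h_s^* g = |s|\,g$,
\[
h_s^*\s = (h_s^* g)\big(h_s^*\n, h_s^*\n\big) = |s|\cdot g(\n,\n) = |s|\cdot\s.
\]

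There is essentially no obstacle to this proposition; it is a routine consistency check whose only subtle ingredient is the $\Rt$-invariance of the Liouville vector field, which relies on $\Rt$ being abelian. The statement simply confirms that the earlier-introduced notion of calibration is intrinsically present in every Riemannian $\Rt$-bundle, so that the results of Section~5 (adapted atlases, horizontal foliation, etc.) can subsequently be applied with a canonical choice of calibration determined by $g$ itself.
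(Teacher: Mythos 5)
Your proposal is correct and matches the paper's own (very short) argument: both rest on the $\Rt$-invariance of the Liouville vector field $\n$ together with the positive homogeneity of $g$ to get $h_s^*\s=|s|\,\s$, and on positive-definiteness of $g$ (plus the nonvanishing of $\n$) for positivity. Your extra remarks on freeness of the action and on invariance of fundamental vector fields for abelian groups are just explicit justifications of steps the paper leaves implicit.
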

\begin{proof} Indeed, the Liouville vector field $\n$ is $\Rt$-invariant, thus homogeneous of degree 0, and $g$ is positively homogeneous, so $g(\n,\n)$ is positively homogeneous and positive, as $g$ is Riemannian.

\end{proof}
\begin{proposition}\label{hRm}
A Riemannian metric $g$ on an $\Rt$-bundle $\zt:P\to M$ is positively homogenous if and only if {$\s=g(\n,\n)$ is a calibration, and $g_0=g/\s$ is an $\Rt$-invariant metric on $P$ which reads}
\be\label{metr2}
        g_0=\big(\xd \s/\s+\hm\big)^2+ \hg\,,
\ee
where $g_M$ is a Riemannian metric on $M$, and $\zm$ is a 1-form on $M$.
\end{proposition}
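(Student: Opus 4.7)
The plan is to deduce both directions from the decomposition of positively homogeneous symmetric 2-forms provided by the corollary following Proposition \ref{phforms}, together with Proposition \ref{gatlas}.

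For the forward direction, assume $g$ is a positively homogeneous Riemannian metric. Proposition \ref{gatlas} gives that $\s = g(\n,\n)$ is a calibration, and since both $g$ and $\s$ are positively $1$-homogeneous, $g_0:=g/\s$ is $\Rt$-invariant. Applying the corollary to $g$ yields the unique decomposition
\[
g=\wh{A}\,\frac{(\xd\s)^2}{\s}+2\,\xd\s\vee\hm+\s\cdot\wh{\zg},
\]
for some function $A$, $1$-form $\zm$ and symmetric $2$-form $\zg$ on $M$. The coefficient $A$ is pinned down by contracting twice with $\n$: using $i_\n\xd\s=\Ll_\n\s=\s$ together with $i_\n\hm=0=i_\n\wh{\zg}$ (both forms being basic), I get $g(\n,\n)=\wh{A}\s$, and comparison with $\s=g(\n,\n)$ forces $A\equiv 1$. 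Dividing by $\s$ and completing the square then gives
\[
g_0=\left(\frac{\xd\s}{\s}+\hm\right)^2+\bigl(\wh{\zg}-\hm^2\bigr),
\]
so setting $g_M:=\zg-\zm^2$ produces the claimed formula.

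The main obstacle is verifying that $g_M$ so defined is positive definite and hence a genuine Riemannian metric. Here I would use the hyperplane distribution $\cD:=\ker(\xd\s/\s+\hm)\subset\sT P$, which is transverse to $\n$ since $(\xd\s/\s+\hm)(\n)=1$. Hence $\sT\zt|_\cD\colon\cD\to\sT M$ is a fiberwise linear isomorphism, and for any $v\in\cD$ the formula reduces to $g_0(v,v)=\hg(v,v)=g_M(\sT\zt(v),\sT\zt(v))$. Since $g_0=g/\s$ is a positive multiple of the Riemannian $g$, it is itself Riemannian, and positivity of $g_M$ on $\sT M$ follows.

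For the converse, I would take the explicit formula as the definition of $g$ and verify the two properties directly. Positive $1$-homogeneity is term-by-term: $\s$ is positively $1$-homogeneous by definition, $\xd\s/\s$ is $\Rt$-invariant (using $\Ll_\n\s=\s$ and $h^*_{-1}\s=\s$), while $\hm$ and $\hg$ are invariant as pullbacks from $M$. Positive definiteness follows from the $g$-orthogonal decomposition $\sT P=\la\n\ran\op\cD$: a short check gives $g(\n,\n)=\s>0$, $g(\n,v)=0$ for $v\in\cD$ (using that $\hm$ and $\hg$ are basic), and $g|_{\cD\ti\cD}=\s\cdot\hg$, whose $\sT\zt$-projection to the base is the Riemannian $g_M$.
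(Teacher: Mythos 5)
Your argument is correct, and its skeleton coincides with the paper's: invoke the decomposition $g=\wh{A}(\xd\s)^2/\s+2\,\xd\s\vee\hm+\s\wh{\zg}$, pin down $A=1$ by evaluating on $(\n,\n)$, complete the square, and set $g_M=\zg-\zm^2$. The one step where you genuinely diverge is the positivity of $g_M$: the paper observes that $g(\n+tX,\n+tX)=\s\big(1+2t\,\hm(X)+t^2\,\wh{\zg}(X,X)\big)>0$ for all $t$ and reads off $\hm(X)^2-\wh{\zg}(X,X)<0$ from the discriminant, whereas you restrict $g_0$ to the hyperplane distribution $\cD=\ker(\xd\s/\s+\hm)$, on which $g_0$ reduces to $\hg$, and transport positivity through the fiberwise isomorphism $\sT\zt|_{\cD}$. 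Both are sound and equally elementary; your version has the small advantage of exhibiting $\cD$ as the $g$-orthogonal complement of $\la\n\ran$ (i.e., the horizontal distribution of the connection $\zz=\xd\s/\s+\hm$ that the paper introduces only after the proposition), and you also spell out the converse direction, which the paper leaves implicit.
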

\begin{proof}
Let $\s=g(\n,\n)$. Being positively homogeneous, $g$ is of the form (\ref{metr1}), with $A=1$.
But $g$ is positively defined, so for any non-vertical vector $X\in\sT P$ and any $t\in\R$ we have
$$g(\n+tX,\n+tX)=\s\big(1+2t\cdot\hm(X)+t^2\cdot\wh{\zg}(X,X)\big)>0,$$
thus
$$\hm(X)^2-\wh{\zg}(X,X)<0$$
for $X\ne 0$. Hence, $\wh{\zg}-\hm^2$ is positively defined and $\Rt$-invariant, thus of the form $\hg$ for a Riemannian metric $g_M=\zg-\zm^2$ on $M$, and we have (\ref{metr2}).

\end{proof}
\no Note that the Riemannian metric $g_M$ on $M$ is uniquely determined by the positively homogeneous metric $g$ on $P$. We will call it the \emph{shadow} of $g$. {Note also some similarities to the structures appearing in  \cite[Section 6.3]{Tortorella:2020}.}

\mn If we change the calibration to $\s'=\wh{u}\s$, where $u$ is a positive function on $M$, then \be\label{cof}\zz=\xd\s/\s+\hm\ee
will change to
$$\zz'=\xd\s'/\s'+\big(\zm+\xd{u}/{u}\big)^{\wh{}}.$$
This means that $\zz$ is a connection 1-form of a principal connection on $P$ uniquely determined by the $\Rt$-invariant metric $g_0$ on $P$. In Ehresmann's terms, the connections are given by the $\R$-invariant horizontal distribution $\cH=\ker(\zz)$ which does not depend on the choice of the calibration.
It is easy to see that $\cH$ is simply the $g_0$-orthogonal complement of the vertical subbundle $\sV P$ (spanned by the Liouville vector field $\n$) and that $g_0(\n,\n)=1$.
\begin{definition}\label{gcal}
A positively homogeneous Riemannian metric $g$ on an $\Rt$-bundle $\zt:P\to M$ we call \emph{calibrated} if $\zz=\zz_\s=\xd\s/\s$ is the $\s$-connection (cf. Remark \ref{conn}), where $\s=g(\n,\n)$ is the $g$-calibration. In other words, $\zm=0$ and
\be\label{cc} g=\s\Big(\big(\xd\s/\s\big)^2+\hg\Big),\ee
for some Riemannian metric $g_M$ on $M$, the shadow of $g$.
\end{definition}
\section{K\"ahlerian $\Rt$-bundles}\label{Sec6}
To consider K\"ahlerian $\Rt$-bundles, let us observe first that given an almost contact structure $(\phi, \xi, \eta)$ on $M$, we can define a $(1,1)$-tensor $J$ on $\cM=M\ti\R_+$ by
\begin{align}\label{J2}
    J(X)=\phi(X)+\eta(X)\n,\,\,\, J(\n)=-\xi,
\end{align}
where $\n=s\partial_s$ is the generator of the $\R_+$-action on the cone $\cM$. It is easy to see that $J$ is an almost complex structure on $\cM$ and the tensor $J$ is $\R_+$-invariant; in particular,  $\Ll_\n J=0$. Since on $\Rt$-bundles we consider Riemannian metrics which are positively homogeneous, and symplectic forms which are homogeneous, a proper generalization of $J$ to $\Rt$-bundles should be a $(1,1)$-tensor being $\Rt$-invariant up to a sign.
\begin{definition}
Let $\zt:P\to M$ be an $\Rt$-bundle with the principal $\Rt$-action $s\mapsto h_s$.
A tensor field $K$ on $P$ we call \emph{half-invariant} if
\[ h_s^*(K)=\sgn(s)\cdot K\quad\text{for all}\quad s\in\Rt.
\]
For a $(1,1)$-tensor $J:\sT P\to\sT P$, this is equivalent to
$$\big(\sgn(s)\cdot J\big)\circ\sT h_s=\sT h_s\circ J.$$
\end{definition}
\no It is easy to see that, on trivial $\Rt$-bundles, $K$ is half-invariant if and only if $K=\sgn(s)K_0$, where $K$ is $\Rt$-invariant.
\begin{example}
On the trivial $\Rt$-bundle $M\ti\Rt$ the form $\xd s/|s|$ is half-invariant.
\end{example}
\begin{definition}
An \emph{almost K\"ahlerian $\Rt$-bundle} is a symplectic $\Rt$-bundle $(P,\zw)$, equipped additionally with a compatible almost complex half-invariant structure $J$. Such a structure we call \emph{K\"ahlerian $\Rt$-bundle} if $J$ is integrable. (Almost) \emph{K\"ahlerian $\R_+$-bundles} are defined analogously; we replace the half-invariance condition with the invariance.
\end{definition}
\no Let us recall that the compatibility means that the tensor field $g$ on $P$, defined by
\be\label{hK}g(X,Y)=\zw\big(X,J(Y)\big),\ee
is a Riemannian metric. The calibration $\s=g(\n,\n)$ can be described in terms of $\zw$ and $J$ as $\s=\zw\big(\n,J(\n)\big)$. In the case of $\zw$ being 1-homogeneous, this Riemannian metric is automatically positively homogeneous.  {Since $J^2=-\id_{\sT P}$, it is automatically an isometry and symplectomorphism,
\be\label{sym} \big(J(X),J(Y)\big)=g(X,Y),\quad\zw\big(J(X),J(Y)\big)=\zw\big(X,Y\big).
\ee}
\begin{proposition}\label{prop 7.5}
An (almost) K\"ahlerian $\Rt$-bundle can be equivalently defined as a symplectic $\Rt$-bundle $(P,\zw)$ endowed with a positively homogeneous Riemannian metric $g$ on $P$, which is \emph{compatible} with $\zw$, i.e., the $(1,1)$-tensor $J$ determined by (\ref{hK})
is an (almost) complex structure on $P$. In this case, $J$ is automatically half-invariant.
\end{proposition}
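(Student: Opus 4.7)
The plan is to show the two directions separately, both of which reduce to a short bookkeeping computation with the pullback identities $h_s^\ast\zw=s\zw$ and $h_s^\ast g=|s|\,g$ and the defining compatibility equation $g(X,Y)=\zw(X,J(Y))$.

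\textbf{From the $(P,\zw,J)$ picture to the $(P,\zw,g)$ picture.} Given an (almost) K\"ahlerian $\Rt$-bundle $(P,\zw,J)$ in the sense of the preceding definition, the tensor $g$ defined by \eqref{hK} is a Riemannian metric on $P$ by the assumed compatibility of $J$ with $\zw$. It remains only to verify positive homogeneity. I would compute
\[
(h_s^\ast g)(X,Y)=g(\sT h_s\,X,\sT h_s\,Y)=\zw\big(\sT h_s\,X,\,J(\sT h_s\,Y)\big),
\]
then use half-invariance of $J$ in its equivalent form $J\circ\sT h_s=\sgn(s)\cdot\sT h_s\circ J$ to pull $\sT h_s$ past $J$ at the cost of a sign, and finally apply $h_s^\ast\zw=s\zw$ to get $\sgn(s)\cdot s\cdot\zw(X,JY)=|s|\,g(X,Y)$. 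So $g$ is positively homogeneous.

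\textbf{From the $(P,\zw,g)$ picture to the $(P,\zw,J)$ picture.} Assume now that $g$ is a positively homogeneous Riemannian metric compatible with $\zw$, so that $J$ defined by \eqref{hK} satisfies $J^2=-\id_{\sT P}$. The only non-trivial point is to prove that this $J$ is automatically half-invariant, since then $(P,\zw,J)$ is (almost) K\"ahlerian in the original sense. I would proceed by writing the identity $h_s^\ast g=|s|\,g$ in the form
\[
\zw\big(\sT h_s\,X,\,J(\sT h_s\,Y)\big)=|s|\,\zw(X,JY)\,,
\]
and then rewriting the right-hand side as $\sgn(s)\cdot s\cdot\zw(X,JY)=\sgn(s)\cdot\zw(\sT h_s\,X,\sT h_s\,JY)$, using $h_s^\ast\zw=s\zw$. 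Subtracting and quantifying over $X$, nondegeneracy of $\zw$ together with surjectivity of $\sT h_s$ yields $J(\sT h_s\,Y)=\sgn(s)\cdot\sT h_s\,JY$ for every $s$ and $Y$, which is precisely the half-invariance $h_s^\ast J=\sgn(s)\cdot J$.

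\textbf{Integrability part and anticipated obstacle.} The integrability clause (K\"ahler vs.\ almost K\"ahler) is a statement about $J$ alone and transfers for free between the two descriptions since the same tensor $J$ is produced in either direction. Honestly, there is no serious obstacle here: both computations are essentially one-line applications of the pullback relations. The only place where one must be careful is the sign bookkeeping, namely the identity $|s|=\sgn(s)\cdot s$, which is what forces $J$ to be half-invariant rather than invariant, and which reflects exactly the mismatch between $1$-homogeneity of $\zw$ and positive $1$-homogeneity of $g$ that motivated the introduction of the half-invariance concept in the first place.
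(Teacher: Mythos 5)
Your proof is correct and is exactly the sign-bookkeeping argument the paper has in mind (the paper itself only remarks, just before the proposition, that positive homogeneity of $g$ is automatic, and leaves the computation implicit). Both directions check out: the identity $|s|=\sgn(s)\cdot s$ together with $h_s^*\zw=s\zw$ and the nondegeneracy of $\zw$ is all that is needed, and the integrability clause indeed transfers trivially since the same tensor $J$ appears in both descriptions.
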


\begin{remark}
    Let $(M, J, g)$ be a Hermitian manifold, i.e., $J$ is a complex structure and $g$ is a Hermitian metric. The manifold $(M,J,g)$ is called a \emph{locally conformal K\"ahler manifold} if there exist an open cover $\{U_i\}_{i\in I}$ of $M$ and a family $\{f_i\}_{i\in I}$ of smooth functions $f_i\colon U_i\raa \R$, such that the local metrics
    \be\label{cmet}
    g_i=e^{-f_i}g_{|_{U_i}}
    \ee
are K\"ahler with respect to $J$.

\mn Now, let $\Omega=g(X, JY)$ be the associated $2$-form (resp., let $\Omega_i$ be associated with $g_i$). From (\ref{cmet}) we obtain $\Omega_i=e^{-f_i}\Omega_{|_{U_i}}$. The $2$-form $\Omega$ is therefore nondegenerate but generally not closed, $\xd\zW=\za\we\zW$. This leads to concepts of a \emph{locally conformal symplectic form} (cf. \cite{Vaisman:1976}), and has been extensively studied since the 1970s (see \cite{Ornea:2024} and references therein).
In \cite{Vaisman:1980}, Vaisman investigated conformal changes of an almost contact structure $(\phi, \xi, \eta, g)$, i.e., changes of the form
    \be\label{conf}
    \phi^\prime=\phi,\,\,\,\xi^\prime= e^f\xi,\,\,\,\eta^\prime=e^{-f}\eta,\,\,\,g^\prime=e^{-2f}g,
    \ee
and discussed a notion of \emph{locally conformal Sasakian manifold}.

\mn In the above locally conformal approach to K\"ahler structures, the metric is fixed, and we deform conformally the associated 2-form to get local K\"ahler structures. In our case, the situation is different. We have a fixed symplectic form on a symplectic cover of a contact manifold, and we look locally for a K\"ahler metric satisfying the homogeneity assumption additionally.
\end{remark}
\begin{example}\label{ex7.1}
Let us go back to the M\"obius band $B\to S^1$ and the corresponding symplectic $\Rt$-bundle $\sT^*B^\ti$ from Example \ref{jetm}.
We have two charts, $\tO$ and $\tU$, on $\sT^*B^\ti$, equipped with coordinates $(x,s,p,z)$ and $({x'},s',p',z')$ taking values in
$]0,1[\ti\Rt\ti\R\ti\R$ and $]1/2,3/2[\ti\Rt\ti\R\ti\R$, respectively. The transition map is
$$(x',s',p',z')=
\begin{cases}
(x,s,p,z)\quad\text{if}\quad x\in]1/2,1[\\
(x+1,-s,-p,-z)\quad\text{if}\quad x\in]0,1/2[\,,
\end{cases}
$$
and the canonical homogenous symplectic form $\zw$ in these coordinates reads
\be\label{zw}\zw=\xd s\we\zh+s\cdot\xd\zh=\xd s'\we\zh'+s\cdot\xd\zh',\ee
where $\zh=\xd z-p\xd x$ on $\tO$, and $\zh'=\xd z'-p'\xd x'$ on $\tU$.

\mn It is easy to see that the Riemannian metric $g$, given in $\tO$ by
$$g=\frac{\xd s^2}{|s|}+|s|\big((\xd p)^2+(\xd x)^2+\zh^2\big),$$
coincides on the intersection $\tO\cap\tU$ with the Riemannian metric $g'$, given in $\tU$ by
$$g'=\frac{(\xd s')^2}{|s'|}+|s'|\big((\xd p')^2+(\xd x')^2+(\zh')^2\big).$$
Consequently, we have a well-defined Riemannian metric on $\sT^*B^\ti$, which will be denoted with $g$. This Riemannian metric is positively homogeneous.
\begin{proposition}\label{prop7.7}
The Riemannian metric $g$ is compatible with the symplectic form $\zw$, and the $(1,1)$-tensor field
$$J:\sT\sT^*B^\ti\to\sT\sT^*B^\ti,\quad J=\big(\zw^\flat\big)^{-1}\circ g^\flat,$$
is a complex structure. In other words, the ingredients of $(\zw,g,J)$ on $\sT^*B^\ti$ give rise to a K\"ahlerian $\Rt$-bundle.
\end{proposition}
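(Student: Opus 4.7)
The strategy is to reduce the three assertions (compatibility, integrability, and hence the K\"ahlerian $\Rt$-bundle structure) to a local verification in a single chart. Both $g$ and $\zw$ are globally well-defined on $\sT^*B^\ti$, so $J=(\zw^\flat)^{-1}\circ g^\flat$ is globally well-defined as well. Its half-invariance is automatic: from $h_\nu^*g=|\nu|g$ and $h_\nu^*\zw=\nu\zw$ one reads off $h_\nu^*J=\sgn(\nu)\,J$, which is exactly the half-invariance needed to apply Proposition \ref{prop 7.5}.

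In the chart $\tO$ restricted to the connected component $\{s>0\}$, I would first compute $(\zw^\flat)^{-1}$ on the coordinate differentials --- these are just the Hamiltonian vector fields of $s,p,x,z$, read off directly from $\zw=\xd s\we\zh+s\,\xd\zh$ with $\zh=\xd z-p\,\xd x$. Composing with $g^\flat$, whose matrix in the coordinate basis is diagonal apart from a single $(\xd x,\xd z)$ off-diagonal entry coming from $\zh^2$, gives an explicit formula for $J$ on $\pa_s,\pa_p,\pa_x,\pa_z$, and the identity $J^2=-\id$ is then a one-line check on each basis vector. On $\{s<0\}$ the computation is identical after replacing $s$ by $|s|$ wherever it entered through $g$, and in the chart $\tU$ the same answer is obtained by the evenness of $g$ and $\zw$ under the transition $(x,s,p,z)\mapsto(x+1,-s,-p,-z)$.

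For integrability, rather than computing the Nijenhuis tensor directly, I would exhibit a frame for the $(+i)$-eigendistribution of $J$ and verify involutivity. Based on the explicit formula for $J$, the natural guess on $\tO\cap\{s>0\}$ is
\[
W \;=\; s\,\pa_s + i\,\pa_z, \qquad V \;=\; \pa_p - i\big(\pa_x + p\,\pa_z\big),
\]
and a short Lie-bracket calculation yields $[W,V]=0$; Newlander--Nirenberg then completes the argument. As a conceptual cross-check, the substitution $s=r^2/4$ on $\{s>0\}$ transforms $g$ into $\xd r^2+(r^2/4)\,g_{\R^3}$ and $\zw$ into $\xd\!\big((r^2/4)\zh\big)$, identifying the local structure with the standard Sasakian K\"ahler cone over the Darboux contact metric manifold $(\R^3,\zh,g_{\R^3})$ of Example \ref{sasa}, from which the K\"ahler conclusion also follows by the Boyer--Galicki criterion recalled in Section 6.

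The main obstacle is really just sign bookkeeping: the $|s|$ in the positively homogeneous metric and the $s$ in the $1$-homogeneous symplectic form must combine, both across the two components of each chart and across $\wt{\zF}$, to produce a half-invariant, globally defined $J$. All of this is automatic once the homogeneities are tracked carefully, but it is the one place where the present $1$-homogeneous setup differs essentially from the classical $2$-homogeneous cone construction.
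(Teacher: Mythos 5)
Your proposal is correct and follows essentially the same route as the paper: an explicit computation of $J$ in the chart $\tO$ (the paper uses the adapted frame $\n,\pa_z,\pa_p,X=\pa_x+p\pa_z$ rather than the coordinate frame, which makes the sign bookkeeping you worry about slightly cleaner), followed by exhibiting a commuting frame for an eigendistribution of the complexified $J$ and invoking involutivity/Newlander--Nirenberg. The only cosmetic slip is that, with the paper's conventions, your $W=s\,\pa_s+i\,\pa_z$ and $V=\pa_p-i(\pa_x+p\,\pa_z)$ span the $(-i)$-eigendistribution (the paper's $V_-$) rather than the $(+i)$ one, which of course does not affect the integrability argument.
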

\begin{proof}
We will work with local coordinates in $\tO$. The form of $\zw$ in (\ref{zw}) suggests using $\xd s/s,\zh,\xd p,\xd x$ as a basis of invariant 1-forms on $\tO$. The dual basis of vector fields is $\n=s\pa_s,\pa_z,\pa_p,X=\pa_x+p\pa_z$. Let us recall that $\pa_p$ and $X$ span $\ker(\zh)$. We have
\beas
&& g^\flat(\n)=\sgn(s)\xd s,\ g^\flat(\pa_z)=|s|\zh,\ g^\flat(\pa_p)=|s|\xd p,\ g^\flat(X)=|s|\xd x;\\
&& \zw^\flat(\n)=-s\,\zh,\ \zw^\flat(\pa_z)=\xd s,\ \zw^\flat(\pa_p)=s\,\xd x,\ \zw^\flat(X)=-s\,\xd p.
\eeas
Hence, $J=\big(\zw^\flat\big)^{-1}\circ g^\flat$ acts as follows:
$$J(\n)=\sgn(s)\pa_z,\ J(\pa_z)=-\sgn(s)\n,\ J(\pa_p)=-\sgn(s)X,\ J(X)=\sgn(s)\pa_p\,.
$$
In other words,
\be\label{J1} J=\sgn(s)\Big(\frac{\xd s}{s}\ot\pa_z+\xd x\ot\pa_p-\xd p\ot X-s\zh\ot\pa_s\Big)\,.
\ee
The transformation $(x,s,p,z)\mapsto(x+1,-s,-p,-z)$ leaves the above $(1,1)$-tensor field $J$ invariant, so (\ref{J1}) defines properly a globally defined tensor field $J$ on $\sT^*B^\ti$. It is easy to see that $J^2=-\id$, so we deal with an almost complex structure.

The complexified tangent bundle $\sT^\C\sT^*B^\ti$ splits into two $J$-invariant subbundles $V_\pm$, $\sT^\C\sT^*B^\ti=V_+\op V_-$. The subbundle
$V_+$ consists of eigenvectors with the eigenvalue $i$, and is spanned by the complex vector fields $A_1=\sgn(s)\pa_z+i\n$ and $A_2=\sgn(s)\pa_p+iX$, while the subbundle $V_-$ consists of eigenvectors with the eigenvalue $-i$, and is spanned by the complex vector fields $B_1=\n+i\sgn(s)\pa_z$ and $B_2=X+i\sgn(s)\pa_p$.
It is easy to check that all the vector fields $A_1,A_2,B_1,B_2$ pairwise commute, so $V_\pm$ is involutive, and $J$ is a complex structure.

\end{proof}
\end{example}

\section{Generalized Sasakian manifolds}
Instead of proposing an \emph{ad hoc} definition of a Sasakian-like structure on a general contact manifold $(M,C)$, i.e., a manifold endowed with a contact distribution, we will derive a conceptual approach to this question \emph{via} homogeneous K\"ahler structures on $\Rt$-bundles. To this end, we will characterize almost K\"ahlerian and K\"ahlerian $\Rt$-bundles in terms of structures on the base contact manifold. To simplify the notation, we will denote covariant tensors on $M$ and their pullbacks to fiber bundles with the same symbol. Let us start with the cooriented case.

\subsection{K\"ahlerian $\R_+$-bundles}\label{sec8}
To simplify notation, let us fix the convention that for the trivial $\R_+$-principal bundle $\cM=M\ti\R_+$ with the vertical coordinate $s\in\R_+$, we will naturally identify vector fields $X$ on $M$ with their pullbacks $X(x,s)=X(x)$ on $\cM$, and differential forms on $M$ with their pullbacks to $\cM$. These pullbacks are $\R_+$-invariant.

\mn Let $\zt:\cM\to M$ be a principal $\R_+$-bundle equipped with a homogeneous symplectic structure $\zw$ and a homogeneous Riemannian structure $g$.

According to (\ref{sffc}) and Proposition \ref{hRm}, in the trivialization $\cM=M\ti\R_+$ associated with the calibration function $s=g(\n,\n)$, the symplectic and Riemannian forms read $\zw=s\zw_0$ and $g=sg_0$, where
\be\label{sss}\zw_0(x,s)=\big(\xd s/s\big)\we\zh(x)+\xd\zh(x)
\ee
and
\be\label{Sas} g_0(x,s)=\big(\xd s/s+\zm(x)\big)^2+ g_M(x)\,,
\ee
respectively, where $\zh$ is a contact form on $M$, $\zm$ is a 1-form on $M$, and $g_M$ is a Riemannian metric on $M$.
Put $C=\ker(\zh)$ to be the contact distribution associated with $\zh$. Let $\cH=\n^\perp$ be a horizontal distribution on $\cM$, being the orthogonal complement of the vertical distribution. In other words, $\cH$ represents a principal connection on the $\R_+$-bundle $\cM$ with the connection 1-form $\zz(x,s)=\xd s/s+\zm(x)$ (cf. (\ref{cof})).
The horizontal lift $X^h$ of a vector field $X$ on $M$ is given by
\be\label{hor}X^h=X-\zm(X)\n.\ee
Denote with $C^h$ the distributions spanned by the horizontal lifts of vector fields from $C$, and with $g_C$ the restriction of the metric $g_M$ to $C$.

\mn Writing $g_0$ and $\zw_0$ in terms of generators $X^h$ and $\n$ of $\sT\cM$, where $X$ are vector fields on $M$, we easily get the following.
\begin{proposition}\label{fromulae}
The metric $g$ and the 2-form $\zw$ are uniquely characterized by the formulae:
\begin{align*}
& g_0(\n,\n)=1;\\
& g_0(\n,X^h)=0;\\
& g_0(X^h,Y^h)=g_M(X,Y);\\
& \zw_0(X^h,Y^h)=\big(\xd\zh+\zm\we\zh\big)(X,Y);\\
& \zw_0(\n,X^h)=\zh(X).
\end{align*}
Moreover, $[\n,X^h]=0$ and
\be\label{brc} [X^h,Y^h]=[X,Y]^h+\xd\zm(X,Y)\n.
\ee
\end{proposition}
\no Note that $\xd\zm$ is the curvature form of the connection.

\mn Define now a $(1,1)$-tensor $J:\sT\cM\to\sT\cM$ from the identity,
\be\label{e1}g(X,Y)=\zw\big(X,J(Y)\big).\ee
The compatibility of $g$ and $\zw$ reads $J^2=-\id$, and means that our structure $(\cM,g,\zw)$ is homogeneous almost K\"ahlerian. Clearly, it is a K\"ahler structure if and only if, additionally, $N_J=0$.

\mn The following theorem describes almost K\"ahlerian $\R_+$-bundles.
\begin{theorem}\label{main1} We have $J^2=-\id$ if and only if $J^2(\n)=-\n$ and there is a complex structure $\pc:C\to C$, $\pc^2=-\id_C$, on the vector bundle $C$, such that
\be\label{pc} J(X^h)=\bl\pc(X)\br^h,
\ee
for all $X\in C$.

\mn In this case, $\pc$ is a Levi structure on the contact manifold $(M,\zh)$, with the Levi form equal to $g_C$,
\be\label{lev}
g_C(X,Y)=\xd\zh\big(X,\pc(Y)\big),
\ee
and $J(\n)=A^h$, where the vector field $A$ on $M$ is uniquely determined by the conditions
\be\label{u}
\xd\zh(A,X)=\zm(X),\quad \zh(A)=1,
\ee
for all $X\in C$. Equivalently,
\be\label{u1}
g_M(A,\cdot)=\zh,\quad \zh(A)=1.
\ee
In particular, the splitting $\sT\cM=W\op C^h$, with $W=\on{span}\bk{\n}{A^h}$, is $J$-invariant and $g$-orthogonal, and
\be\label{aK}\xd\zh\big(\pc(X),\pc(Y)\big)=\xd\zh(X,Y).\ee
\end{theorem}
\begin{proof} Suppose $J^2=-\id$. In particular, $J^2(\n)=-\n$ and out of (\ref{e1}), 
$$g\bl J(\n),\n\br=\zw\bl J(\n),J(\n)\br=0,$$
so $J(\n)$ is $g$-orthogonal to $\n$. For $X\in C$, we also have
$$0=-\zh(X)=\zw\bl X^h,\n\br=g\bl X^h,-J(\n)\br,$$
so $J(\n)$ is $g$-orthogonal to $C^h$, thus
$$W=\on{span}\bk{\n}{J(\n)}$$
is orthogonal to $C^h$. The latter is $J$-invariant and $J$ is an isometry, thus $C^h$ is also $J$-invariant,
$$ J(C^h)=C^h.$$
Therefore, we have a $g$-orthogonal splitting
\be\label{spli}
\sT\cM=W\oplus C^h=\la \n\ran\oplus\la J(\n)\ran\oplus C^h.
\ee
If $X\in C$, then $J(X^h)\in C^h$, and $J(X^h)$ is $\R_+$-invariant ($J$ respects the invariance), so $J(X^h)=Y^h$ for some $Y\in C$. This implies that there is a VB-morphism $\pc:C\to C$ such that $J(X^h)=\bl\pc(X)\br^h$ and $\pc^2=-\id_C$. The converse is obvious, as $J^2(\n)=-\n$ and $J(X^h)=\bl\pc(X)\br$ imply $J^2=-\id$.

\mn Let us assume now that $J^2=-\id$. We know that $J(\n)\in\cH$, so due to the invariance, $J(\n)=A^h$ for some vector field $A$ on $M$. Moreover, $A$ is $g_M$-orthogonal to $C$. Consequently, for $X,Y\in C$,
$$g_C(X,Y)=g_M(X,Y)=g_0(X^h,Y^h)=\zw_0\bl X^h,\pc(Y)^h\br=\xd\zh\bl X,\pc(Y)\br,$$
that proves (\ref{lev}), and (\ref{aK}) follows automatically (see Proposition \ref{pin}(a)). We also have,
\begin{align*} &\zw(X^h,A^h)=\zw\bl X^h,J(\n)\br=g(X^h,\n)=0,\\
&\zh(A)=\zw_0(\n,A^h)=\zw_0\bl\n,J(\n)\br=g_0(\n,\n)=1,
\end{align*}
that defines $A^h$ as the unique vector field $\zw$-orthogonal to $\cH$ satisfying
\be\label{B}\zw(\n,A^h)=\zh(A)=1.
\ee
In particular, for $X\in C$, we have
$$0=\zw(X^h,A^h)=\bl\xd\zh+\zm\we\zh\br(X,A)=\xd\zh(X,A)+\zm(X),$$
so
\be\label{B1} \xd\zh(X,A)=\zh([A,X])=-\zm(X),
\ee
for all $X\in C$, and
$$\|A\|^2=g_M(A,A)=g_0(A^h,A^h)=\zw_0\bl A^h,J(A^h)\br=\zw_0(A^h,-\n)=\zh(A)=1,$$
thus $A$ is the unit vector in the metric $g_M$ satisfying $g_M(A,\cdot)=\zh$.

\end{proof}
\no Note that the restriction $\zm_C$ of $\zm$ to $C$ is determined by the metric,
$$g_M(A,[A,X])=-\zm(X).$$ 
The above theorem gives a one-to-one correspondence between homogeneous almost-K\"ahler structures on the trivial $\R_+$-bundles $\cM=M\ti\R_+$ and geometric structures on $M$ consisting of a contact form $\zh$, a Riemannian metric $g_M$, a 1-form $\zm$, and a Levi structure $\pc$ on $(M,\zh)$, satisfying the following conditions:
\begin{align}\label{M1}
& g_C(X,Y)=\xd\zh\bl X,\pc(Y)\br;\\
& g_M=\zh\ot\zh+\gc,\label{M2}
\end{align}
where $\gc$ is the unique extension of $g_C$ to a symmetric tensor on $\sT M$ vanishing on $L=C^\perp$. This suggests the following concept generalizing contact metric structures (cf. Definition \ref{g-as}).
\begin{definition}
\
\begin{itemize}
\item Let $g_M$ be a Riemannian metric on a cooriented contact manifold $(M,\zh)$, and let $L=C^\perp$ be the line subbundle in $\sT M$ which is $g_M$ orthogonal to $C=\ker(\zh)$. We call $g_M$ a \emph{Levi metric} if $$\|Z\|^2=g_M(Z,Z)=|\zh(Z)|^2$$
    for any vector field $Z$ taking values in $L$, and the map $\pc:C\to C$, defined for vector fields $X,Y\in C$ by $$g_M(X,Y)=\xd\zh\bl X,\pc(Y)\br,$$
    is a complex structure on $C$, $\pc^2=-\id_C$. We will call $\pc$ the \emph{associated Levi structure}.

    \mn A contact manifold $(M,\zh)$ equipped with a Levi metric $g_M$ we call a \emph{contact metric structure}, and the triple $(M,\zh,g_M)$ a \emph{contact metric manifold}. We call the vector field $A$ the \emph{split vector field}.
\item A \emph{split Levi structure} on a contact manifold $(M,\zh)$ is a $(1,1)$-tensor $\bp:\sT M\to\sT M$ such that $\bp(\sT M)=C=\ker{\zh}$, and whose restriction $\pc$ to $C$ is a Levi structure. In this case, we call the triple $(M,\zh,\bp)$ a \emph{split Levi manifold}.
    \end{itemize}
\end{definition}
\no Any Levi metric $g_M$ on a contact manifold $(M,\zh)$ is of the form (\ref{M2}), where $\gc$ is the unique extension of the tensor field $g_C$--the restriction of $g_M$ to $C$, vanishing on $L=C^\perp$. Moreover, $g_C$ satisfies (\ref{M1}), i.e., $g_C$ is the Levi form of the associated Levi structure $\pc$.

\begin{proposition} On any contact metric manifold $(M,\zh,g_M)$, there exists a distinguished vector field $A$ taking values in $L=C^\perp$ such that $\zh(A)=1$, and a distinguished 1-form $\zn$ determined by $\zn=i_A\xd\zh$. In particular, $A$ is a unit vector, $g_M(A,A)=1$.

\mn Moreover, conditions (\ref{M1}) and (\ref{M2}) define a canonical one-to-one correspondence between split Levi structures $\bp$ and Levi metrics $g_M$ satisfying $\ker(\bp)=C^\perp$. More precisely, $\bp\big|_C=\pc$ and $\bp(A)=0$. Hence, contact metric structures are the same as split Levi structures.
\end{proposition}
\no The split Levi structure $\bp$ we call \emph{associated with $g_M$}, the vector field $A$--the \emph{split vector field}, the 1-form $\zn$--the \emph{associated 1-form}, and any 1-form $\zm$ on $M$ extending $\zn\big|_C$ on $C$--a \emph{compatible connection}. Now, we can reformulate Theorem \ref{main1} as follows.
\begin{theorem}\label{akt}
Let $(M,\zh,g_M)$ be a contact metric manifold and $\zm$ be a compatible connection. Then the symplectic form $\zw$ and the Riemannian metric $g$, defined on the trivial $\R_+$-bundle $P=M\ti\R_+$ by
\bel\label{mmain1} &\zw(x,s)=\xd s\we\zh(x)+s\cdot\xd\zh(x);\\
& g(x,s)=s\Bl\big(\xd s/s+\zm(x)\big)^2+ g_M(x)\Br,\nn
\eel
constitute a homogeneous almost-K\"ahler structure. Moreover, any homogeneous almost K\"ahler structure on an $\R_+$-principal bundle is of this form in a certain trivialization.
\end{theorem}

\mn Now, let us pass to studying the integrability condition for homogeneous almost K\"ahler structures on $\cM=M\ti\R_+$.
\begin{theorem}\label{main2}
Suppose (\ref{mmain1}) define an almost K\"ahler structure. This structure is K\"ahlerian (integrable) if and only if the connection 1-form $\zm$, the vector field $A$, and the extension $\bp:\sT M\to\sT M$ of $\pc$ with $i_A\bp=0$, satisfy the conditions
\bel\label{S1}
& N_\bp(X,Y)+\xd\zh(X,Y)A=0;\\
& N_\bp(A,X)=0;\label{S2}\\
& \xd\zm(A,X)=\zm\bl\pc(X)\br.\label{S3}
\eel
\end{theorem}
\begin{proof}
The condition for the almost K\"ahlerian structure to be K\"ahlerian is the vanishing of the Nijenhuis torsion $N_J$. Since $\sT\cM=W\op C^h$ is a $J$-invariant splitting, we can discuss $J_W=J\big|_W$ and $J_C=\phi_C$ separately, and then look at $N_J(X,Y)$ for $X\in C$ and $Y\in W$. We will also use the $g_M$-orthogonal decomposition
$$\sT M=C\oplus\la A\ran,$$
and decompose any vector field $X$ on $M$ as
$$X=X_0+g_M(A,X)A=X_0+\zh(X)A.$$
In particular (cf. (\ref{B1})), for any $X,Y\in C$,
\begin{align}\label{split}
&[A,X]=[A,X]_0+\zh\bl[A,X]\br A=[A,X]_0-\zm(X) A;\\
&[X,Y]=[X,Y]_0+\zh\bl[X,Y]\br A=[X,Y]_0-\xd\zh(X,Y)A.\label{split1}
\end{align}
\no Since $W$ has rank 2, $\cN_{J_W}$ is automatically $0$, as $N_J\big(X,J(X)\big)$ is always $0$.

\mn Finally, let us compute the Nijenhuis torsion $N_J(X^h,Y^h)$ for $X,Y\in C$. We have,
\begin{align}\label{t1}
&[J(X^h),J(Y^h)]-[X^h,Y^h]=[\pc(X)^h,\pc(Y)^h]-[X^h,Y^h]\\
&=\Bl[\pc(X),\pc(Y)]-[X,Y]\Br^h+\Bl\xd\zm\bl\pc(X),\pc(Y)\br-\xd\zm(X,Y)\Br\n.\nn
\end{align}
Because $\pc$ is a Levi structure on $(M,\zh)$, it is an almost CR structure and we have (\ref{acr}),
$$[\pc(X),\pc(Y)]-[X,Y]\in C.$$
Similarly, we get
\begin{align*}
&[J(X^h),Y^h]+[X^h,J(Y^h)]\\
&=\Bl[\pc(X),Y]+[X,\pc(Y)]\Br^h+\Bl\xd\zm\bl\pc(X),Y\br-\xd\zm\bl X,\pc(Y)\br\Br\n,
\end{align*}
where
$$[\pc(X),Y]+[X,\pc(Y)]\in C,$$
so
\begin{align}\label{t2}
&J\Bl[J(X^h),Y^h]+[X^h,J(Y^h)]\Br=\Bl\pc\bl[\pc(X),Y]+[X,\pc(Y)]\br\Br^h\\
&+\Bl\xd\zm\bl\pc(X),Y\br+\xd\zm\bl X,\pc(Y)\br\Br A^h.\nn
\end{align}
Comparing (\ref{t1}) with (\ref{t2}), we get
\begin{align}\label{(c)}
&[\pc(X),\pc(Y)]-[X,Y]=\pc\bl[\pc(X),Y]+[X,\pc(Y)]\br;\\
&\xd\zm\bl\pc(X),\pc(Y)\br=\xd\zm\bl X,Y\br.\label{(d)}
\end{align}
The third identity,
$$
\xd\zm\bl\pc(X),Y\br+\xd\zm\bl X,\pc(Y)\br=0,
$$
is equivalent to (\ref{(d)}).

\mn Note that (\ref{(c)}) simply means (cf. (\ref{Nt})) that the almost CR structure $(C,\pc)$ is integrable, $\cN_{\pc}=0$. Denoting with $\bp:\sT M\to\sT M$ the extension of $\pc$ satisfying the condition $\bp(A)=0$, we also have
\be\label{nfc} \cN_\bp(X,Y)=0.
\ee
Following the standard Sasaki condition (\ref{Ncc}), we can rewrite (\ref{(c)}) as
\be\label{Nbc}
N_\bp(X,Y)=-\xd\zh(X,Y)A.
\ee
It remains to consider `mixed terms', so to check under what conditions $N_J(X^h,\n)=0$ for all $X\in C$. We have
\beas N_J(X^h,\n)&=&\big([J(X^h),J(\n)]-[X^h,\n]\big)-J\big([J(X^h),\n]+[X^h,J(\n)]\big)\\
&=&[J(X^h),J(\n)]-J\big([X^h,J(\n)]\big)\\
&=&[\big(\pc(X)\big)^h,A^h]-J\big([X^h,A^h]\big),
\eeas
as $\n$ commutes with all vector fields of the form $X^h$. We have
\begin{align}\label{first}
[\pc(X)^h,A^h]&=[\pc(X),A]^h+\xd\zm\bl\pc(X),A\br\n\\
&=[\pc(X),A]_0^h-\zm\bl\pc(X)\br A^h+\xd\zm\bl\pc(X),A\br\n.\nn
\end{align}
Similarly,
\begin{align}\label{second}
J[X^h,A^h]&=J\Big([X,A]_0^h-\zm(X)A^h+\xd\zm(X,A)\n\Big)\\
&=\bl\pc([X,A])\br_0^h+\zm(X)\n+\xd\zm(X,A)A^h.\nn
\end{align}
Combining (\ref{first}) and (\ref{second}), we get,
\begin{align}\label{aa}
&[\pc(X),A]_0=\pc([X,A]_0);\\
&\xd\zm(A,X)=\zm\bl\pc(X)\br.\label{bb}
\end{align}
The third identity,
$$\zm(X)=\xd\zm\bl\pc(X),A\br,$$
is equivalent to (\ref{bb}) under the substitution $X:=\pc(X)$, and (\ref{aa}) can be viewed as an analog of (\ref{Kcond}). Alternatively, (\ref{aa}) is equivalent to
\be\label{nbp}\cN_\bp(A,X)=0,
\ee
for all $X\in C$. Indeed, as $\bp(A)=0$, we have
$$\cN_\bp(A,X)\br=-\bp\bl[\bp(X),A]-\bp([X,A])\br,
$$
and
$$\bp\Bl [\bp(X),A]-\bp([X,A])\Br=[\pc(X),A]_0-\pc([X,A]_0)=0.
$$
Let us observe that condition (\ref{aa}) implies (\ref{(d)}), making the latter superfluous. Indeed, we also have
$$[A,X]=[A,X]_0-\zm(X)A, \quad\text{and}\quad [X,Y]=[X,Y]_0-\xd\zh(X,Y)A.$$
With this notation, the $A$-component of the Jacobi identity
$$[A,[X,Y]]+[X,[Y,A]]+[Y,[A,X]]=0,$$
gives
\begin{equation}\label{eqj}
\xd\zm(X,Y)=-A\bl\xd\zh(X,Y)\br+\xd\zh([A,X]_0,Y)+\xd\zh(X,[A,Y]_0)+\zm(A)\xd\zh(X,Y).
\end{equation}
Now replace $X,Y$ by $\pc(X),\pc(Y)$ in (\ref{eqj}) and use both (\ref{aa}) and (\ref{aK}) to get
$$\xd\zm\bl\pc(X),\pc(Y)\br=\xd\zm(X,Y).$$
The complete set of conditions assuring integrability of the almost-K\"ahler structure is then exactly (\ref{S1}), (\ref{S2}), and (\ref{S3}).

\end{proof}
\no Note that (\ref{S1}) and (\ref{S2}) can be unified in one condition,
\be\label{Slike}\fn_\bp:=N_\bp+\Bl\xd\zh\circ(\bp\ot\bp)\Br\ot A=0.\ee
We will call $\fn_\bp$ the \emph{modified Nijenhuis tensor} of the split Levi structure $\bp$ (or the corresponding contact metric structure).
\begin{remark}
Note that finding the connection 1-form appearing in the K\"ahler case is not trivial. The part $\zm_C$ is known because of (\ref{B1}), $\xd\zh(A,X))=\zm(X)$. However, (\ref{S3}) puts strong restrictions on a possible extension. Of course, any extension is determined by fixing the value $f=\zm(A)$. Condition (\ref{bb}) is then a differential equation for $f$. More precisely, writing
$$
\zm=\zm_C+f\eta ,
$$
(\ref{bb}) becomes
$$X(f)=A(\zm_C(X))-\zm_C([A,X]_0)-f\zm_C(X)+\zm_C\bl\pc(X)\br,
$$
where $X$ runs over all sections of $C$. Define
$$\gamma(X)=A(\zm_C(X))-\zm_C([A,X]_0)+\zm_C\bl\pc(X)\br.
$$
Then $\zg$ is a section of $C^*$ and the equation for $f$ reads
\be\label{ef}
\xd_Cf+f\zm_C=\gamma .
\ee
Since the contact distribution $C$ is bracket-generating, this equation is overdetermined, and the existence and the uniqueness of solutions are open and nontrivial questions. We have proved the existence of a solution in dimension 3. We skip the proof here, since this problem is not in the main track of this paper.
\end{remark}
\begin{example}\label{eflat} The case $\zm\big|_C=0$ is of particular interest. We have $A=\zx$ and $\zm=a\zh$, for a function $a$. One can easily prove that $a$ must be constant. This case can be viewed as a straight generalization of Sasaki structures, as for $a=0$ we are in the situation of a classical Sasaki structure. Condition (\ref{S3}) is trivially satisfied, although the curvature $\xd\zm=a\xd\zh$ can be nontrivial.
\end{example}
\no Following the idea of Boyer and Galicki \cite{Boyer:2008} to identify Sasaki structures with some K\"ahler structures, we propose the following.
\begin{definition} A \emph{generalized Sasaki structure} is a split Levi structure $(M,\zh,\bp)$ equipped with a compatible connection 1-form $\zm$ such that $\fn_\bp=0$ and $\zm$ satisfies (\ref{S3}), where $A$ is the corresponding split vector field. Such connection 1-forms we call \emph{K\"ahlerian connections}.
\end{definition}
\no In this terminology, Theorem \ref{main2} just says that there is a one-to-one correspondence between generalized Sasaki structures on a contact manifold $(M,\zh)$ and homogeneous K\"ahler structures on its symplectic cover $\cM=M\ti\R_+$. Note that the presence of the connection 1-form $\zm$ is in this definition unavoidable. We have shown that homogeneous K\"ahler structures necessarily involve principal connections which, therefore, are part of the game. If one wants to work without connections as an additional input, then one could remain with the flat case $\zm=0$ ($\xd\zm=0$ implies $\zm=0$ due to (\ref{S3})), thus the standard Sasaki structures, or a more general but canonical choice $\zm=a\zh$ (cf. Example \ref{eflat}).

\section{The case of general contact structures}
For a general contact manifold $(M,C)$, where $C$ is a contact distribution, there exists, generally, no global contact form $\zh$ such that $C=\ker(\zh)$, and no such contact form is distinguished, even if the contact manifold is trivializable (coorientable). However, in the presence of a Riemannian metric $g_M$ on $M$ the situation changes significantly because of the existence of a canonical calibration (see Section \ref{sec-cal}). Let us start with the following observation.
\begin{proposition}\label{clb} Let $(M,C)$ be a contact manifold and $(P,\zw)$ be its symplectic cover. Any Riemannian metric $g_M$ on $M$ determines a unique calibration $\s$ on $P$, thus a submanifold $\tM$ of $P$ defined as the 0-locus of the function $\s-1$. This submanifold is a 2-sheet cover of $M$ with respect to the bundle projection $\zt:P\to M$, and the multiplication $h_{-1}$ by $(-1)$ in $P$ induces a canonical diffeomorphism $\zs:\tM\to\tM$ intertwining the two local sheets.

\mn The restriction $\tzh$ of the Liouville 1-form $\zvy=i_\n\zw$ to $\tM$ is a genuine contact form on $\tM$ satisfying $\zs^*(\tzh)=-\tzh$, whose contact distribution $\tC$ is the pullback of $C$, $\sT\zt(\tC)=C$. Hence, the contact form $\tzh$ can be viewed as a paired contact form $|\zh|$ on $M$.
\end{proposition}
\no The cooriented contact manifold $(\tM,\tzh)$ (equivalently, the paired contact form $|\zh|$ on $M$) we will call the \emph{$g_M$-induced trivialization of $(M,C)$}, and the calibration $\s$--the \emph{$g_M$-calibration} (cf. Theorem \ref{calibration}).
\begin{proof}
We use the metric $g_M$ to define the orthogonal complement $L=C^\perp$ of the contact distribution, which is a rank-1 distribution canonically isomorphic with the line bundle $\sT M/C$. The metric $g_M$ restricted to $L$ yields, in turn, a VB-metric on $L$, thus an isomorphism $L\simeq L^*$. Since $P=\bl L^*\br^\ti$ (cf. Theorem \ref{main}), the corresponding metric norm $\|\cdot\|_{L^*}$ on $L^*$ defines a calibration $\s$ on $\zt:P\to M$: for $p\in P_x\subset L^*_x$, we have
$$\s(p)=\frac{|p(v_x)|}{\nm{v_x}},$$
where $v_x$ is any vector in $L^\ti_x$.

\mn The calibration defines a pair of local trivializations,
$$\phi_\pm:P\to M\ti\Rt,\quad \phi_\pm(p)=\bl\zt(p),\pm s\br,$$
where $|\pm s|=\s$. Take one of them, say $s$. This trivialization determines a contact 1-form $\zh$ such that
$$\zw=\xd s\we\zh+s\xd\zh,$$
and the Liouville 1-form $\zvy=i_\n\zw$ reads $\zvy=s\zh$. The submanifold $\tM$ is defined by $|s|=1$, and the Liouville 1-form restricted to $\tM$ is $\zh$ on the $s=1$ part, and $-\zh$ on the $s=-1$ part, defining, therefore, a contact form $\tzh$ on $\tM$.

Of course, $\tM$ does not depend on the choice of the sign before $s$, and the restriction of $\zvy$ to $\tM$ is uniquely defined, so also does not depend on this sign: if $s$ goes to $-s$, $\zh$ goes to $-\zh$, and on $-s=1$ we get $-\zh_s$, so $\tzh$ remains the same. By homogeneity, the diffeomorphisms $h_{-1}$ on $P$, given by the action of $-1\in\Rt$, changes $\zw$ to $-\zw$, so its restrictions $\zs$ to $\tM$ changes $\tzh$ into $-\tzh$.

\mn Finally, as the contact distribution $C$ is the projection of $\ker(\zvy)$ onto $M$, it is easy to see that the projection of $\ker(\zvy)$ gives $\tC=\ker(\tzh)$.

\end{proof}
\begin{remark}\label{ttriv} Since homogeneous structures on $P$ are, in a local trivialization, completely determined by any of two connected components of $\zt^{-1}(U)=U\ti\Rt$, we can use the calibration $\s$ to define an \emph{$\s$-trivialization} as the trivial $\R_+$-bundle $U\ti\R_+$ with the trivialization of each component $U\ni y\mapsto\bl\zt(y),\s(y)\br$. In other words, if a calibration is given, we can actually reduce our considerations to trivial $\R_+$-bundles.
\end{remark}
\no In the presence of a Riemannian metric, the idea of a contact metric structure also makes sense for a general contact manifold $(M,C)$.
\begin{definition}\label{defmai}
Let $g_M$ be a Riemannian metric on a contact manifold $(M,C)$ and let $(\tM,\tzh)$ be the induced covering of $(M,C)$. We call $g_M$ a \emph{Levi metric} if the pullback $g_\tM$ of $g_M$ to $\tM$ is a Levi metric for $(\tM,\tzh)$. In this case, we call $(M,C,g_M)$ a \emph{contact metric manifold}, and the cooriented contact metric manifold $(\tM,\tzh, g_\tM)$--the \emph{associated trivialization}.
\end{definition}\label{defmai1}
\no The corresponding split vector field $\tA$ and the split Levi structure $\tbp$ represent a paired vector field $|A|$ and a paired $(1,1)$-tensor field $|\bp|$ on $M$. Clearly, $\fn_\bp=\fn_{-\bp}$, since simultaneously $\zh$ goes to $-\zh$ and $A$ goes to $-A$, so the condition $\fn_{|\bp|}=0$ makes a perfect sense. On the level of $\tM$, it means that $\fn_\tbp=0$. This allows for a natural definition of a generalized Sasaki structure for distributional contact structures.
\begin{definition} Let $(M,C,g_M)$ be a contact metric manifold, and $(\tM,\tzh,g_\tM)$ the associated trivialization. Let $\zm$ be a 1-form on $M$, and $\tzm$ be the pullback of $\zm$ to $\tM$. We call $(M,C,g_M,\zm)$ a \emph{generalized Sasaki structure} if its trivialization $(\tM,\tzh,g_\tM,\tzm)$ is a (cooriented) Sasaki structure. In other words, $\fn_{|\bp|}=0$ and
$$\xd\zm(\pm A,X)=\zm\bl\pm\pc(X)\br,$$
for $X\in C$.
\end{definition}
\no This way, we get the following.
\begin{theorem}\label{tmain} There is a canonical one-to-one correspondence between generalized Sasaki structures $(M,C,g_M,\zm)$ on a contact manifold $(M,C)$ and homogeneous K\"ahler structures on its symplectization $(P,\zw)$. In a local trivialization of $P\big|_U=U\ti\Rt$ associated with the $g_M$-calibration, the corresponding Riemannian metric reads
$$g(x,s)=s\Bl\big(\xd s/s+\zm(x)\big)^2+ g_M(x)\Br.$$
\end{theorem}
\no The corresponding homogeneous K\"ahler structure is called the \emph{K\"ahlerianization} of the generalized Sasaki structures.

\section{Sasakian products}
The Cartesian product of two Sasakian manifolds is even-dimensional, so it cannot carry any contact structure. The \emph{contact product} of contact manifolds has one additional dimension. A canonical approach is related to an obvious product of symplectic $\Rt$-bundles, which are symplectizations of contact manifolds \cite{Arnold:1989,Bruce:2017,Grabowski:2013}. The appropriate definition of the contact product (see \cite{Grabowska:2025}) follows from a more general concept of a product of general Jacobi structures \cite{Vitagliano:2019}, understood as \emph{Poisson $\Rt$-bundles} \cite{Bruce:2017,DiCosmo:2025,Marle:1991,Vitagliano:2018}. This point of view was used in \cite{DiCosmo:2025} for an approach to the Jacobi sigma models.

\mn Instead of proposing an \emph{ad hoc} definition of a Sasakian product of generalized Sasaki manifolds, we will derive a conceptual approach to this question \emph{via} homogeneous K\"ahler structures on $\Rt$-bundles, since there is a canonical one-to-one correspondence between generalized Sasaki structures on a contact manifold $(M,C)$ and homogeneous K\"ahler structures on its symplectic cover $(P,\zw)$.

\mn It is well known that the Cartesian product of two (almost) K\"ahlerian manifolds is canonically again (almost) K\"ahler. The point is that if the ingredients are homogeneous on $\Rt$-bundles $P_i$, $i=1,2$, then the product is also homogeneous on the $\Rt$-bundle $P=P_1\ti P_2$ with the diagonal $\Rt$-action, thus defines a generalized Sasaki structure on the base--the product Sasaki structure we are looking for. A variant of this idea for quasi-regular Sasakian manifolds has been proposed in \cite{Boyer:2007} (see also \cite{Wang:1990}). Having distinguished calibrations, we have distinguished paired contact forms on the base manifolds; it is enough to reduce ourselves to the case of cooriented contact manifolds. In the case of cooriented contact structures, it is sufficient to consider $\R_+$-bundles instead of $\Rt$-bundles, and the corresponding product contact forms (more generally, the product of Jacobi structures) have already been defined in \cite{Ibanez:1997}. The contact product distribution is canonical, but the choice of the product contact form in the conformal class made in \cite{Ibanez:1997} seems to be done \emph{ad hoc}.

\mn To be more precise, let us start with the products of $\Rt$-principal bundles, $\zt_i:P_i\to M_i$, $i=1,2$, with the principal $\Rt$-actions $h^i$. The Cartesian product $P=P_1\ti P_2$ is again an $\Rt$-principal bundle, with the diagonal principal $\Rt$-action $h$,
\be\label{r-act}
h_t(y_1,y_2)=\big(h^1_{t}(y_1), h^2_{t}(y_2)\big).
\ee
The dimension of the base $M=M_1\ti^!M_2=P_1\ti P_2/\Rt$ of this product's principal bundle is $\dim(M_1)+\dim(M_2)+1$. This product's principal bundle $\zt:P\to M$ was denoted
\be\label{prP}\zt:P_1\ti^! P_2\to M_1\ti^!M_2\ee
in \cite{Grabowska:2025}. The Liouville vector field $\n$ of this $\Rt$-action is the sum $\n=\n_1+\n_2$ of the Liouville vector fields $\n_i$ on $P_i$, $i=1,2$. Here and further, we use the convention in which a tensor field on factors of a Cartesian product is also interpreted as the corresponding tensor field on the product, trivially extended to the other factor.

\begin{remark}\label{rpr}The smooth manifold $M_1\ti^!M_2$ looks like an $\Rt$-principal bundle over $M_1\ti M_2$ \cite{Zapata-Carratala:2020}, however, not in a strict sense. Actually, $M_1\ti^!M_2$ is a principal  $G=(\Rt\ti\Rt)/\Rt$-bundle, where $\Rt$ is embedded in the product as the subgroup of diagonal elements. This is because the quotient group $G$, although isomorphic to $\Rt$, does not have a privileged $\Rt$-parametrization. One can consider the parametrizations coming from the left or the right factor in $\Rt\ti\Rt$, but we will prefer quotients of parametrizations. They are not strictly canonical, as one cannot distinguish $t=s_1/s_2$ from $r=s_2/s_1$.
\end{remark}
\no Suppose now that the $\Rt$-principal bundles $P_1$ and $P_2$ are symplectic covers of contact manifolds $(M_i,C_i)$, $i=1,2$, with homogeneous symplectic forms $\zw_1$ and $\zw_2$. The Cartesian product $P_1\ti P_2$ carries the canonical symplectic form
$$(\zw_1+\zw_2)(y_1,y_2)=\zw_1(y_1)+\zw_2(y_2),$$
which is also homogeneous with respect to the diagonal $\Rt$-action. Hence, the product $M_1\ti^! M_2$ carries a canonical contact structure $C=C_1\tip C_2$, and we obtain a canonical \emph{contact product} $(M_1\tip M_2,C_1\tip C_2)$ of contact manifolds $(M_i,C_i)$, $i=1,2$.
\begin{remark}
In Section \ref{Sec2}, we made clear that there is a canonical correspondence between line bundles and $\Rt$-principal bundles, which defines an equivalence of the corresponding categories (see Remark \ref{prodlb}). Let $L_i\raa M_i$ be a line bundle corresponding to the $\Rt$-bundle $P_i$, $i=1,2$. As the product $P_1\ti^! P_2$ is again an $\Rt$-principal bundle, there exists the corresponding line bundle $L_1\ti^! L_2$, understood as the product in the category of line bundles. In \cite{Schnitzer:2023,Zapata-Carratala:2020}, the authors discuss this product and its properties in detail.
\end{remark}
\begin{example}\label{pks}
Take two cooriented contact manifolds $(M_i,\zh_i)$ and their  symplectic covers $(P_i,\zw_i)$, $i=1,2$, where $P_i=M_i\ti\R_+$ with the fiber coordinate $s_i$ and
\be\label{zw1}\zw_i(x_i,s_i)=\xd s_i\we\zh_i(x_i)+s_i\cdot\xd\zh_i(x_i).\ee
On the product $P_1\ti P_2$, introduce new coordinates
$$s=s_1+s_2, \quad t=\frac{s_1}{s_2}. $$
Then,
\be\label{sstit} s_1=\frac{ts}{1+t}, \quad s_2=\frac{s}{1+t}. \ee
The coordinate $s$ is 1-homogeneous for the diagonal $\R_+$-action, while $t$ is invariant. The coordinate $s$ is canonical as the fiber coordinate on the $\Rt$-bundle $P_1\tip P_2\to M_1\tip M_2$, while $t$ is not quite; the coordinate $r=s_2/s_1=1/t$ is equivalently good. This goes back to the problem of parametrization of the group $(\R_+\ti\R_+)/\R_+$ (cf. Remark \ref{rpr}). With $s$ there is associated a canonical contact form $\zh$ on $M=M_1\tip M_2$ \emph{via} the direct analog of (\ref{zw1}). We denote this form $\zh=\zh_1\tip\zh_2$. In coordinates $(x_1,x_2,t)$ on $M=M_1\tip M_2$, the product contact form reads
\be\label{cfor}\zh = \frac{t\zh_1+\zh_2}{1+t},\ee
and the product symplectic structure is $\zw=\xd(s\zh)$. Note that the above formula completely agrees (up to the parametrization) with the one in \cite[Proposition 3.5]{Ibanez:1997}.

\mn It is easy to check that the Reeb vector field of $\zh$ is $\zx=\zx_1+\zx_2$. The kernel $C$ of $\zh$ contains $C_1\op C_2$ and two additional generators, e.g.,
$$R=\frac{\zx_1-t\zx_2}{1+t}\quad\text{and}\quad Q=t\pa_t.$$
The vector field $Q$ is the generator of a residual $\R_+$-action
$$h_\zt(x_1,x_2,t)=(x_1,x_2,\zt t)$$
on $M$, which turns $M$ into an $\R_+$-bundle. This is not quite canonical (cf. Remark \ref{rpr}), as
$$h_\zt(x_1,x_2,r)=(x_1,x_2,\zt^{-1}r),$$
so there is no way to distinguish canonically one of $Q,-Q$. Anyhow, we end up with the product contact manifold
$$(M,\zh)=\bl M_1\tip M_2,\zh_1\tip\zh_2\br.$$

\mn Similarly, we consider products of homogeneous Riemannian metrics. Consider a homogeneous metric
$$g_i=g=s_i\Bl\bl\xd s_i/s_i+\zm_i\br^2+g_{M_i}\Br,$$
on $P_i=M_i\ti\R_+$, i=1,2.
The product metric $g=g_1+ g_2$ on $P_1\ti P_2$ reads
\be\label{gg}g=s_1\Bl\bl\xd s_1/s_1+\zm_1\br^2+g_{M_1}\Br+s_2\Bl\bl\xd s_2/s_2+\zm_2\br^2+g_{M_2}\Br.\ee
Substituting (\ref{sstit}) into $g$ (cf. (\ref{gg})), gives
\be\label{gfor}g = s\left[ \left( \frac{\xd s}{s} + \frac{t\zm_1+\zm_2}{1+t} \right)^2 + g_M \right].\ee
The induced metric $g_M$ on $M$ reads
\be\label{gmfor} g_M = \frac{t}{1+t}g_{M_1} + \frac{1}{1+t}g_{M_2} + \frac{t}{(1+t)^2} \left(
\frac{\xd t}{t} + \zm_1-\zm_2 \right)^2,\ee
and the new connection form reads
\be\label{mi}\zm = \frac{t\zm_1+\zm_2}{1+t}.\ee
With the coordinate $r=1/t$, we get their `symmetric' versions,
\bes
&\zh = \frac{\zh_1+r\zh_2}{1+r};\\
&g = s\left[ \left( \frac{\xd s}{s} +
\frac{\zm_1+r\zm_2}{1+r} \right)^2 + g_M \right];\\
&g_M = \frac{1}{1+r}g_{M_1} +
\frac{r}{1+r}g_{M_2} + \frac{r}{(1+r)^2} \left( \frac{\xd r}{r} + \zm_2-\zm_1 \right)^2;\\
&\zm = \frac{\zm_1+r\zm_2}{1+r}.
\ees
As we can see, the connection 1-forms $\zm_i$ generally interact with $g_M$, which forces the need for some deformations in comparison to the flat case: $\zm_1=0$, $\zm_2=0$, $A_1=\zx_1$, $A_2=\zx_2$.
\end{example}
\no Suppose now that $(\zw_i,g_i)$ are homogeneous almost K\"ahler structures on $P_i$ with almost complex structures $J_i$, $i=1,2$. Then $(\zw_1+\zw_2,g_1+ g_2)$ is a homogeneous almost K\"ahler structure on $P=P_1\ti P_2$ with the almost complex structure $J_1+J_2$. The base manifolds $M_i$ are split Levi manifolds $(M_i,\zh_i,\bar\phi_{C_i})$ with the Levi metrics $g_{M_i}$, the split vector fields $A_i$, and the connection 1-forms $\zm_i$, $i=1,2$.

\mn According to Theorem \ref{akt}, the product contact manifold $(M,\zh)$, where $\zh$ is given by (\ref{cfor}), carries a canonical structure of a split Levi manifold with the Levi metric (\ref{gmfor}) and the product connection (\ref{mi}).

By direct calculations, we get the product split vector field in the form
$$A=A_1+A_2+(f_2-f_1)Q,$$
where $f_i=\zm_i(A_i)$, $i=1,2$, so we have $\zh(A)=1$ and $i_Ag_M=\zh$. The product contact distribution is
$$ C=C_1\oplus C_2\oplus\langle R,Q\rangle,$$
where $Q=t\pa_t$ and
$$R=\frac{A_1-tA_2}{1+t}+\frac{tf_2-f_1}{1+t}Q.$$
The product split Levi structure is
\bel\label{pbp} & \bp(A)=0, \quad \bp(R)=Q, \quad \bp(Q)=-R,\\
& \bp(X_1) = \pc_1(X_1)+\zm_1(X_1)R-\zm_1\bl\pc_1(X_1)\br Q,\nn\\
&\bp(X_2) =\pc_2(X_2)-\zm_2(X_2)R+\zm_2\bl\pc_2(X_2)\br Q,\nn
\eel
where $X_i\in C_i$, $i=1,2$.

\mn The corresponding Levi form $g_C(V,V')=\xd\zh\bl V,\pc(V')\br$ for
$$V=X+Y+aR+bQ,\quad V'=X'+Y'+a'R+b'Q,$$
where $X,X'\in C_1$ and $Y,Y'\in C_2$, reads
\bel\label{GG}&g_C(V,V') = \frac{t}{1+t}\,g_{C_1}(X,X') + \frac{1}{1+t}\,g_{C_2}(Y,Y') \\
&+\frac{t}{(1+t)^2}\Bl aa'+\bl b+\zm_1(X)-\zm_2(Y)\br\cdot\bl b'+\zm_1(X')-\zm_2(Y')\br\Br, \nn\eel
where $g_{C_i}$ is the Levi form of $\pc_i$, $i=1,2$. This is exactly the restriction of the metric (\ref{gmfor}) to $C$.

\begin{theorem}
If $(P_i,\zw_i,g_i)$ is the K\"ahlerianization of an (almost) Sasakian manifolds $(M_i,C_i,g_{M_i},\zm_i)$, $i=1,2$, then $(P_1\ti P_2,\zw_1+\zw_2,g_1+g_2)$ is the K\"ahlerianization of a uniquely determined Sasakian manifold $(M=M_1\tip M_2,\zh,g_M,\zm)$, where $M_1\tip M_2=(P_1\ti P_2)/\Rt$. In local coordinates,
$$(x_1,x_2,t)\in M_1\ti M_2\ti\R_+,$$ 
on one local sheet the associated trivialization, $t=\s_1/\s_2$,
\bes & \zh(x_1,x_2,t) = \frac{t\zh_1(x_1)+\zh_2(x_2)}{1+t},\\
& g_M(x_1,x_2,t) = \frac{t}{1+t}g_{M_1}(x_1) + \frac{1}{1+t}g_{M_2}(x_2) + \frac{t}{(1+t)^2} \left(
\frac{\xd t}{t} + \zm_1(x_1)-\zm_2(x_2) \right)^2,\\
& \zm(x_1,x_2,t) = \frac{t\zm_1(x_1)+\zm_2(x_2)}{1+t}.
\ees
\end{theorem}
\no The above described Sasakian manifold $(M=M_1\tip M_2,\zw,g_M,\zm)$ we call the \emph{Sasakian product} of the Sasakian manifolds $(M_i,C_i,g_{M_i},\zm_i)$, $i=1,2$. For the classical Sasaki manifold, we get the following.
\begin{example}
In the classical Sasaki case, $(M_i,\zh_i,g_{M_i})$, we have $A_i=\zx_i$ and $\zm_i=0$, $i=1,2$. Moreover,
$$g_{M_i}=\zh_i^2+g_{C_i},\qquad g_{C_i}=\xd\zh_i\circ(\id_{C_i}\ot\phi_{C_i}).$$
In this case, the product structures are:
$$\zh=\frac{t\zh_1+\zh_2}{1+t},\qquad A=\zx_1+\zx_2,\qquad R=\frac{\zx_1-t\zx_2}{1+t},\qquad Q=t\partial_t.$$
The product metric becomes
$$g_M = \frac{t}{1+t}g_{M_1} + \frac{1}{1+t}g_{M_2} + \frac{t}{(1+t)^2} \left(\frac{\xd t}{t}\right)^2.$$
Using
$$\frac{t}{1+t}\zh_1^2 + \frac{1}{1+t}\zh_2^2 = \zh^2 + \frac{t}{(1+t)^2}(\zh_1-\zh_2)^2,$$
we obtain
$$g_M= \zh^2 + \frac{t}{1+t}g_{C_1} + \frac{1}{1+t}g_{C_2} + \frac{t}{(1+t)^2} \left((\zh_1-\zh_2)^2 + \left(\frac{\xd
t}{t}\right)^2\right).$$
The product Levi structure can be expressed in the tensorial form as
$$\pc = \phi_{C_1}+\phi_{C_2} + R\otimes\frac{\xd t}{t} - Q\otimes(\zh_1-\zh_2).$$
\end{example}
\section{Conclusions and outlook}
Various approaches to Sasakian geometry have been proposed in the literature, but they have almost exclusively concerned cooriented contact manifolds, i.e., contact manifolds endowed with a globally defined contact form. In this paper, we introduced corresponding notions for arbitrary, possibly nontrivializable, contact manifolds $(M,C)$.

\mn Our approach is based on the general principle that contact structures, and more generally Jacobi structures, should be viewed as homogeneous symplectic or Poisson structures on principal $\Rt$-bundles \cite{Arnold:1989,Bruce:2017,Grabowska:2022,Grabowska:2023,Grabowski:2013}. Motivated by the classical description of Sasakian manifolds via K\"ahler geometry on the cone $\cM=M\times\R_+$, we developed a theory of homogeneous (almost) K\"ahlerian structures on principal $\Rt$-bundles.

\mn A key step in reducing the general situation to the classical cooriented setting was the observation that the metric $g_M$ canonically determines a class of local contact forms differing only by sign, and that this class is preserved under coordinate changes. Geometrically, this is a consequence of the fact that the line bundle orthogonal to the contact distribution $C\subset \sT M$ is canonically identified with the quotient line bundle $L=\sT M/C$. Equivalently, one obtains a canonical paired contact form $|\zh|$ on $M$. This leads naturally to a distinguished two-sheeted covering $\tM$ of $M$, equipped with a genuine global contact form.

\mn Using this reduction to the cooriented case, we obtained a complete characterization of (almost) K\"ahlerian $\Rt$-bundles in terms of principal connections on $P$ and geometric data on the underlying contact manifold. This provides a natural and conceptually well-founded definition of generalized Sasakian structures on arbitrary contact manifolds.

\mn Furthermore, employing a natural product construction in the category of principal $\Rt$-bundles, we introduced a corresponding notion of a Sasakian product of generalized Sasakian manifolds. A deeper study of the resulting category of generalized Sasakian manifolds, together with applications to canonical contact structures on first jet bundles of line bundles, will be pursued elsewhere. To the best of our knowledge, these questions have not previously been investigated.

\section{Acknowledgements}
We are grateful to the referee for a careful reading of the manuscript and for insightful comments and detailed suggestions that have substantially improved its quality. We also thank Luca Vitagliano for valuable remarks that contributed significantly to the final version of this work.

\section{Declarations}

\textbf{Funding:} Research of KG and JG funded by the National Science Centre (Poland) within the project WEAVE-UNISONO, No. 2023/05/Y/ST1/00043.

\noindent\textbf{Conflicts of interest/Competing interests:} No conflicts of interests.

\noindent\textbf{Availability of data and material:} Our manuscript has no associated data.

\noindent\textbf{Code availability:} No code required.

\noindent\textbf{Authors' contributions:} Equal.

\vskip1cm
\noindent Katarzyna Grabowska\\\emph{Faculty of Physics,
University of Warsaw,}\\
{\small ul. Pasteura 5, 02-093 Warszawa, Poland} \\{\tt konieczn@fuw.edu.pl}\\
https://orcid.org/0000-0003-2805-1849\\

\noindent Janusz Grabowski\\\emph{Institute of Mathematics, Polish Academy of Sciences}\\
{\small ul. \'Sniadeckich 8, 00-656 Warszawa, Poland}\\
{\tt jagrab@impan.pl}\\  https://orcid.org/0000-0001-8715-2370\\

\no Rouzbeh Mohseni\\\emph{Faculty of Mathematics and Computer Science, University of Łódź}\\
{\small ul. Banacha 22, 90-238 Łódź, Poland}\\
{\tt rouzbeh.mohseni@wmii.uni.lodz.pl}\\ https://orcid.org/0000-0003-3773-6268


\begin{thebibliography}{V}
\bibitem{Arnold:1989} V.~I.~Arnold,
\newblock{Mathematical Methods of Classical Mechanics,}
\newblock{\emph{Graduate Texts in Mathematics} \textbf{60}, Springer-Verlag, New York, 1989.}

\bibitem{Blair:1976} D.~E.~Blair,
\newblock{On the non-existence of flat contact metric structures,}
\newblock{\emph{Tohoku Math. J.} \textbf{28} (1976), 373--379.}

\bibitem{Blair:2010} D.~E.~Blair,
\newblock{Riemannian geometry of contact and symplectic manifolds,}
\newblock{\emph{Progr. Math.} \textbf{203},
Birkhäuser Boston, Ltd., Boston, MA, 2010.}

\bibitem{Boyer:2007} Ch.~P.~Boyer, K.~Galicki, L.~Ornea,
\newblock{Constructions in Sasakian geometry,}
\newblock{\emph{Math. Z.} \textbf{257} (2007), 907--924.}

\bibitem{Boyer:2008} Ch.~P.~Boyer, K.~Galicki,
\newblock{Sasakian geometry,}
\newblock{\emph{Oxford Math. Monogr.}, Oxford University Press, Oxford, 2008.}

\bibitem{Boothby:1958} W. M. Boothby, H. C. Wang,
\newblock{On contact manifolds,}
\newblock{\emph{Ann. of Math.,} \textbf{68} (1958), 721--734.}

\bibitem{Bruce:2017} A.~J.~Bruce, K.~Grabowska, J.~Grabowski,
\newblock{Remarks on contact and Jacobi geometry,}
\newblock{\emph{SIGMA Symmetry Integrability Geom. Methods Appl.} \textbf{13} (2017), Paper No. 059, 22 pp.}


\bibitem{Cappelletti:2015} B., Cappelletti Montana, A. da Nicola, and I. Yudin,
\newblock{Hard Lefschetz theorem for Sasakian manifolds,}
\newblock{\emph{J. Differential Geom.} \textbf{101} (2015), 47--66.}

\bibitem{DiCosmo:2025} F.~Di~Cosmo, K.~Grabowska, J.~Grabowski,
\newblock{Jacobi algebroids and Jacobi sigma models,}
\newblock{\emph{Rev. Math. Phys.} \textbf{37} (2025), 2550004 (42 pages).}

\bibitem{Geiges:2008} H.~Geiges,
\newblock{An introduction to contact topology,}
\newblock{\emph{Cambridge Stud. Adv. Math.} \textbf{109},
Cambridge University Press, Cambridge, 2008.}

\bibitem{Grabowska:2022} K.~Grabowska, J.~Grabowski,
\newblock{A novel approach to contact Hamiltonians and contact Hamilton-Jacobi Theory.}
\newblock{\emph{J. Phys. A} \textbf{55} (2022), 435204 (34pp).}

\bibitem{Grabowska:2023} K.~Grabowska, J.~Grabowski,
\newblock{Reductions: precontact versus presymplectic,}
\newblock{\emph{Ann. Mat. Pura Appl.} \textbf{202} (2023), 2803--2839.}

\bibitem{Grabowska:2023a} K.~Grabowska, J.~Grabowski,
\newblock{Contact geometric mechanics: the Tulczyjew triple,}
\newblock{\emph{Adv. Theor. Math. Phys.} \textbf{28} (2024), 599--654.}

\bibitem{Grabowska:2025} K.~Grabowska, J.~Grabowski,
\newblock{The regularity and products in contact geometry,}
\newblock{\emph{Ann. Mat. Pura Appl.(1923 -)} \textbf{205} (2026), 993--1015.}

\bibitem{Grabowska:2024}	K.~Grabowska, J.~Grabowski, M.~Ku\'s, and G.~Marmo.
\newblock{Contactifications: a Lagrangian description of compact Hamiltonian systems,}
\newblock{\emph{J. Phys. A} \textbf{57} (2024), 395204 (31pp).}

\bibitem{Grabowski:2012a} J.~Grabowski,
\newblock{Modular classes of skew algebroid relations,}
\newblock{\emph{Transform. Groups} \textbf{17} (2012), 989--1010.}

\bibitem{Grabowski:2005} J.~Grabowski, M.~Ku\'s, G.~Marmo,
\newblock{Geometry of quantum systems: density states and entanglement,}
\newblock{\emph{J. Phys. A} \textbf{38} (2005), 10217--10244.}


\bibitem{Grabowski:2006} J.~Grabowski, G.~Marmo, P.~W.~Michor.
\newblock{Homology and modular classes of Lie algebroids,}
\newblock{\emph{Ann. Inst. Fourier.} \textbf{56} (2006), 69--83.}

\bibitem{Grabowski:2009} J.~Grabowski, M.~Rotkiewicz,
\newblock{Higher vector bundles and multi-graded symplectic manifolds,}
\newblock{\emph{J. Geom. Phys.} \textbf{59} (2009), 1285--1305.}

\bibitem{Grabowski:2012} J.~Grabowski, M.~Rotkiewicz,
\newblock{Graded bundles and homogeneity structures,}
\newblock{\emph{J. Geom. Phys.} \textbf{62} (2012), 21--36.}

\bibitem{Grabowski:2013}J. Grabowski,
\newblock{Graded contact manifolds and contact Courant algebroids,}
\newblock{\emph{J. Geom. Phys.} \textbf{68} (2011), 27--58.}



\bibitem{Gray:1959} J. W. Gray,
\newblock{Some global properties of contact structures,}
\newblock{\emph{Annals of Mathematics} \textbf{69} (1959),  421--450.}

\bibitem{Greenfield:1968} S. Greenfield,
\newblock{Cauchy-Riemann equations in several variables,}
\newblock{\emph{Ann. Sc. norm. super. Pisa - Cl. sci.} \textbf{22} (1968), 275--314.}


\bibitem{Ianus:1972} S. Ianus,
\newblock{Sulle variet\'a di Cauchy-Rieman,}
\newblock{\emph{Rend. Accad. Sci. Fis. Mat. Napoli}, XXXIX (1972), 191--195.}

\bibitem{Ibanez:1997} R.~Ib\'a\~nez, M.~de León, J.~C.~Marrero, D.~Mart\'{\i}n de Diego,
\newblock{Co-isotropic and Legendre-Lagrangian submanifolds and conformal Jacobi morphisms,}
\newblock{\emph{J. Phys. A} \textbf{30} (1997), 5427--5444.}

\bibitem{Kirillov:1976} A.~A.~Kirillov,
\newblock{Local Lie algebras,}
\newblock{\emph{Russian Math. Surveys} \textbf{31} (1976), no. 4, 55--75.}


\bibitem{Lie:1890} S. Lie,
\newblock{Theorie der Transformationsgruppen Abschn. 2,}
\newblock{Leipzig: Teubner, 1890.}


\bibitem{Marle:1991} C.~M.~Marle,
\newblock{On Jacobi manifolds and Jacobi bundles,}
\newblock{in Symplectic geometry, groupoids, and integrable systems (Berkeley, CA, 1989), 227--246,
\emph{Math. Sci. Res. Inst. Publ.} \textbf{20}, Springer, New York, 1991.}


\bibitem{Newlander:1957}  A., Newlander, L., Nirenberg,
\newblock{Complex Analytic Coordinates in Almost Complex Manifolds,}
\newblock{\emph{Ann. Math.} \textbf{65} (1957), 391--404.}

\bibitem{Ornea:2007} L.~Ornea, M.~Verbitsky,
\newblock{Sasakian structures on CR-manifolds,}
\newblock{\emph{Geom. Dedicata} \textbf{125} (2007), 159--173.}

\bibitem{Ornea:2024} L.~Ornea, M.~Verbitsky,
\newblock{Principles of locally conformally Kähler geometry,}
\newblock{\emph{Birkhäuser/Springer, Cham}, 2024.}

\bibitem{Sasaki:1961} S. Sasaki, Y. Hatakeyama,
\newblock{On differentiable manifolds with certain structures which are closely related to almost contact structure. II,}
\newblock{\emph{Tohoku Mathematical Journal, Second Series} \textbf{13} (1961), no. 2, 281--294}.

\bibitem{Sekiya:2015} K. Sekiya, 
\newblock{Generalized Almost Contact Structures and Generalized Sasakian Structures,} 
\newblock{\emph{Osaka J. Math.} \textbf{52} (2015), 43--59.}

\bibitem{Schnitzer:2023} J. Schnitzer, A. G. Tortorella,
\newblock{Weak dual pairs in Dirac–Jacobi geometry}
\newblock{\emph{Communications in Contemporary Mathematics} \textbf{25}, no. 08 (2023), 2250035.}

\bibitem{Tanno:1989} S. Tanno,
\newblock{Variational Problems on Contact Riemannian Manifolds,}
\newblock{\emph{Trans. Amer. Math. Soc.} \textbf{314} (1989), 349--79.} 

\bibitem{Tashiro:1963} Y. Tashiro,
\newblock{On contact structure of hypersurfaces in complex manifolds I,}
\newblock{\emph{Tohoku Math. J.} \textbf{15} (1963), 62-78.}

\bibitem{Tortorella:2020} A.~G.~Tortorella, L.~Vitagliano, O.~Yudilevich,
\newblock{Homogeneous G-structures,}
\newblock{\emph{Ann. Mat. Pura Appl.} (4) \textbf{199} (2020), 2357--2380.}

\bibitem{Vaisman:1976} I. Vaisman,
\newblock{On locally conformal almost Kähler manifolds,}
{\emph{Israel J. Math.} \textbf{24} (1976), 338--351.}

\bibitem{Vaisman:1980} I. Vaisman,
\newblock{Conformal changes of almost contact metric structures,}
\newblock{In: Artzy, R., Vaisman, I. (eds) Geometry and Differential Geometry. Lecture Notes in Mathematics, vol 792. \emph{Springer}, Berlin, Heidelberg, 1980.}

\bibitem{Vitagliano:2018} L.~Vitagliano,
\newblock{Dirac-Jacobi bundles,}
\newblock{\emph{J. Symplectic Geom.} \textbf{16} (2018), 485--561.}

\bibitem{Vitagliano:2019} L.~Vitagliano,
\newblock{Products in Jacobi Geometry,}
\newblock{conference talk, Rio de Janeiro, 2019.}

\bibitem{Wang:1990} McKenzie Y.~Wang, W.~Ziller,
\newblock{Einstein metrics on principal torus bundles,}
\newblock{\emph{J. Differential Geom.} \textbf{31} (1990), 215--248.}

\bibitem{Zapata-Carratala:2020} C.~Zapata-Carratala,
\newblock{Jacobi Geometry and Hamiltonian Mechanics: the Unit-Free Approach,}
\newblock{\emph{Int. J. Geom. Methods Mod. Phys.} \textbf{17} (2020), 2030005.}


\end{thebibliography}
\end{document}